\documentclass[a4paper,11pt,american]{amsart}
\usepackage{mathrsfs}
\usepackage{dsfont}
\usepackage{mathtools}
\usepackage{amssymb}
\usepackage[foot]{amsaddr}
\usepackage{cite}
\usepackage{enumitem}
\usepackage{hyperref}
\usepackage{todonotes}
\usepackage{subfigure}
\usepackage{geometry}
\usepackage{colortbl}
\geometry{hmargin=3cm,vmargin=3cm}
\setlength{\abovedisplayshortskip}{0pt}
\setlength{\belowdisplayshortskip}{0pt}
\setlength{\abovedisplayskip}{0pt}

\def\R{\mathbb R}
\def\N{\mathbb N}

\def\y{\mathbf y}
\def\ep{\varepsilon}
\def\ds{\displaystyle}
\def\tb{\overline{\theta}}
\def\tmin{\theta_{\textup{min}}}
\def\tmax{\theta_{\textup{max}}}
\def\ub{\overline{u}}
\def\rhob{\overline{\rho}}

\def\lp{\left(}
\def\rp{\right)}

\definecolor{Lionel}{rgb}{0.13, 0.68, 0.8}

\newtheorem{thm}{\textbf{Theorem}}[section]
\newtheorem{lem}[thm]{\textbf{Lemma}}
\newtheorem{prop}[thm]{\textbf{Proposition}}

\theoremstyle{remark}
\newtheorem{rem}[thm]{Remark}

\numberwithin{equation}{section}

\title[Allee threshold as phenotypic trait: persistence vs. extinction]{When the Allee threshold is an evolutionary trait: persistence vs. extinction}
\author{Matthieu Alfaro}
\address[M. A.]{Universit\'e de Rouen Normandie, CNRS, Laboratoire de Math\'ematiques Rapha\"el Salem, Saint-Etienne-du-Rouvray, France \& BioSP, INRAE, 84914, Avignon, France.}
\email{matthieu.alfaro@univ-rouen.fr}
\author{L\'{e}o Girardin}
\address[L. G.]{Univ Lyon, CNRS, Universit\'e Claude Bernard Lyon 1, UMR 5208, Institut Camille Jordan, F-69622 Villeurbanne, France}
\email{leo.girardin@math.cnrs.fr}
\thanks{This work was supported by a public grant as part of Investissement d'Avenir projects, refe\-rences ANR-11-LABX-0056-LMH, LabEx LMH, and ANR-11-IDEX-0001-02, A*MIDEX. This work has been carried out in the framework of
the NONLOCAL (ANR-14-CE25-0013) and RESISTE (ANR-18-CE45-0019) projects funded by the French National Research Agency
(ANR). This work was also supported by the 
ANR \textsc{i-site muse}, project \textsc{michel} 170544IA (n$^{\circ}$ ANR \textsc{idex}-0006).}
\author{Fran\c cois Hamel}
\address[F. H.]{Aix Marseille Univ, CNRS, Centrale Marseille, I2M, Marseille, France.}
\email{francois.hamel@univ-amu.fr}
\author{Lionel Roques}
\address[L. R.]{BioSP, INRAE, 84914, Avignon, France.}
\email{lionel.roques@inrae.fr}

\begin{document}

\begin{abstract}
We consider a nonlocal parabolic equation describing the dynamics of a popu\-lation structured by a spatial position and a phenotypic trait, submitted to dispersion, mutations and growth. The growth term may be of the Fisher-KPP type but may also be subject to an Allee effect which can be weak (non-KPP monostable nonlinearity, possibly degenerate) or strong (bistable nonlinearity). The type of growth depends on the value of a variable $\theta:$ the Allee threshold, which is considered here as an evolutionary trait. After proving the well-posedness of the Cauchy problem, we study the long time behavior of the solutions. Due to the richness of the model and the interplay between the various phenomena and the nonlocality of the growth term, the outcomes (extinction vs. persistence) are various and in sharp contrast with earlier results of the existing literature on local reaction-diffusion equations.   
\end{abstract}

\keywords{Allee effect, evolutionary rescue, reaction-diffusion, structured population}
\subjclass[2010]{35K57, 35R09, 92D15, 92D25}
\maketitle

\tableofcontents

\section{Introduction}\label{sec1}

\subsection{Position of the problem}
We consider a population density $u=u(t,x,\theta)$, which depends on time $t\ge0$ and is structured by a spatial position $x\in \R$ and a phenotypic trait
$$\theta\in \Theta:=(\tmin,\tmax),$$
where
$$-\infty<\tmin<\tmax<1,$$
and whose evolution is governed by the nonlocal problem
\begin{equation}
\label{eq}
u_t=du_{xx}+\alpha u_{\theta \theta}+u(\rho-\theta)(1-\rho) \quad\text{for all } t>0,\ x\in \R,\ \theta\in\Theta.
\end{equation}
In~\eqref{eq}, $d>0$ is the spatial diffusion coefficient, $\alpha> 0$ is a coefficient which measures the impact of mutations on the trait (mutation rate $\times$ mutation effects, see Appendix~A in \cite{HamLav20}). The nonlocal term $\rho=\rho(t,x)$ corresponds to the total population density at spatial position $x$; it depends on the solution $u=u(t,x,\theta)$ itself and is given by
\begin{equation*}
\label{nonlocal}
\rho(t,x):=\int_{\Theta} u(t,x,\theta)\,\textup{d}\theta\quad\text{for all }t>0,\ x\in\R.
\end{equation*}
To ensure that mutations have no effect on the population size, the equation~\eqref{eq} is supplemented with no-flux boundary conditions on the boundary of the interval $\Theta$ of phenotypic traits, namely
\begin{equation}
\label{neumann}
u_\theta(t,x,\tmin)=u_\theta(t,x,\tmax)=0\quad\text{for all }t>0,\ x\in \R.
\end{equation}

Our first main concern is to perform a detailed analysis of the Cauchy problem obtained by supplementing to \eqref{eq}--\eqref{neumann} an initial condition
\begin{equation}
\label{initial}
u(0,\cdot,\cdot)=u_0\in\mathcal{C}_c(\R\times\overline{\Theta}, [0,+\infty))\text{ with } M:=\sup_{x\in\R}\int_{\Theta}u_0(x,\theta)\,\textup{d}\theta>0.
\end{equation}
In~\eqref{initial}, $\mathcal{C}_c$ denotes the space of continuous functions with compact support (hence, $M$ is necessarily finite). The initial conditions are here assumed moreover to be nonnegative. Secondly, we will investigate the long time dynamics (persistence vs. extinction) of the population density $u$ and its mass $\rho$, according to the value of the parameters~$\alpha$,~$\tmin$ and~$\tmax$, and according to the initial condition $u_0$.

\subsection{Biological context}

A biological invasion is generally considered as a three-stage process~\cite{BlaPys11}, which begins with the introduction of some individuals into a new environment, and is followed by the establishment and spreading of the population. This scenario corresponds to a successful invasion. However, individuals that arrive from a source popu\-lation into a new environment do not necessarily establish a new population~\cite{BlaPys11}. Either the newly-introduced individuals are well-adapted to the new environment and can readily establish or the introduced population declines due to a negative growth rate. In the latter case, evolutionary adaptation can lead to the establishment of such initially declining populations. This process is known as {\it evolutionary rescue} \cite{ColAle17}.

The success of an invasion depends on several factors that include characteristics of the species, of the introduction event (initial population size and spatial distribution \cite{GarRoqHam12} and genotype diversity), and of the new environment (e.g., climate matching or presence of hosts for biological invasions). Among these factors, several studies have shown that the presence of an Allee effect, a decreased individual fitness at low population density \cite{All38},  plays an important role, as introductions typically involve small populations~\cite{Dra04,LeuDra04,YamLie09}. The Allee effect may result from several simultaneous processes \cite{BerAng07} that arise at low densities, such as diminished chances of finding mates or inbreeding  \cite{CouBer08}.

Many spatio-temporal modelling approaches that focused on invasion success in the presence of an Allee effect adopted a purely demographic viewpoint \cite{LewKar93,GarRoqHam12,DruDra07},  thereby neglecting the effects of genetic adaptation. These studies were based on PDE reaction-diffusion models with growth functions of the form $f(\rho)= \rho(\rho-\theta_0)(1-\rho)$, with $\theta_0<1$ a given constant representing the Allee threshold \cite{SteSut99}. We recall that this function may account for the presence of a strong Allee effect if $\theta_0>0$, a weak Allee effect if $\theta_0 \in (-1,0]$ and can also take a KPP form without Allee effect if $\theta_0\le -1$ \cite{Tur98} (see also Table~\ref{table:summary_standard}).

Here, we take a different approach to analyze the success of an invasion in the presence of genetic adaptation when the trait under selection precisely corresponds to the strength~$\theta$ of the Allee effect. In the source population, where the introduced individuals come from, Allee effects may have been promoted by evolution, due to high population sizes \cite{BerKra17}. In such dense populations, where mate finding is easy, individual may indeed acquire traits that impair fitness at lower densities. Conversely, the selection pressure at low density can promote traits that reduce the strength of the Allee effect. More generally, this type of density-dependent selection \cite{Rou71,Asm83} can occur when the fitness associated to a trait value depends on the population density. Empirical examples include dispersal dimorphism in several insect species, where individuals with higher dispersal potential (and thus better mate-finding ability) mostly appear at low population densities \cite{ZerDen97}. The invasion of cane toads in Australia is another well-documented example of this dimorphism \cite{KelShi20}. Yet another example of density-dependent selection is the evolution of virulence in spreading epidemics \cite{Griette_Raoul_}. Recently, such biological problems and the underlying trade-offs have attracted a great deal of interest from mathematical modelers \cite{Ben-Cal-Meu-Voi-12,Elliott_Cornel,Griette_Raoul_,Keenan_Cornell_2020}. 

As the leading edge of an invasion is by definition a region where the population density is low, one may expect that important selection pressure on traits that regulate the Allee effect occurs there. The strength of the Allee effect is known to have important effects on the persistence/extinction and the spread of invasive organisms, we therefore expect that the evolution of these traits have important effect on invasion dynamics. Yet, this question has not been addressed theoretically until the recent work \cite{EP-20}, which is based on individual-based simulations of a model with a trait that governs resistance to the Allee effect. In \cite{EP-20}, the authors focused on the transition during the course of an invasion from \textit{pushed waves}, namely steep waves where the whole bulk of the wave pushes the invasion forward and the spreading speed is nonlinearly determined, to \textit{pulled waves}, namely flatter waves that are driven only by the exponential tail ahead of the front and whose spreading speed is linearly determined. Here, we rather focus on the conditions that lead to a successful invasion or not.

\subsection{Mathematical context}

In the last decade, mathematical population models structured by both a spatial and a trait variable and evolving in an unbounded spatial domain have received much attention. As far as Fisher-KPP growth terms (the per capita growth rate is decreasing with respect to the population density) are concerned, let us mention the works~\cite{Alfaro_Coville_Raoul,Berestycki_Jin_Silvestre,Bouin_Mirrahimi,Griette_2017} dealing with constant motility, \cite{Arnold_Desvill,Berestycki_Mouhot_Raoul,Bouin_Calvez_2014,Bou-Cal-Meu-Mir-Per-Rao-12,Bouin_Chan_Hen,Bouin_Henderso,BHR-17,Cal_Hen_Mir_Tur_Dum,Girardin_Griette_2020,Turanova_2015} where the motility is trait-dependent (cane toad equation) and \cite{Alfaro_Griette,Girardin_2016_2,Girardin_2016_2_add,Girardin_2017,Girardin_2018,Girardin_Griette_2020,Griette_Raoul,Morris_Borger_Crooks} where the trait structure is discrete. As far as bistable growth terms are concerned, let us mention~\cite{Bouin_Henderson} where a local bistable-type equation is concerned, and~\cite{AD-18} considering a nonlocal model proposed in~\cite{KW-10} for evolutionary rescue.

The model \eqref{eq} is not only space-trait structured but also includes a trait-dependent Allee effect in the growth term (possibly threatening small populations). Moreover, due to the nonlocality of the growth term and its non-monotonicity with respect to $u$ (remember that $\rho$ depends on $u$), the comparison principle does not hold in general for~\eqref{eq}--\eqref{initial}, that is, even if two initial conditions $u_0$ and $v_0$ are ordered, the solutions emanating from them may not be ordered at positive times.

As mentioned above, the main purpose of this work is to determine conditions that imply persistence or extinction of a population whose density is governed by \eqref{eq}--\eqref{initial}. Since the Allee threshold, or the strength of the Allee effect, is regarded as an evolutionary trait subject to mutations and selection, the model under consideration may share some similarities with various classical local reaction-diffusion equations such as Fisher-KPP, degenerate monostable, or bistable equations. As a result, the model can reveal many phenomena which are common in the study of local reaction-diffusion equations. Let us first mention the so-called \textit{hair trigger effect}~\cite{AW-78}, meaning that persistence occurs whatever the size of the initial density. Notice that the hair trigger effect is related to the seminal  blow-up result of Fujita \cite{Fuj-66}. On the other hand, some \textit{threshold} phenomena \cite{ADF-19, Du-Mat-10, P-11, Zla06} may occur, meaning that \lq\lq small'' populations typically go extinct whereas \lq\lq large'' populations typically persist. These classical results are summarized in Table~\ref{table:summary_standard} for the standard model $\rho_t=d \rho_{xx}+\rho(\rho-\theta_0)(1-\rho)$, where~$\theta_0$ is a fixed parameter that controls the occurrence of an Allee effect, see above. For the model \eqref{eq}, we expect a much more complicated behavior. We distinguish between three possible scenarios: hair trigger effect, possible persistence or extinction depending on the initial condition, and systematic extinction whatever the (compactly supported) initial condition. As we will see, the range~$(\tmin,\tmax)$ over which the trait may vary plays a critical role on the fate of the population.

\renewcommand{\arraystretch}{1.5}
{\Small\begin{table}
\noindent\hspace{-.5cm}\begin{tabular}{c|c|c|c|c|c|}
    \cline{2-6}
    & $\theta_0 \le -1$ & $\theta_0 \in (-1, -1/2]$ & $\theta_0\in (-1/2,0]$  & $\theta_0 \in (0,1/2)$ & $\theta_0\in[1/2,1)$ \\
    \hline
    \multicolumn{1}{|c|}{Outcome} & P. & P. & P.  &  E. or P. & E. \\
    \hline
    \multicolumn{1}{|c|}{Strength of the A. E.} & No A. E. & Weak A. E. & 
Weak A. E. & Strong A. E. & Strong A. E. \\
    \hline
    \multicolumn{1}{|c|}{Nature of the front} & Pulled & Pulled  & Pushed 
  &  Pushed  & Pushed \\
    \hline 
    \multicolumn{1}{|c|}{Spreading speed} & $2\sqrt{-\theta_0 d}$ & $2\sqrt{-\theta_0 d}$ & $\sqrt{2d} (1/2-\theta_0)$ & $\sqrt{2d} (1/2-\theta_0)$ & 0 \\
    \hline
\end{tabular}
\vskip 0.3cm
\caption{Standard persistence and spreading results for the equation $\rho_t=d \rho_{xx}+\rho(\rho-\theta_0)(1-\rho)$ with compactly supported initial condition $\rho_0\ge,\not\equiv0$ (here, $\theta_0<1$ is a fixed parameter). P.: systematic persistence independently of $\rho_0$ (hair trigger effect); E.~or~P.: outcome depending on $\rho_0$; E.: systematic extinction independently of $\rho_0$; A. E.: Allee effect. The front refers to the unique front or to the front with minimal speed. Its pulled/pushed nature is understood in the sense of \cite{Gar-Gil-Ham-Roq-12}.}
\label{table:summary_standard}
\end{table}
}

If survival occurs, one may like to analyze the propagation phenomena, in particular to determine the \textit{spreading speed} which is related to the nature  of the traveling front \cite{Gar-Gil-Ham-Roq-12}. We believe that the model may exhibit fronts that, in some sense, may switch from \textit{pushed} to \textit{pulled}, as observed through individual-based models \cite{EP-20}. We plan to address such an issue in a future work.

\subsection{Summary of the main results} 

We here briefly comment our main results, which will be clarified throughout the paper.

We start by proving important {\it a priori} estimates and the well-posedness of the Cauchy problem \eqref{eq}--\eqref{initial}. In particular the solutions $u$ of~\eqref{eq}--\eqref{initial} are understood in the classical sense, namely of class $\mathcal{C}^{1;2}_{t;(x,\theta)}((0,+\infty)\times\R\times\overline{\Theta})\cap\mathcal{C}([0,+\infty)\times\R\times\overline{\Theta})$ (and therefore~\eqref{eq} is satisfied in $(0,+\infty)\times\R\times\overline{\Theta}$). The mass $\rho$ over the trait space will then be of class~$\mathcal{C}^{1;2}_{t;x}((0,+\infty)\times\R)\cap\mathcal{C}([0,+\infty)\times\R)$.

Then our main goal is to figure out the long time behavior of the solutions. We first define the Neumann principal eigenpair $(\lambda_\alpha,\varphi_\alpha)$ of the linearized operator around the trivial steady state $0$ corresponding to perturbations that vary only in the $\theta$ variable, namely
\begin{equation*}
\begin{cases}
-\alpha \varphi_\alpha ''+\theta\varphi_\alpha =\lambda_\alpha \varphi_\alpha & \text{ in } \overline{\Theta},\vspace{3pt}\\
\varphi_\alpha'(\tmin)=\varphi_\alpha'(\tmax)=0,\vspace{3pt}\\
\varphi_\alpha>0& \text{ in }\overline{\Theta}.\vspace{3pt}
\end{cases}
\end{equation*}
We prove that the outcome of the population, extinction or persistence, depends on a subtle combination of the sign of $\lambda_\alpha$, the range of admissible phenotypic traits $\Theta=(\tmin,\tmax)$, and the initial density $u_0$, as summarized in Table~\ref{table:summary}. By extinction~(E.), we mean that $\|u(t,\cdot,\cdot)\|_{L^\infty(\R\times\Theta)}\to0$ as $t\to+\infty$. By persistence (P.), we mean the opposite, that is, $\limsup_{t\to+\infty}\|u(t,\cdot,\cdot)\|_{L^\infty(\R\times\Theta)}>0$. We will also see that these definitions have equivalent formulations for the mass $\rho$. 

\renewcommand{\arraystretch}{1.5}
{\Small\begin{table}
\noindent\hspace{-1.5cm}\begin{tabular}{c|c|c|c|c|c|c|}
    \cline{2-7}
     & $\tmin <0$ and & $\tmin< 0$ and & $0\!\leq\!\tmin\!<\!1/2$ and & $0\!\le\!\tmin\!<\!1/2$ and & $0\!\le\!\tmin\!<\!1/2$ and & $\tmin\geq 1/2$ \\
     & $\tmin\!+\!\tmax\!\le\!0$ & $\tmin\!+\!\tmax\!>\!0$ & $\tmin+\tmax<1$ & $\tmin+\tmax=1$ & $\tmin+\tmax>1$ & \\
    \hline
    \multicolumn{1}{|c|}{$\lambda_\alpha<0$} & P.  & P. & N/A & N/A & N/A & N/A \\
    \hline
    \multicolumn{1}{|c|}{$\lambda_\alpha=0$} & N/A & P. & N/A & N/A & N/A & N/A \\
    \hline
    \multicolumn{1}{|c|}{$\lambda_\alpha>0$} & N/A & E. or P. &  E. or P. &   \cellcolor{yellow} E. or P.  & \begin{tabular}{@{}l@{}}
                   $\exists\,\alpha^\star>0$,\\
                   \cellcolor{yellow}E. or P. if $\alpha\le\alpha^\star$\\
                   E. if $\alpha>\alpha^\star$\\
                 \end{tabular}   & E. \\
    \hline
\end{tabular}
\vskip 0.3cm
\caption{Summary of the main results. E.: systematic extinction independently of $u_0$ satisfying~\eqref{initial}; P.: systematic persistence independently of $u_0$ satisfying~\eqref{initial}; E.~or~P.: outcome depending on $u_0$; colored cells: the possibility of extinction is proved, the possibility of persistence is conjectured but not proved; N/A: not applicable, \textit{i.e.} impossible case.
}
\label{table:summary}
\end{table}
}

As will be seen in Section~\ref{sec:ev}, the map $\alpha\mapsto\lambda_\alpha$ is an increasing concave bijection from~$(0,+\infty)$ onto the open interval $(\tmin,(\tmin+\tmax)/2)$. Therefore, the nonpositivity of $\lambda_\alpha$ implies that $\tmin<0$, and the last four boxes of lines~2 and~3 of Table~\ref{table:summary} are impossible. Similarly, the nonpositivity of $\tmin+\tmax$ yields $\lambda_\alpha<0$, hence two boxes of column~2 of Table~\ref{table:summary} are impossible.

Let us observe from Table~\ref{table:summary} that \eqref{eq}--\eqref{initial} may behave like, at least, five different classical reaction--diffusion equations.

\begin{enumerate}
\item When $\lambda_\alpha <0$ (line 2 of Table~\ref{table:summary}), then $\tmin<0$ and the zero steady state is linearly unstable. We are then facing a non-degenerate monostable\footnote{By a \lq\lq non-degenerate monostable situation'' we mean that the equation behaves like a standard  reaction-diffusion  equation with a monostable nonlinearity $f$ with non-zero slope at zero, a typical example being $f(\rho)=\rho(\rho-\theta_0)(1-\rho)$ with $\theta_0<0$.} situation: persistence occurs whatever the size of the initial density (hair trigger effect). This is proved in Theorem \ref{th:rescue}.
\item Whereas the critical case $\lambda _\alpha =0$ leads to extinction in classical Fisher-KPP equations (see e.g.~\cite{Berestycki_Ham_1} for such results in a periodic framework), \eqref{eq}--\eqref{initial} still enjoys the hair trigger effect when $\lambda _\alpha =0$ (line 3 of Table~\ref{table:summary} where, necessarily, $\tmin<0<\tmin+\tmax$). The reason is that~\eqref{eq} then ``escapes'' from the non-degenerate regime and ``switches'' to a (slightly) degenerate monostable situation, for which the hair trigger effect still holds.\footnote{By a \lq\lq (slightly) degenerate monostable situation, for which the hair trigger effect still holds'' we mean a reaction-diffusion equation with a monostable nonlinearity $f$ satisfying $f(\rho) \sim r \rho^{1+p}$ as $\rho\to 0$, for some $r>0$ and $1<1+p\leq 3$. A typical example is $f(\rho)=\rho(\rho-\theta_0)(1-\rho)$ with $\theta_0=0$.} This is also proved in Theorem \ref{th:rescue}.
\item When $\lambda_\alpha>0$ and the center $(\tmin+\tmax)/2$ of the interval $\Theta$ is smaller than, or equal to,~$1/2$ (columns 3, 4 and 5), we are typically facing a bistable situation, for which the outcome may be the extinction or the persistence of the population according to the initial density, as for local bistable reaction-diffusion equations admitting a traveling front invading the trivial state $0$.  The possibility of extinction is proved in Theorem \ref{th:ext2}. The possibility of persistence is proved in Theorem \ref{thm:poss_persis_subcrit} when $\tmin+\tmax<1$, while it is conjectured in the critical case $\tmin+\tmax=1$ (see subsection~\ref{sec:conj} below).
\item When $\lambda_\alpha >0$ and the center $(\tmin+\tmax)/2$ of the interval $\Theta$ is larger than~$1/2$ while $\tmin<1/2$ (column 6), we are typically facing a situation similar to that of local bistable reaction-diffusion equations admitting a traveling front that, according to the amplitude of $\alpha$, is retracting (meaning that the null state invades the nontrivial state), standing (meaning that connection from the null state to the nontrivial state is stationary), or possibly invading (meaning that the nontrivial state invades the null state). The possibility of extinction is proved in Theorem \ref{th:ext2}. The systematic extinction for $\alpha>\alpha^\star$ is proved in Theorem \ref{thm:poss_persis_supercrit}, while the possibility of persistence is conjectured when $\alpha \leq \alpha^\star$ (see subsection~\ref{sec:conj} below).
\item When $\lambda_\alpha>0$ and $\tmin\geq1/2$ (column 7), we are typically facing a bistable situation for which all solutions go extinct, as for local bistable reaction-diffusion equations admitting a standing or retracting front. This is proved in Theorem \ref{thm:syst-ext}.
\end{enumerate}

Our work effectively shows the possibility of evolutionary rescue in this model: an initial condition that would be, for instance, concentrated around $\theta_0>1/2$ should lead to extinction in the absence of mutations but might persist in the presence of mutations. As a matter of fact, it will automatically persist if, for instance, $\tmin+\tmax\leq 0$. However, we also observe that in the whole Table~\ref{table:summary}, the higher the mutation rate $\alpha$ is, the higher the chances of extinction are. This phenomenon is known as ``lethal mutagenesis" \cite{BulSan07}. There is therefore an interesting trade-off: evolutionary rescue is made possible by the presence of mutations but is made difficult by large mutation rates, which is consistent with the findings of \cite{AncLam19}. 

From a mathematical point of view, this paper is one of the first to provide rigorous results on reaction-diffusion equations taking into account diffusion in the spatial and the trait variables together with selection with respect to the trait and a nonlocal effect in the reaction. The fact that the trait variable is the Allee threshold is new (to the best of our knowledge) and mathematically challenging. The proofs include tools from nonlinear analysis, eigenvalue problems, variational arguments, pointwise comparison principles, and integral estimates.

\subsection{The \lq\lq E. or P. conjecture''}\label{sec:conj}

When $0\leq \tmin<1/2$ and $\tmin+\tmax\geq 1$, the situation is very intricate and seems to depend dramatically on the coefficient $\alpha$. When~$\alpha$ is above some threshold $\alpha^\star$, we prove systematic extinction in the case $\tmin+\tmax>1$. We also prove in Section~\ref{s:ext-pers} that extinction is always a possible outcome in the case $0\leq \tmin<1/2$ and $\tmin+\tmax\geq 1$ for some initial conditions. However, the possibility of defining $\alpha^\star$ as a ``sharp threshold" perfectly separating the ``E. or P.'' behavior (both extinction and persistence are possible depending on the initial condition)  when $\alpha\le \alpha^\star$ and the  ``E.'' behavior when $\alpha> \alpha^\star$ is not proved. Namely, for decreasing values of $\alpha$, the outcome could alternate between  ``E. or P.'' and ``E''. Nevertheless, in view of our numerical simulations (see Section~\ref{sec:numerics}), we conjecture that there exists sharp threshold $\alpha^\star$ and that in each one of the two yellow cells in Table~\ref{table:summary}, the outcome is ``E. or P.''. 
However, the possibility of persistence in these two cases remains an open question.

This means that:
\begin{enumerate}
\item we expect the critical case $\tmin+\tmax=1$ to be exactly similar to the sub-critical case $\tmin+\tmax<1$, $\tmin\geq 0$;
\item we expect that in the super-critical case $\tmin+\tmax>1$, $0\leq\tmin<1/2$, one can chose $\alpha^\star$ such that persistence is possible if and only if the mutation rate $\alpha$ is smaller than or equal to $\alpha^\star$. More precisely, we expect that when $\tmin+\tmax>1$, $0\leq\tmin<1/2$ and $\alpha\le\alpha^\star$, 
\textit{some} populations concentrate around $\tmin<1/2$ and by doing so escape extinction.
\end{enumerate}


\subsection{Generalizations of the model \eqref{eq} and open questions}

\noindent \textit{Spreading properties and traveling waves.} As mentioned above, we plan to analyze the spreading properties of the solutions of~\eqref{eq} in a future work. We may look for positive traveling waves of the form $u(t,x,\theta)=U(x-c\,t,\theta)$. This would also lead to traveling fronts for the total population  $\rho(t,x)=\int_{\Theta} U(x-c\,t ,\theta)\,\textup{d}\theta.$ Note that, necessarily, there would be a function $\omega$ such that the mean trait would satisfy $\tb(t,x)=\omega(x-c\,t)$. Thus, the equation satisfied by $\rho$ would take the general form:
\begin{equation}\label{eq:rho_bis}
\rho_t=d\rho _{xx}+f(\rho,x- c\,t).
\end{equation}
Several studies have investigated this type of equation when the function $f$ is known \cite{BerDie09,BerFan18,BouNad15,BouGil19,Ham97aa,Ham97bb}. These results cannot of course be applied as such, since here the function $f$ itself is not known, as it depends on $u$. Besides traditional existence and uniqueness results, one may study the asymptotic behavior of the traveling fronts at $\pm \infty$, the limit of the mean fitness at $\pm \infty$, the monotonicity of $U$ and of $\omega$  and the pulled nature (linear minimal speed $c^*=2\sqrt{-d\, \lambda_\alpha}$) or pushed nature (nonlinear minimal speed $c^*>2\sqrt{-d\, \lambda_\alpha}$) of the waves depending on the parameter values.

\noindent \textit{Evolutionary trade-offs.} With the model \eqref{eq} with $\tmax<1$, having higher values of $\theta$ is always disadvantageous. Not only the growth term becomes negative at low densities, but even the maximum per capita growth rate $\max_{\rho >0}(\rho-\theta)(1-\rho)=(1-\theta)^2/4$ is decreased. It seems however natural to consider that the need for cooperation between the individuals which is taken into account when $\theta$ is increased would also lead to a higher per capita growth rate. In other terms, there would be a trade-off between the trait $\theta$ and the maximum per capita growth rate. We propose the following extension of  \eqref{eq}: the population density $u=u(t,x,\y)$, is structured by an abstract phenotypic trait $\y\in\Omega \subset\R^k$ ($k\ge 1$), and satisfies:
\begin{equation}
\label{eq:extended}
u_t=d u_{xx}+\alpha \Delta_{\y} u +r(\y) \, u(\rho-\theta(\y))(1-\rho) \quad\text{for all } t>0,\ x\in \R,\ \y\in\Omega.
\end{equation}
Here, the Allee threshold depends on the phenotype via a function $\y\mapsto\theta(\y) \in (\tmin,\tmax)$. The trade-off is taken into account by assuming that the function $\tmax-\theta(\y)$ and the maximum per capita growth rate $r(\y) \, (1-\theta(\y))^2/4$ reach their maximum at different positions $\y_1$ and $\y_2$ in $\Omega$. With phenotypes around $\y_1$, the Allee threshold is low (cooperation between the individuals is not required) and the maximum per capita growth rate is also low. With phenotypes around $\y_2$, the Allee threshold is high (cooperation is  required), but also leads to a higher maximum per capita growth rate. With this approach, we conjecture that the population does not necessarily concentrate on trait values such that $\theta(\y)\approx \tmin$.

\subsection{Organization of the paper}

We start with some {\it a priori} estimates and the well-posedness of the Cauchy problem \eqref{eq}--\eqref{initial} in Section~\ref{s:preliminaries}. In Section~\ref{s:ext-pers} we prove all the extinction and persistence results of Table~\ref{table:summary} by combining Theorems~\ref{thm:syst-ext},~\ref{th:rescue},~\ref{th:ext2},~\ref{thm:poss_persis_subcrit}, and~\ref{thm:poss_persis_supercrit}. Lastly, in Section~\ref{sec:numerics} we present some numerical results supporting the aforementioned E.~or~P.~conjecture.

\section{Preliminaries}\label{s:preliminaries}

This section is devoted to the analysis of the Cauchy problem~\eqref{eq}--\eqref{initial}. Before doing so in Section~\ref{sec22}, we first derive in Section~\ref{sec21} some \textit{a priori} estimates and bounds for any classical solution of~\eqref{eq}--\eqref{initial}. 

\subsection{Global bounds and comparison between the population density $u$ and its mass $\rho$}\label{sec21}

In this section, we consider a classical solution $u\in\mathcal{C}^{1;2}_{t;(x,\theta)}((0,T^*)\times\R\times\overline{\Theta})\cap\mathcal{C}([0,T^*)\times\R\times\overline{\Theta})$ of~\eqref{eq}--\eqref{initial} in some time interval $[0,T^*)$ with $0<T^*\le +\infty$. We also assume that $u$ is locally bounded in time, that is, $u$ is bounded in $[0,T]\times\R\times\overline{\Theta}$ for every $T\in(0,T^*)$. The mass $\rho$ is then of class $\mathcal{C}^{1;2}_{t;x}((0,T^*)\times\R)\cap\mathcal{C}([0,T^*)\times\R)$ and it is locally bounded in time.

Let us first begin with the positivity of the population density $u$ and its mass $\rho$. For any $T\in(0,T^*)$, considering temporarily $\rho$ as a fixed function in $L^\infty([0,T]\times\R)$ and denoting $A(t,x,\theta)=(\rho(t,x)-\theta)(1-\rho(t,x))$, we find that the solution $u$ satisfies the equation~$u_t-d u_{xx} - \alpha u_{\theta\theta}=Au$ in $(0,T]\times\R\times\overline{\Theta}$, which is a local and linear parabolic equation with bounded space-time heterogeneous coefficients. Since $\underline{u}=0$ is a solution of this equation and since $u_0\geq\underline{u}$ with $u_0\not\equiv\underline{u}$ in $\R\times\overline{\Theta}$, we deduce from the parabolic maximum principle and Hopf lemma that
$$u(t,x,\theta)>0\ \hbox{ for all $(t,x,\theta)\in(0,T]\times\R\times\overline{\Theta}$}.$$
This implies in turn that $\rho(t,x)=\int_\Theta u(t,x,\theta)\,\textup{d}\theta>0$ for all $(t,x)\in(0,T]\times\R$. Finally, as $T$ is arbitrary in $(0,T^*)$, one gets that
$$u>0\hbox{ in }(0,T^*)\times\R\times\overline{\Theta},\ \hbox{ and }\ \rho>0\hbox{ in }(0,T^*)\times\R.$$

From the positivity of $u$ and $\rho$, we then easily derive the global boundedness of the mass $\rho$. To do so, we integrate equation \eqref{eq} over $\theta\in(\tmin,\tmax)$ and, using the no-flux boundary conditions~\eqref{neumann}, we reach
$$\rho_t=d\rho_{xx}+\left(\rho ^2-\int_{\Theta} \theta\, u(t,x,\theta)\,\textup{d}\theta\right)(1-\rho)\ \hbox{ for all $0<t<T^*$ and $x\in\R$}.$$
The previous equation can be rewritten as
\begin{equation}\label{eq-rho}
\rho_t=d\rho _{xx}+\rho\left(\rho-\tb\right)(1-\rho)\ \hbox{ for all $0<t<T^*$ and $x\in\R$},
\end{equation}
where
$$\tb(t,x):=\frac 1{\rho(t,x)}\int_{\Theta} \theta\,u(t,x,\theta)\,\textup{d}\theta$$
represents the mean trait at time $t\in(0,T^*)$ and spatial position $x\in \R$ (remember that~$\rho$ is pointwise positive in $(0,T^*)\times\R$). Since $u$ is pointwise positive, one has
\begin{equation}\label{tminmax}
\tmin<\tb(t,x)<\tmax<1\ \hbox{ for all  $t\in(0,T^*)$ and $x\in\R$}.
\end{equation}
Hence, together with~\eqref{initial}, the continuity of $\rho$ in $[0,T^*)\times\R$, and the comparison principle applied to~\eqref{eq-rho}, it follows that
\begin{equation} \label{eq:max_rho}
\sup_{(t,x)\in[0,T^*)\times\R}\rho(t,x)\leq \max (M,1).
\end{equation}

As a immediate consequence of~\eqref{eq:max_rho} and the positivity of $\rho$, the nonlinear term in~\eqref{eq} satisfies
\begin{equation}\label{boundf}
\vert u(\rho-\theta)(1-\rho)\vert \leq C\,u\ \hbox{ in }[0,T^*)\times\R\times\overline{\Theta}
\end{equation}
for some constant $C>0$. The maximum principle then implies that
\begin{equation}\label{boundu}
\|u(t,\cdot,\cdot)\|_{L^\infty(\R\times\overline{\Theta})}\le \textup{e}^{Ct}\|u_0\|_{L^\infty(\R\times\overline{\Theta})}\ \hbox{ for all }t\in[0,T^*).
\end{equation}
In particular, the solution $u$ is bounded if $T^*<+\infty$. On the other hand, if~$T^*=+\infty$, since the function $u$, which is then positive in $(0,+\infty)\times\R\times\overline{\Theta}$, solves a linear reaction-diffusion equation of the form
$$u_t=du_{xx}+\alpha u_{\theta \theta} +\phi(t,x)u,$$
with $\vert \phi (t,x)\vert \leq C$ by~\eqref{boundf}, and with Neumann boundary conditions on $(0,+\infty)\times\R\times\partial\Theta$, it follows from the standard Harnack inequality~\cite{KrySaf80,Mos-64} that there exists a constant $C'>0$ such that
$$u(t+1,x,\theta)\ge C'u(t,x',\theta')\ \hbox{ for all $t\ge1$, $\theta,\theta'\in\overline{\Theta}$, and $|x-x'|\le1$}.$$
Integrating the above inequality over $\theta \in \Theta$ and using the global boundedness~\eqref{eq:max_rho} of~$\rho$ (which holds whether $T^*$ be finite or not), one infers that $u(t,x',\theta')\leq\max(M,1)/C'$, and thus~$u$ is globally bounded too if $T^*=+\infty$. To sum up, $u$ is bounded in~$[0,T^*)\times\R\times\overline{\Theta}$, whether~$T^*$ be finite or not.

From~\eqref{boundf}, we also infer the limit of $u$ and $\rho$ at spatial infinity. Indeed, from the inequality $u_t\leq d u_{xx}+\alpha u_{\theta \theta}+Cu$ in $(0,T^*)\times\R\times\overline{\Theta}$ and the comparison principle, it follows that the nonnegative solution $u(t,x,\theta)$ is dominated from above by the nonnegative solution $v=v(t,x)$ of $v_t=d v_{xx}+Cv$ with initial condition $v_0$ defined by~$v_0(x):=\max_{\theta \in \overline \Theta} u_0(x,\theta)$ for all $x\in\R$. Thus, as $v_0\in C_c(\R,[0,+\infty))$, one infers that
\begin{equation}
\label{zero-infini}
\lim_{x\to \pm\infty}u(t,x,\theta)=0,\  \text{uniformly in $\theta\!\in\!\overline{\Theta}$, and locally uniformly in $t\!\in\![0,T^*)$}.
\end{equation}
As a consequence, one also gets that $\rho(t,x)\to0$ as $x\to\pm\infty$, locally uniformly with respect to~$t\in[0,T^*)$.

From~\eqref{boundf}, we can also reproduce the argument in \cite[Section 2]{BHR-17} to compare the population density $u$ and its mass $\rho$. For the sake of completeness, we briefly recall this argument. The inequalities $-Cu\leq u_t-d u_{xx}-\alpha u_{\theta \theta}\leq Cu$ imply
\begin{equation*}
u^{+}_t -d u^{+}_{xx}-\alpha u^{+}_{\theta\theta}\ge0\quad\text{and}\quad u^{-}_t -d u^{-}_{xx}-\alpha u^{-}_{\theta\theta}\le0\quad\hbox{in }(0,T^*)\times\R\times\overline{\Theta},
\end{equation*}
where
$$u^{\pm}(t,x,\theta):=\textup{e}^{\pm Ct}u(t,x,\theta).$$
Then, on the one hand, denoting $w[t]=w[t](\tau,x,\theta)$ the solution of the heat equation~$w_\tau=d w_{xx}+\alpha w_{\theta\theta}$ in $(0,+\infty)\times\R\times\overline{\Theta}$ with no-flux boundary conditions on~$(0,+\infty)\times\R\times\partial\Theta$ and with initial condition~$w[t](0,\cdot,\cdot):=u(t,\cdot,\cdot)$, it follows from the comparison principle that for every~$0\le\tau\le t<T^*$, $x\in\R$ and $\theta\in\overline{\Theta}$,
\begin{equation*}
w[t-\tau](\tau,x,\theta)\,\textup{e}^{-C\tau}\leq u(t,x,\theta)\leq w[t-\tau](\tau,x,\theta)\,\textup{e}^{C\tau}.
\end{equation*}
On the other hand, from the global boundedness of $u$ in $[0,T^*)\times\R\times\overline{\Theta}$ and the local-in-time Harnack inequality proved in \cite[Theorem~1.2]{BHR-17}\footnote{The proof can be straightforwardly extended to the cylindrical domain $\R\times\Theta$ considered here with the Neumann boundary conditions on $\R\times\partial\Theta$, see also~\cite[footnote~1]{BHR-17}.}, we deduce that, for every $\tau\in(0,T^*)$ and $p>1$, there exists a constant $\widetilde{C}_{p,\tau}>0$ (which also depends on~$d$,~$\alpha$ and~$\|u\|_{L^\infty([0,T^*)\times\R\times\overline{\Theta})}$) such that
\begin{equation*}
\frac{\left( w[t-\tau](\tau,x',\theta') \right)^{p}}{\widetilde{C}_{p,\tau}^p}\leq w[t-\tau](\tau,x,\theta)\leq\widetilde{C}_{p,\tau} \left( w[t-\tau](\tau,x',\theta') \right)^{1/p}
\end{equation*}
for all $t\in[\tau,T^*)$, $\theta,\theta'\in\overline{\Theta}$ and $|x-x'|\le1$ (notice that the left and right inequalities in the above formula are actually equivalent since~$\theta$,~$\theta'$ are arbitrary in $\overline{\Theta}$ and $x$ and $x'$ are arbitrary real numbers such that $|x-x'|\le1$). We deduce that, for every $\tau\in(0,T^*)$ and $p>1$, there exists a constant $C_{p,\tau}>0$ such that
\begin{equation}
\label{ineg0}
\frac{u^p(t,x,\theta)}{C_{p,\tau}}\leq \rho(t,x)\leq C_{p,\tau}\,u^{1/p}(t,x,\theta)\quad\text{for all }(t,x,\theta)\in[\tau,T^*)\times\R\times\overline{\Theta}, 
\end{equation}
which leads to
\begin{equation}
\label{ineg}
\frac{\rho^p(t,x)}{C_{p,\tau}^p}\leq u(t,x,\theta)\leq C_{p,\tau}^{1/p}\rho^{1/p}(t,x)\quad\text{for all }(t,x,\theta)\in[\tau,T^*)\times\R\times\overline{\Theta}. 
\end{equation}

\begin{rem}The comparison \eqref{ineg0}, or \eqref{ineg}, between $u$ and $\rho$ is the main estimate of this subsection, and will be useful in several other parts of this paper, in particular in Section~\ref{s:ext-pers}.
\end{rem}

We finally derive an explicit upper bound for the mass $\rho$ at large time if $T^*=+\infty$. To do so, observe first that, whether $\rho(t,x)$ be smaller than $1$, equal to $1$, or larger than~$1$, one has
$$(\rho(t,x)-\overline{\theta}(t,x))\,(1-\rho(t,x))\le(1-\overline{\theta}(t,x))\,(1-\rho(t,x))\hbox{ for all $(t,x)\in(0,T^*)\times\R$}.$$
From~\eqref{eq-rho} and the positivity of $\rho$ in $(0,T^*)\times\R$, one gets that
\begin{equation*}
    \rho_t-d \rho_{xx}\leq\rho\,(1-\overline{\theta})\,(1-\rho)\quad\text{in }(0,T^*)\times\R.
\end{equation*}
One also recalls that $\tmin<\overline{\theta}(t,x)<\tmax<1$ for all $t\in(0,T^*)$ and $x\in\R$. Therefore,
$$\rho(t,x)\,(1-\overline{\theta}(t,x))\,(1-\rho(t,x))\le\left\{\begin{array}{ll}
\!\!(1-\tmin)\,\rho(t,x)\,(1-\rho(t,x)) & \!\!\hbox{if }0<\rho(t,x)\le1,\vspace{3pt}\\
\!\!(1-\tmax)\,\rho(t,x)\,(1-\rho(t,x)) & \!\!\hbox{if }\rho(t,x)>1.\end{array}\right.$$
If $T^*=+\infty$, by comparison with a classical Fisher--KPP type equation, it then follows that
\begin{equation}
    \label{eq:limsup-mass}
    \limsup_{t\to +\infty}\Big(\sup_{x\in\R}\rho(t,x)\Big)\leq 1 \ \hbox{ (if $T^*=+\infty$)},
\end{equation}
hence, together with~\eqref{ineg},
\begin{equation}
    \label{eq:limsup-density}
    \limsup_{t\to +\infty}\Big(\sup_{(x,\theta)\in\R\times\overline{\Theta}}u(t,x,\theta)\Big)\leq C_{p,\tau}^{1/p}\ \hbox{ (if $T^*=+\infty$)},
\end{equation}
for every $\tau>0$ and $p>1$.

\begin{rem}
Let us point out that similar upper bounds on $u$  could also be deduced from~\eqref{eq:max_rho},~\eqref{eq:limsup-mass} and~\cite[Proposition 2.3]{Turanova_2015}. In both cases though, the upper bound for $u$ depends on $d$ and $\alpha$.
\end{rem}

\subsection{The well-posedness of the Cauchy problem~\eqref{eq}--\eqref{initial}}\label{sec22}

Several arguments used in the forthcoming sections require a refined knowledge of the functional space which the solution $u$ belongs to. Therefore, as a mandatory preliminary, we study the well-posedness of the Cauchy problem \eqref{eq}--\eqref{initial}.

\begin{rem}
Hereafter, by a solution of~\eqref{eq}--\eqref{initial}, we mean a solution in Tikhonov's uniqueness class, that is, a solution $u\in\mathcal{C}^{1;2}_{t;(x,\theta)}((0,+\infty)\times\R\times\overline{\Theta})\cap\mathcal{C}([0,+\infty)\times\R\times\overline{\Theta})$ for which, for every $T>0$, there exists a constant $A_T>0$ such that $u(t,x,\theta)=o(e^{A_T|x|^2})$ as $x\to\pm\infty$, uniformly in $(t,\theta)\in[0,T]\times\overline{\Theta}$. Indeed, without any such restriction on the growth at infinity, solutions of reaction-diffusion Cauchy problems may not be unique, see, \textit{e.g.} \cite[Chapter 9]{Smo-book}.
\end{rem}

\begin{prop}[Well-posedness]\label{prop:well-posedness} Let
$$X=\big\{\varphi:\mathbb{R}\times\overline{\Theta}\to\R:\varphi\hbox{ is bounded and uniformly continuous in }\mathbb{R}\times\overline{\Theta}\big\}$$ 
endowed with the usual sup norm, denoted by $\|\ \|_X$. Then the Cauchy problem \eqref{eq}--\eqref{initial} admits a unique solution $u$ such that
\begin{equation*}
     t\mapsto u(t,\cdot,\cdot)\in \mathcal{C}\left([0,+\infty),X\right)\cap\mathcal{C}^1\left((0,+\infty),X\right).
\end{equation*}
Furthermore, $u>0$ in $(0,+\infty)\times\R\times\overline{\Theta}$.
\end{prop}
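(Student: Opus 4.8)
The plan is to recast \eqref{eq}--\eqref{initial} as an abstract semilinear Cauchy problem in $X$, solve it locally by a contraction argument, upgrade the resulting mild solution to a classical one, invoke the a priori estimates of Section~\ref{sec21} to obtain a global solution, and finally establish uniqueness within Tikhonov's class by a comparison argument. Writing $L:=d\,\partial_{xx}+\alpha\,\partial_{\theta\theta}$ for the realization on $X$ of the diffusion operator with homogeneous Neumann conditions on $\R\times\partial\Theta$, $L$ generates --- being the product of the one-dimensional heat semigroup in the $x$ variable and the Neumann heat semigroup in the $\theta$ variable --- a strongly continuous, analytic, positivity-preserving semigroup $(S(t))_{t\ge0}$ of contractions on $X$ with the usual parabolic smoothing estimates. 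Setting $\rho[\varphi](x):=\int_\Theta\varphi(x,\theta)\,\textup{d}\theta$ and $F(\varphi)(x,\theta):=\varphi(x,\theta)\,(\rho[\varphi](x)-\theta)(1-\rho[\varphi](x))$, the boundedness of $\Theta$ makes $\varphi\mapsto\rho[\varphi]$ bounded and Lipschitz from $X$ into $\mathrm{BUC}(\R)$, so that $F$ maps bounded subsets of $X$ into bounded subsets of $X$ and is Lipschitz on every such subset.

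\textbf{Local existence, regularity, global existence.} First I would solve, for small $\tau>0$, the fixed-point equation $u(t)=S(t)u_0+\int_0^t S(t-s)F(u(s))\,\textup{d}s$ on the ball of radius $2\|u_0\|_X$ of $\mathcal{C}([0,\tau],X)$; the local Lipschitz continuity of $F$ and the contraction property $\|S(t)\|_{\mathcal{L}(X)}\le1$ make the right-hand side a strict contraction for $\tau$ small, yielding a unique mild solution on a maximal interval $[0,T_{\max})$ with the standard blow-up alternative. On $(0,T_{\max})$, a bootstrap based on the smoothing of $(S(t))_{t\ge0}$ and parabolic Schauder estimates (up to the Neumann boundary) for $u_t-du_{xx}-\alpha u_{\theta\theta}=F(u)$ promotes $u$ to a classical solution in $\mathcal{C}^{1;2}_{t;(x,\theta)}((0,T_{\max})\times\R\times\overline\Theta)$ satisfying \eqref{neumann}, with $t\mapsto u(t,\cdot,\cdot)$ of class $\mathcal{C}^1((0,T_{\max}),X)$ and $u(t,\cdot,\cdot)\to u_0$ in $X$ as $t\downarrow0$; being continuous into $X$, $u$ is locally bounded in time. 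Therefore all the estimates of Section~\ref{sec21} apply and, in particular, \eqref{boundu} yields $\|u(t,\cdot,\cdot)\|_{L^\infty(\R\times\overline\Theta)}\le e^{Ct}\|u_0\|_{L^\infty(\R\times\overline\Theta)}$ for every $t<T_{\max}$, which rules out blow-up in finite time; hence $T_{\max}=+\infty$, the solution is global, and its positivity is exactly the conclusion of Section~\ref{sec21}.

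\textbf{Uniqueness in Tikhonov's class.} Let $u$ be any classical solution of \eqref{eq}--\eqref{initial} in Tikhonov's uniqueness class. One checks as in Section~\ref{sec21} (using only the parabolic maximum principle, which is available here since the zeroth-order reaction coefficient $(\rho-\theta)(1-\rho)$ is bounded above by $K:=(1-\tmin)^2/4$) that $u\ge0$; the same bound gives $u_t\le du_{xx}+\alpha u_{\theta\theta}+Ku$ pointwise, and comparing within Tikhonov's class with $e^{Kt}$ times the solution of $v_t=dv_{xx}+\alpha v_{\theta\theta}$ (Neumann in $\theta$) issued from the bounded, compactly supported datum $v_0(x):=\max_{\theta\in\overline\Theta}u_0(x,\theta)$ shows that $u$ is globally bounded on $[0,T]\times\R\times\overline\Theta$ for each $T>0$, i.e.\ locally bounded in time. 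Then \eqref{eq:max_rho} makes $\rho$, hence $F(u)$, bounded; by the variation-of-constants formula for classical solutions of the associated linear inhomogeneous equation, $u$ satisfies the Duhamel identity $u(t)=S(t)u_0+\int_0^tS(t-s)F(u(s))\,\textup{d}s$, so $u$ is a mild solution and, by the uniqueness established above (local uniqueness propagated by continuation), coincides with the solution built in the previous step.

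\textbf{Main obstacle.} The delicate point is the uniqueness step: the a priori bounds of Section~\ref{sec21} presuppose local boundedness in time, whereas membership in Tikhonov's class only constrains growth at spatial infinity. The remedy is the elementary observation that the reaction term is bounded \emph{from above}, by $(1-\tmin)^2/4$, independently of the \emph{a priori} unknown size of $\rho$; this one-sided bound is precisely what makes a comparison argument run inside Tikhonov's class, restoring the (two-sided) local-in-time boundedness on which all of Section~\ref{sec21}, and hence the Duhamel representation, rely.
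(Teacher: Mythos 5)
Your proposal is correct and takes essentially the same route as the paper: Banach fixed point for the mild (Duhamel) formulation in $X=\mathrm{BUC}(\R\times\overline{\Theta})$, bootstrap to classical regularity, and the \emph{a priori} $L^\infty$ bound of Section~\ref{sec21} to rule out finite-time blow-up. The paper phrases the fixed point via the explicit Green's function of $\partial_t-d\partial_{xx}-\alpha\partial_{\theta\theta}$ with Neumann conditions and is careful to produce a \emph{quantitative} local existence time $T_K$ in~\eqref{defTK} depending only on the $L^\infty$ bound $K$ of the data, so that the continuation argument is completely explicit; your semigroup phrasing ($S(t)=e^{tL}$ with $L$ Neumann-closed on $X$, $F$ locally Lipschitz) is the same device with the blow-up alternative quoted abstractly rather than computed.

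Where you genuinely go further is the last step. The paper's Lemma~\ref{lem:local-well-posedness} and the proof of Proposition~\ref{prop:well-posedness} establish uniqueness among solutions with $t\mapsto u(t,\cdot,\cdot)\in\mathcal{C}([0,+\infty),X)$, i.e.\ among solutions that are \emph{a priori} bounded and continuous into $X$; the Remark about Tikhonov's class sets the framework but the paper does not actually prove uniqueness in the full Tikhonov class. Your argument closes that gap, and the crucial observation is the right one: the reaction coefficient $(\rho-\theta)(1-\rho)$ is bounded \emph{above} by $(1-\tmin)^2/4$ for every real $\rho$, independently of the a priori unknown size of $\rho$. That one-sided bound is exactly what makes a Phragm\'en--Lindel\"of comparison available in Tikhonov's class (after multiplying by $e^{-Kt}$, the zeroth-order coefficient becomes nonpositive, so the standard Gaussian barrier $\exp(\mu x^2/(1-\gamma t)+\nu t)$ works), giving first nonnegativity and then, by comparison with $e^{Kt}$ times a heat solution from the compactly supported datum $\max_\theta u_0(\cdot,\theta)$, local-in-time boundedness; from there $u$ enters the $X$-framework and the fixed-point uniqueness takes over. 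Two small points worth making explicit if you write this up: (i) when you upgrade a bounded classical Tikhonov solution to an $X$-valued curve you must also check $u(t,\cdot,\cdot)\to u_0$ uniformly as $t\downarrow0$; the comparison with the heat solution from a compactly supported datum, together with interior continuity, supplies this (cf.~\eqref{zero-infini}); and (ii) in the barrier computation the coefficient is only bounded above, not below, so the barrier must be a supersolution of the pure diffusion part (after the $e^{-Kt}$ conjugation), which the Gaussian barrier is -- one should not attempt to absorb an unbounded-below zeroth-order term into the barrier directly.
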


The basic idea is, as usual, to first prove the existence and uniqueness of a local-in-time solution by a fixed point argument and then to deduce the existence and uniqueness of a global solution from an \textit{a priori} $L^\infty$ estimate. As such an estimate is proved above in Subsection~\ref{sec21}, we first focus on the local well-posedness. In the sequel, we denote
$$|\Theta|=\tmax-\tmin\ \hbox{ and }\ \theta_m=\max(|\tmax|,|\tmin|).$$

\begin{lem}[Local well-posedness]\label{lem:local-well-posedness} 
For any $\tau\geq 0$, any $K>0$ and any $u_\tau\in X$ with $0\le u_\tau\le K$ in $\R\times\overline{\Theta}$, the following problem:
\begin{equation}
    \label{sys:local-in-time}
    \begin{cases}
        u_t=du_{xx}+\alpha u_{\theta \theta}+u(\rho-\theta)(1-\rho), & t\in(\tau,\tau+T_K],\ x\in \R,\ \theta\in\Theta,\vspace{3pt}\\
        u_\theta(t,x,\tmin)=u_\theta(t,x,\tmax)=0, & t\in(\tau,\tau+T_K],\ x\in \R,\vspace{3pt}\\
        u(\tau,x,\theta)=u_\tau(x,\theta), & x\in \R,\ \theta\in\Theta,
    \end{cases}
\end{equation}
where
\begin{equation}
\label{defTK}
T_K=\frac{1}{3\,(2K|\Theta|+\theta_m)\,(2K|\Theta|+1)}>0,
\end{equation}
admits a unique classical $\mathcal{C}^{1;2}_{t;(x,\theta)}((\tau,\tau+T_K]\times\R\times\overline{\Theta})\cap\mathcal{C}([\tau,\tau+T_K]\times\R\times\overline{\Theta})$ solution~$u$ such that 
\begin{equation}\label{cont}
     t\mapsto u(t,\cdot,\cdot)\in \mathcal{C}\left([\tau,\tau+T_K],X\right)\cap\mathcal{C}^1\left((\tau,\tau+T_K],X\right).
\end{equation}
Furthermore, $u\ge0$ in $[\tau,\tau+T_K]\times\R\times\overline{\Theta}$.
\end{lem}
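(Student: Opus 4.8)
The plan is to set up a Banach fixed point argument on a suitable ball of the space $\mathcal{C}([\tau,\tau+T]\times\R\times\overline\Theta)$ (equivalently, $\mathcal{C}([\tau,\tau+T],X)$), where the unknown nonlocality is resolved by iteration. Concretely, without loss of generality take $\tau=0$. Let $S(t)$ denote the semigroup on $X$ generated by $d\partial_{xx}+\alpha\partial_{\theta\theta}$ with Neumann conditions at $\theta=\tmin,\tmax$; this semigroup is analytic, positivity-preserving, and a contraction for the sup norm (it is a product of the heat semigroup on $\R$ in $x$ and the Neumann heat semigroup on $\overline\Theta$ in $\theta$, both obtained by explicit kernels). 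For $v\in\mathcal{C}([0,T],X)$ define $\rho_v(t,x)=\int_\Theta v(t,x,\theta)\,\textup{d}\theta$ and the fixed-point map
\begin{equation*}
(\Phi v)(t)=S(t)u_0+\int_0^t S(t-s)\big[v(s)(\rho_v(s)-\theta)(1-\rho_v(s))\big]\,\textup{d}s .
\end{equation*}
A mild solution is a fixed point of $\Phi$; parabolic regularity (interior Schauder estimates, using that the coefficient $(\rho-\theta)(1-\rho)$ is continuous and bounded once $u$ is) then upgrades it to the claimed classical $\mathcal{C}^{1;2}_{t;(x,\theta)}$ regularity on $(0,T_K]\times\R\times\overline\Theta$, and to the $\mathcal{C}^1((0,T_K],X)$ regularity via analyticity of $S(t)$.

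First I would fix the invariant set. Let $B=\{v\in\mathcal{C}([0,T],X):0\le v\le 2K,\ \|v(t)\|_X\le 2K\ \forall t\}$. For $v\in B$ we have $0\le\rho_v\le 2K|\Theta|$, hence $\theta-\rho_v\le\theta_m+2K|\Theta|$ and $|1-\rho_v|\le 1+2K|\Theta|$, so the reaction term $g_v:=v(\rho_v-\theta)(1-\rho_v)$ satisfies $\|g_v(s)\|_X\le 2K\,(\theta_m+2K|\Theta|)(1+2K|\Theta|)$. Since $S$ is a sup-norm contraction, $\|(\Phi v)(t)\|_X\le K+T\cdot 2K(\theta_m+2K|\Theta|)(1+2K|\Theta|)$, which is $\le 2K$ precisely when $T\le T_K$ with $T_K$ as in~\eqref{defTK} (the extra factor $3$ absorbing the $2K$ versus the bounds $2K|\Theta|+\theta_m$ and $2K|\Theta|+1$). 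For positivity, I would check that $\Phi$ maps the nonnegative cone to itself: $S(t)u_0\ge0$ since $S(t)$ is positivity preserving and $u_0\ge0$; but the Duhamel integrand $g_v$ is not sign-definite. The clean way is to add a shift: write the equation as $u_t-du_{xx}-\alpha u_{\theta\theta}+\mu u=u\big((\rho-\theta)(1-\rho)+\mu\big)$ with $\mu$ chosen so that $(\rho-\theta)(1-\rho)+\mu\ge0$ on $B$ (any $\mu\ge\theta_m(1+2K|\Theta|)+\tfrac14(2K|\Theta|+1)^2$ works), replace $S(t)$ by $e^{-\mu t}S(t)$, and then the whole Duhamel integrand is $\ge0$ on the cone; the contraction estimate is unaffected (indeed improved) since $\|e^{-\mu t}S(t)\|\le1$. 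Thus $\Phi(B\cap\{v\ge0\})\subset B\cap\{v\ge0\}$.

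Second, I would prove $\Phi$ is a contraction on this set, after shrinking $T$ if necessary (the statement allows any $T\le T_K$, but a strict contraction may need a slightly smaller value; alternatively one iterates the a~priori bound to cover $[0,T_K]$). For $v,w\in B$, the map $v\mapsto g_v$ is Lipschitz from $B$ to $\mathcal{C}([0,T],X)$ with a constant $L=L(K,|\Theta|,\theta_m)$ — this is just the elementary fact that $(v,\rho)\mapsto v(\rho-\theta)(1-\rho)$ is a polynomial with $C^1$-bounds on the bounded set in question, combined with $\|\rho_v-\rho_w\|\le|\Theta|\,\|v-w\|$. Hence $\|\Phi v-\Phi w\|_{\mathcal C([0,T],X)}\le TL\|v-w\|_{\mathcal C([0,T],X)}$, a contraction for $T<1/L$. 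Banach's fixed point theorem on the complete metric space $B\cap\{v\ge0\}$ gives the unique nonnegative mild solution on $[0,T]$; uniqueness in the full class $\mathcal{C}([0,T_K],X)$ (without the a~priori bound $v\le 2K$) follows by a standard Gronwall argument once one notes that any such solution is automatically bounded on $[0,T_K]\times\R\times\overline\Theta$, so it lands in a ball on which the same Lipschitz estimate applies, then bootstrap the length from $T$ to $T_K$ by repeating on adjacent subintervals. Finally the regularity claim~\eqref{cont}: parabolic $L^p$ and Schauder estimates applied locally in $x$ (the domain $\R$ is handled by translation-invariance of the estimates) give $u\in\mathcal{C}^{1;2}_{t;(x,\theta)}((0,T_K]\times\R\times\overline\Theta)$, continuity up to $t=0$ comes from strong continuity of $S(t)$ at $0$ together with continuity of the Duhamel term, and $t\mapsto u(t)\in\mathcal C^1((0,T_K],X)$ follows from analyticity of the semigroup (the Duhamel term with a continuous, in fact Hölder-in-time, forcing is $C^1$ into $X$ for $t>0$).

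The main obstacle, in my view, is not the fixed point itself but the bookkeeping needed to make the explicit constant $T_K$ of~\eqref{defTK} come out — that is, getting the self-mapping of the ball of radius $2K$ with exactly that time length, which forces the choice of ball radius and the placement of the factor $3$ — together with handling positivity cleanly (via the exponential shift $e^{-\mu t}S(t)$, since the raw Duhamel integrand changes sign). A secondary technical point is that $X$ consists of \emph{uniformly} continuous bounded functions, so one must check that $S(t)$ and the Duhamel integral preserve uniform continuity (they do: $S(t)$ maps $BUC$ to $BUC$, and the integrand $g_v(s)$ inherits uniform continuity from $v$ on the bounded set, uniformly in $s$); this is routine but needs to be stated. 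Everything else — Lipschitz bounds on a polynomial nonlinearity, Gronwall uniqueness, Schauder bootstrap — is standard.
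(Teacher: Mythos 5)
Your overall strategy — Duhamel formulation, Banach fixed point on a ball in $\mathcal{C}([\tau,\tau+T],X)$, then parabolic regularity and Gronwall uniqueness — is the same as the paper's, with one structural difference: you freeze the entire nonlinearity, defining $\Phi v = S(t)u_\tau + \int_0^t S(t-s)\,v(s)(\rho_v(s)-\theta)(1-\rho_v(s))\,\textup{d}s$, whereas the paper freezes only $\rho_v$ and lets $\Phi[v]$ itself appear in the linear term (so that $\Phi[v]$ is the mild solution of a genuinely linear parabolic Cauchy problem with known coefficient). Both routes work; the paper's produces the a~priori bound $\|\Phi[v]\|\le K/(1-T(\cdots))$ rather than your $K+T(\cdots)$, and its $\Phi$ preserves nonnegativity automatically by the maximum principle, which is partly why the paper does not bother building positivity into the fixed-point set at all.

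The genuine gap is in your treatment of positivity via the exponential shift. You claim that replacing $S(t)$ by $e^{-\mu t}S(t)$ leaves the contraction estimate ``unaffected (indeed improved),'' but this is false: the shifted nonlinearity $v\mapsto v\big[(\rho_v-\theta)(1-\rho_v)+\mu\big]$ has Lipschitz constant $L+\mu$ on the ball of radius $2K$, where $L$ is the Lipschitz constant of the unshifted nonlinearity, and $\mu$ must be at least $(2K|\Theta|+\theta_m)(2K|\Theta|+1)$ to make the bracket nonnegative. One computes that $T_K L<1$ always (so the \emph{unshifted} contraction works on $[\tau,\tau+T_K]$ with radius $2K$), but $\tfrac{1-e^{-\mu T_K}}{\mu}(L+\mu)\to 4(1-e^{-1/3})\approx 1.13>1$ as $K\to\infty$, so the shifted contraction fails on $[\tau,\tau+T_K]$ for large $K$. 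Your fallback (``shrink $T$, then iterate on adjacent subintervals'') is not developed and has the standard pitfall that the restart data has norm up to $2K$, so the next subinterval has length $T_{2K}<T_K$; without an intermediate growth estimate such as $\|u(t)\|_X\le e^{Ct}\|u_\tau\|_X$ one cannot conclude that the patched intervals reach $\tau+T_K$. Also, your minor claim that $K+T\cdot 2K(\theta_m+2K|\Theta|)(1+2K|\Theta|)\le 2K$ holds ``precisely when $T\le T_K$'' is off by a constant: the exact threshold is $T\le\tfrac{1}{2(2K|\Theta|+\theta_m)(2K|\Theta|+1)}$, of which $T_K$ is merely a sufficient (strictly smaller) value.

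The cleanest repair is exactly what the paper does: run the fixed point \emph{without} the positivity constraint and \emph{without} the $\mu$-shift (as noted above, the raw contraction works on the full $[\tau,\tau+T_K]$ with radius $2K$), and then prove $u\ge 0$ a~posteriori by observing that the constructed $u$ solves the linear parabolic problem $u_t-du_{xx}-\alpha u_{\theta\theta}=a(t,x,\theta)\,u$ with $a=(\rho-\theta)(1-\rho)$ bounded, so $u_\tau\ge 0$ implies $u\ge 0$ by the parabolic maximum principle. Alternatively, if you insist on embedding positivity in the iteration, you must either change the ball radius (the paper's intermediate choice $C=3K/2$ in fact makes the shifted contraction close on $[\tau,\tau+T_K]$) or replace $\mu$ by a bound on only the negative part of $(\rho-\theta)(1-\rho)$; either way, the constant bookkeeping must be done explicitly, since the specific $K$-dependence of $T_K$ is what later feeds the blow-up alternative in the proof of Proposition~\ref{prop:well-posedness}. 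Everything else in your proposal (Gronwall uniqueness in the larger class after noting boundedness, Schauder bootstrap to $\mathcal{C}^{1;2}_{t;(x,\theta)}$, analyticity of $S(t)$ for $\mathcal{C}^1((\tau,\tau+T_K],X)$, and the $BUC$ preservation remark) is sound and routine.
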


\begin{proof}
Define, for $T>0$ and $C>0$, the sets
\begin{equation*}
    X_C = \{ v\in X : \|v\|_X \leq C\}\ \hbox{ and }\ \mathcal{X}_{T,C}=\mathcal{C}\left([\tau,\tau+T|,X_C\right).
\end{equation*}
The set $\mathcal{X}_{T,C}$, endowed with the distance induced by the norm  $\|v\|:=\max_{t\in[\tau,\tau+T]}\|v(t)\|_X$ for $v\in\mathcal{C}([\tau,\tau+T],X)$, is a complete metric space. The elements of~$\mathcal{X}_{T,C}$ can also be considered with a slight abuse of notation as functions of the variables $(t,x,\theta)\in[\tau,\tau+T]\times\R\times\overline{\Theta}$. 

Let $\Phi:\mathcal{X}_{T,C}\mapsto \mathcal{C}\left([\tau,\tau+T],X\right)$ be the mapping that associates to $v\in\mathcal{X}_{T,C}$ the unique mild solution $u=\Phi[v]\in\mathcal{C}([\tau,\tau+T],X)$ of 
\begin{equation*}
    \begin{cases}
        \displaystyle u_t=du_{xx}+\alpha u_{\theta \theta}+u\Big(\int_{\Theta}v-\theta\Big)\,\Big(1-\int_{\Theta}v\Big), & t\in(\tau,\tau+T],\ x\in \R,\  \theta\in\Theta,\vspace{3pt}\\
        u_\theta(t,x,\tmin)=u_\theta(t,x,\tmax)=0, & t\in(\tau,\tau+T],\ x\in \R,\vspace{3pt}\\
        u(\tau,x,\theta)=u_\tau(x,\theta), & x\in \R,\ \theta\in\Theta.
    \end{cases}
\end{equation*}
The solution $u$ is indeed well-defined, as the above problem is just a linear parabolic Cauchy problem: more precisely, letting
$$\rho_v(t,x)=\int_\Theta v(t,x,\theta)\,\textup{d}\theta$$
for $(t,x)\in[\tau,\tau+T]\times\R$, and $G=G(t,x,\theta;s,y,\eta)$ be the Green function associated with the parabolic operator $\partial_t-d\partial_{xx}-\alpha\partial_{\theta\theta}$ in the spatial domain $\R\times\Theta$ with  Neumann boundary conditions on $\R\times\partial\Theta$, one has, for every $(t,x,\theta)\in(\tau,\tau+T]\times\R\times\overline{\Theta}$,
\begin{align*}
    u(t,x,\theta)=\Phi[v](t,x,\theta) & =\int_{\R\times\Theta} G(t,x,\theta;\tau,y,\eta)\,u_\tau(y,\eta)\,\textup{d}y\,\textup{d}\eta \\
    & \quad + \int_\tau^t \int_{\R\times\Theta}G(t,x,\theta;s,y,\eta)\,\Phi[v](s,y,\eta)\times\\
    & \qquad\qquad\qquad\qquad\times(\rho_v(s,y) -\eta)\,(1-\rho_v(s,y))\, \textup{d}y\,\textup{d}\eta\,\textup{d}s.
\end{align*}
We aim at showing that $\Phi$ is a contraction mapping from $\mathcal{X}_{T,C}$ into itself when the positive parameters~$T$ and $C$ are appropriately chosen, so that it admits a unique fixed point. 

To do so, first of all, observe that, for any $v\in\mathcal{X}_{T,C}$, the inequality
$$|(\rho_v(s,y)-\eta)(1-\rho_v(s,y))|\!\leq\! (|\Theta|\|v\|+\theta_m)\,(|\Theta|\|v\|+1),\hbox{ for all $(s,y,\eta)\!\in\![\tau,\tau+T]\!\times\!\R\!\times\!\overline{\Theta}$},$$
together with $\|G(t,x,\theta;s,\cdot,\cdot)\|_{L^1(\R\times\Theta)}=1$ for all $t>s$ and $(x,\theta)\in\R\times\overline{\Theta}$, yields
\begin{align*}
    \|\Phi[v]\| & \leq K + T\,(|\Theta|\|v\|+\theta_m)\,(|\Theta|\|v\|+1)\|\Phi[v]\|\vspace{3pt}\\
    & \leq K+ T\,(C|\Theta|+\theta_m)\,(C|\Theta|+1)\|\Phi[v]\|.
\end{align*}
Assuming that $T>0$ is so small that 
\begin{equation}
    \label{small_T}
    T\,(C|\Theta|+\theta_m)\,(C|\Theta|+1)<1, 
\end{equation}
we deduce
\begin{equation*}
    \|\Phi[v]\|\leq \frac{K}{1-T\,(C|\Theta|+\theta_m)\,(C|\Theta|+1)},
\end{equation*}
so that $\Phi$ maps $\mathcal{X}_{T,C}$ into itself as soon as~\eqref{small_T} is fulfilled together with 
\begin{equation}
    \label{small_C}
    \frac{K}{1-T\,(C|\Theta|+\theta_m)\,(C|\Theta|+1)}\le C.
\end{equation}

Now, assume that the conditions~\eqref{small_T}--\eqref{small_C} are indeed satisfied, so that $\Phi(\mathcal{X}_{T,C})\subset\mathcal{X}_{T,C}$. Let $v,w\in\mathcal{X}_{T,C}$, and denote $\rho_v=\int_\Theta v(\cdot,\cdot,\theta)\,\textup{d}\theta$ and $\rho_w=\int_\Theta w(\cdot,\cdot,\theta)\,\textup{d}\theta$. After some straightforward calculations, we find that $z=\Phi[v]-\Phi[w]\in\mathcal{C}([\tau,\tau+T],X)$ is a mild solution of
\begin{equation*}
    z_t-dz_{xx}-\alpha z_{\theta\theta}= z\left(\rho_v -\theta\right)\left(1-\rho_v\right)+ \Phi[w]\left( 1+\theta-\rho_v-\rho_w \right)(\rho_v-\rho_w)
\end{equation*}
in $(\tau,\tau+T]\times\R\times\Theta$ with Neumann boundary conditions on $(\tau,\tau+T]\times\R\times\partial\Theta$. Using similarly the Green's function $G$ and $\|G(t,x,\theta;s,\cdot,\cdot)\|_{L^1(\R\times\Theta)}=1$ for all $t>s$ and $(x,\theta)\in\R\times\overline{\Theta}$, we find that
\begin{equation*}
    \|z\|\leq T\,(C|\Theta|+\theta_m)\,(C|\Theta|+1)\,\|z\|+T\,C\,(1+\theta_m+2C|\Theta|)\,|\Theta|\,\|v-w\|,
\end{equation*}
so that
\begin{equation*}
    \|z\|=\|\Phi[v]-\Phi[w]\|\leq \frac{T\,C\,(1+\theta_m+2C|\Theta|)\,|\Theta|}{1-T\,(C|\Theta|+\theta_m)\,(C|\Theta|+1)}\,\|v-w\|.
\end{equation*}
Therefore $\Phi$ is a contraction mapping as soon as 
\begin{equation}
    \label{contractance}
    \frac{T\,C\,(1+\theta_m+2C|\Theta|)\,|\Theta|}{1-T\,(C|\Theta|+\theta_m)\,(C|\Theta|+1)}<1.
\end{equation}

One can easily check that the conditions \eqref{small_T}--\eqref{contractance} are compatible: for instance one may choose
$$C=\frac{3K}{2}>0, \quad  T=\frac{1}{3\,(C|\Theta|+\theta_m)\,(C|\Theta|+1)}>0.$$
As a consequence, by virtue of the Banach fixed point theorem, $\Phi$ admits a unique fixed point $v\in\mathcal{X}_{T,C}$ for the above choice of $T$ and $C$. By standard parabolic estimates, $v$ is then a classical $\mathcal{C}^{1;2}_{t;(x,\theta)}((\tau,\tau+T]\times\R\times\overline{\Theta})\cap\mathcal{C}([\tau,\tau+T]\times\R\times\overline{\Theta})$ solution of~\eqref{sys:local-in-time} (the continuity of $v$ in $[\tau,\tau+T]\times\R\times\overline{\Theta}$ is actually automatic by construction), and the map~$t\mapsto v(t,\cdot,\cdot)$ belongs to  $\mathcal{C}([\tau,\tau+T],X)\cap\mathcal{C}^1((\tau,\tau+T],X)$.

Furthermore, by picking now
$$\widetilde{C}=2K$$
and
$$T_K=\frac{1}{3\,(2K|\Theta|+\theta_m)\,(2K|\Theta|+1)}\in(0,T)$$
as in~\eqref{defTK}, it follows as above that $\Phi$ admits a unique fixed point $u$ in $\mathcal{X}_{T_K,2K}$, which is a classical $\mathcal{C}^{1;2}_{t;(x,\theta)}((\tau,\tau+T_K]\times\R\times\overline{\Theta})\cap\mathcal{C}([\tau,\tau+T_K]\times\R\times\overline{\Theta})$ solution of~\eqref{sys:local-in-time}, and the map $t\mapsto u(t,\cdot,\cdot)$ belongs to $\mathcal{C}([\tau,\tau+T_K],X)\cap\mathcal{C}^1((\tau,\tau+T_K],X)$. Since the function $v$ restricted to $[\tau,\tau+T_K]\times\R\times\overline{\Theta}$ solves the same problem as $u$ and since $v(t,\cdot,\cdot)\in\mathcal{X}_{T_K,C}\subset\mathcal{X}_{T_K,2K}$ for every $t\in[\tau,\tau+T]\supset[\tau,\tau+T_K]$, it follows by uniqueness that $u\equiv v_{|[\tau,\tau+T_K]\times\R\times\overline{\Theta}}$. On the other hand, since the left-hand sides of the inequalities~\eqref{small_T}--\eqref{contractance} are increasing with respect to $T\in[0,T_K]$, one infers that, for any $T'\in(0,T_K]$, $\Phi$ has a unique fixed point in $\mathcal{X}_{T',2K}$, and that this unique fixed point is nothing but the restriction of~$u$ and~$v$ in $[\tau,\tau+T']\times\R\times\overline{\Theta}$.

Finally, consider any mild solution $U$ of~\eqref{sys:local-in-time} with
$$t\mapsto U(t,\cdot,\cdot)\in\mathcal{C}([\tau,\tau+T_K],X)\cap\mathcal{C}^1((\tau,\tau+T_K],X)$$
($U$ is then also a classical $\mathcal{C}^{1;2}_{t;(x,\theta)}((\tau,\tau+T_K]\times\R\times\overline{\Theta})\cap\mathcal{C}([\tau,\tau+T_K]\times\R\times\overline{\Theta})$ solution). We claim that
$$U\equiv u\ \hbox{ in }[\tau,\tau+T_K]\times\R\times\overline{\Theta}.$$
Indeed, first of all, since $0\le U(\tau,\cdot,\cdot)=u_\tau\le K$ in $\R\times\overline{\Theta}$, there is by continuity a maximal time $T_U\in(0,T_K]$ such that $\|U(t,\cdot,\cdot)\|_X<2K$ for all $t\in[0,T_U)$, and $\|U(T_U,\cdot,\cdot)\|_X=2K$ if $T_U<T_K$. By uniqueness, one gets that $U\equiv u\equiv v$ in $[\tau,\tau+T']\times\R\times\overline{\Theta}$ for every $T'\in(0,T_U)$, and then in $[\tau,\tau+T_U]\times\R\times\overline{\Theta}$ by continuity. But since $\|v(t,\cdot,\cdot)\|_X\le C=3K/2<2K$ for all $t\in[\tau,\tau+T]\supset[\tau,\tau+T_K]$, it follows that $\|U(T_U,\cdot,\cdot)\|_X\le3K/2<2K$. Hence $T_U=T_K$ and $U\equiv u$ in $[\tau,\tau+T_K]\times\R\times\overline{\Theta}$. Therefore, the constructed solution~$u$ is the unique mild and classical solution of~\eqref{sys:local-in-time}--\eqref{cont}.

Lastly, the nonnegativity of $u$ in $[0,T_K]\times\R\times\overline{\Theta}$ follows from the nonnegativity of $u_\tau$ in $\R\times\overline{\Theta}$ and from the comparison of $u$ with the trivial solution $0$, as in Section~\ref{sec21}. The proof of Lemma~\ref{lem:local-well-posedness} is thereby complete.
\end{proof}

\begin{proof}[Proof of Proposition $\ref{prop:well-posedness}$]
From Lemma~\ref{lem:local-well-posedness}, there exist a maximal existence time $T^*\in(0,+\infty]$ and a unique mild and classical solution $u\in \mathcal{C}^{1;2}_{t;(x,\theta)}((0,T^*)\times\R\times\overline{\Theta})\cap \mathcal{C}([0,T^*)\times\R\times\overline{\Theta})$ of~\eqref{eq}--\eqref{initial} such that $t\mapsto u(t,\cdot,\cdot)\in\mathcal{C}([0,T^*),X)\cap\mathcal{C}^1((0,T^*),X)$ (in particular,~$u$ is locally bounded in time in $[0,T^*)\times\R\times\overline{\Theta}$). Furthermore, $u\ge0$ in~$[0,T^*)\times\R\times\overline{\Theta}$. Lemma~\ref{lem:local-well-posedness} and the quantitative estimate~\eqref{defTK} in terms of $K$ also imply that
$$\|u(t,\cdot,\cdot)\|_X=\|u(t,\cdot,\cdot)\|_{L^\infty(\R\times\Theta)}\to+\infty\hbox{ as }t\mathop{\to}^<T^*\ \hbox{ if }T^*<+\infty.$$
But the classical solution $u$ is necessarily globally bounded in $[0,T^*)\times\R\times\overline{\Theta}$, from the arguments of Section~\ref{sec21}. Therefore, $T^*=+\infty$ and $u$ satisfies all desired properties stated in Proposition~\ref{prop:well-posedness}, including its positivity in $(0,+\infty)\times\R\times\overline{\Theta}$ by Section~\ref{sec21}. The proof of Proposition~\ref{prop:well-posedness} is thereby complete.
\end{proof}

\section{Persistence versus extinction}\label{s:ext-pers}

In this section, we consider the Cauchy problem \eqref{eq}--\eqref{initial} and we figure out whether solutions are persistent or go extinct in long time. Hereafter, extinction is defined as
\begin{equation*}
    \lim_{t\to+\infty}\Big(\sup_{(x,\theta)\in\mathbb{R}\times\overline{\Theta}}u(t,x,\theta)\Big)=0
\end{equation*}
and persistence is the opposite statement, namely
\begin{equation*}
    \limsup_{t\to+\infty}\Big(\sup_{(x,\theta)\in\mathbb{R}\times\overline{\Theta}}u(t,x,\theta)\Big)>0.
\end{equation*}
From the comparison~\eqref{ineg} (with here $T^*=+\infty$, any $\tau>0$ and any $p>1$), these two properties are equivalent to the similar ones obtained by replacing $\sup_{(x,\theta)\in\R\times\overline{\Theta}}u(t,x,\theta)$ with $\sup_{x\in\R}\rho(t,x)$.

It turns out that there are several different answers to the question of persistence or extinction, according to the trait range $\Theta=(\tmin,\tmax)$, to the initial conditions, and to the principal eigenvalue $\lambda_\alpha$ of the linearized operator around the trivial state~$0$ in the trait variables. We start by studying the properties of this eigenvalue $\lambda_\alpha$ in Section~\ref{sec:ev}. Then we prove in Section~\ref{sec:imposs_persis} the systematic (independently of the initial conditions $u_0$ satisfying~\eqref{initial}) extinction when $\tmin\ge1/2$ (column~7 of Table~\ref{table:summary}), and in Section~\ref{sec:imposs_extin} the systematic persistence when $\lambda_\alpha\le0$ (lines~2 and~3 of Table~\ref{table:summary}). We then show in Section~\ref{sec:poss_extin} the possibility of extinction when $\lambda_\alpha>0$ for small initial data (columns~3,~4,~5,~6 and~7 of Table~\ref{table:summary}), and we discuss in Section~\ref{sec:poss_persis} the possibility of persistence when~$\lambda_\alpha>0$ and~$\tmin<1/2$ (columns~3,~4,~5 and~6 of Table~\ref{table:summary}).

\subsection{A principal eigenvalue problem}\label{sec:ev}

For a given $\alpha>0$, we consider the Neumann principal eigenproblem
\begin{equation}
\label{vp}
\begin{cases}
-\alpha \varphi''+\theta\varphi=\lambda \varphi & \text{ in } \overline{\Theta},\vspace{3pt}\\
\varphi'(\tmin)=\varphi'(\tmax)=0,\vspace{3pt}\\
\varphi>0& \text{ in }\overline{\Theta},\vspace{3pt}
\end{cases}
\end{equation}
and denote $(\lambda_\alpha,\varphi_\alpha)$ the principal eigenpair, normalized by $\max_{\overline{\Theta}}\varphi_\alpha=1$. We have the variational formula
\begin{equation}
\label{rayleigh}
\lambda_\alpha=\min \left\{ Q_\alpha(\varphi):=\int_{\Theta}\big(\alpha \varphi'^2(\theta)+\theta \varphi^2(\theta)\big)\, \textup{d}\theta: \varphi \in E \right\},
\end{equation}
with
$$E:=\{\varphi \in H^1(\Theta):\|\varphi\|_{L^2(\Theta)}=1\}.$$

\begin{lem}[On the principal eigenvalue $\lambda _\alpha$]\label{lem:vp} The principal eigenvalue $\lambda_\alpha$ enjoys the following properties:
\begin{enumerate}[label=(\roman*)]
\item for all $\alpha>0$,
$$\tmin<\lambda _\alpha<\frac{\tmin+\tmax}{2};$$
\item the function $\alpha \mapsto \lambda _\alpha$ is increasing and concave in $(0,+\infty)$, and
$$\lim_{\alpha \to 0} \lambda_\alpha=\tmin, \quad \lim_{\alpha\to+\infty}\lambda_\alpha=\frac{\tmin+\tmax}{2};$$
\item if
$$\frac32\,\Big(\frac{\pi^2\alpha}{2}\Big)^{1/3}+\tmin\leq 0\ \hbox{ and }\ \Big(\frac{\pi^2\alpha}{2}\Big)^{1/3}+\tmin\leq\tmax,$$
then
$$\lambda_\alpha \leq \frac 32 \left(\frac{\pi ^2\alpha}2 \right)^{1/3}+\tmin.$$
\end{enumerate}
\end{lem}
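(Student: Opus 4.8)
The plan is to read off all three properties from the Rayleigh characterisation \eqref{rayleigh}, $\lambda_\alpha=\min_{\varphi\in E}Q_\alpha(\varphi)$, invoking the positive principal eigenfunction $\psi_\alpha$ ($L^2$-normalised, it realises this minimum) only through one elementary remark: $\psi_\alpha$ is \emph{not} constant --- a nonzero constant plugged into the equation in \eqref{vp} would force $\theta\psi_\alpha\equiv\lambda_\alpha\psi_\alpha$ on $\overline{\Theta}$, hence $\psi_\alpha\equiv0$, against $\psi_\alpha>0$ --- so that $\int_\Theta(\psi_\alpha')^2>0$ for every $\alpha>0$. Then (i) is immediate: for any $\varphi\in E$, $Q_\alpha(\varphi)\ge\int_\Theta\theta\varphi^2=\tmin+\int_\Theta(\theta-\tmin)\varphi^2>\tmin$, strictly because $\theta-\tmin>0$ a.e.\ on $\Theta$ and $\varphi\not\equiv0$, and since the minimum is attained, $\lambda_\alpha>\tmin$; testing with the normalised constant $\varphi_0\equiv|\Theta|^{-1/2}$ gives $\lambda_\alpha\le Q_\alpha(\varphi_0)=|\Theta|^{-1}\int_\Theta\theta\,\textup{d}\theta=(\tmin+\tmax)/2$, with strict inequality since equality would make $\varphi_0$ a minimiser, hence a (constant) principal eigenfunction.

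For (iii) I would use a concentrated test function. Set $L=(\pi^2\alpha/2)^{1/3}$, so that the assumption $(\pi^2\alpha/2)^{1/3}+\tmin\le\tmax$ ensures $[\tmin,\tmin+L]\subset\overline{\Theta}$, and take the quarter-cosine $\varphi(\theta)=\cos\bigl(\pi(\theta-\tmin)/(2L)\bigr)$ on $[\tmin,\tmin+L]$, extended by $0$ on the remainder of $\Theta$; this $\varphi$ lies in $H^1(\Theta)$. Elementary integrals give $\int_\Theta\varphi^2=L/2$, $\int_\Theta(\varphi')^2=\pi^2/(8L)$ and $\int_\Theta\theta\varphi^2\le(\tmin+L)\int_\Theta\varphi^2$, so after normalisation $\lambda_\alpha\le\alpha\pi^2/(4L^2)+\tmin+L$; the choice $L^3=\pi^2\alpha/2$ makes $\alpha\pi^2/(4L^2)=L/2$, whence the bound becomes $\tfrac{3}{2}L+\tmin=\tfrac{3}{2}(\pi^2\alpha/2)^{1/3}+\tmin$. (Only the condition on $\tmax$ is used in this argument; the other hypothesis of (iii) is not needed for the inequality and merely records the regime $\lambda_\alpha\le0$ in which the estimate is applied in Section~\ref{s:ext-pers}.)

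For (ii), monotonicity and concavity are essentially formal: for fixed $\varphi\in E$ the map $\alpha\mapsto Q_\alpha(\varphi)=\alpha\int_\Theta(\varphi')^2+\int_\Theta\theta\varphi^2$ is affine and nondecreasing, so its pointwise infimum $\alpha\mapsto\lambda_\alpha$ is concave, and for $0<\alpha_1<\alpha_2$ one has $\lambda_{\alpha_2}=Q_{\alpha_2}(\psi_{\alpha_2})>Q_{\alpha_1}(\psi_{\alpha_2})\ge\lambda_{\alpha_1}$, the strict step using $\int_\Theta(\psi_{\alpha_2}')^2>0$. As $\alpha\to0^+$, $L=(\pi^2\alpha/2)^{1/3}\le|\Theta|$ eventually, so (iii) squeezes $\tmin<\lambda_\alpha\le\tfrac{3}{2}L+\tmin\to\tmin$. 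As $\alpha\to+\infty$, (i) gives $\alpha\int_\Theta(\psi_\alpha')^2\le\lambda_\alpha-\tmin<(\tmax-\tmin)/2$, hence $\|\psi_\alpha'\|_{L^2(\Theta)}\to0$; by the Poincar\'e--Wirtinger inequality $\psi_\alpha$ then converges in $L^2(\Theta)$ to a constant which, being nonnegative with unit $L^2$ norm, equals $|\Theta|^{-1/2}$, so that $\int_\Theta\theta\psi_\alpha^2\to|\Theta|^{-1}\int_\Theta\theta\,\textup{d}\theta=(\tmin+\tmax)/2$; since $\lambda_\alpha\ge\int_\Theta\theta\psi_\alpha^2$, comparing with (i) gives $\lambda_\alpha\to(\tmin+\tmax)/2$. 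The only non-routine point is this last limit, where the $H^1$-bound on $\psi_\alpha$ must be turned into $L^2$-convergence of $\psi_\alpha$ before one can pass to the limit in $\int_\Theta\theta\psi_\alpha^2$; everything else (monotonicity, concavity, the explicit integrals of (iii)) is elementary.
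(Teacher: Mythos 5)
Your proposal is correct and, at its core, takes the same route as the paper: the Rayleigh quotient $\lambda_\alpha=\min_{\varphi\in E}Q_\alpha(\varphi)$, concavity as an infimum of affine maps, and for (iii) literally the same test function (your quarter-cosine $\cos\bigl(\pi(\theta-\tmin)/(2L)\bigr)$ equals the paper's $\sin\bigl(\pi(\tmin+\eta-\theta)/(2\eta)\bigr)$ with $\eta=L$), optimized at $L^3=\pi^2\alpha/2$. The difference is one of completeness: the paper disposes of (i), the monotonicity, and the two limits in (ii) by citing Hutson's classical lemma, whereas you derive them directly — strict bounds in (i) from the Rayleigh quotient plus the observation that the principal eigenfunction is non-constant, strict monotonicity by testing $Q_{\alpha_1}$ against $\psi_{\alpha_2}$, the $\alpha\to0$ limit by squeezing with the upper bound of (iii) (valid for small $\alpha$ since only the $\tmax$-condition is needed), and the $\alpha\to+\infty$ limit from the uniform $H^1$ bound $\alpha\|\psi_\alpha'\|_{L^2}^2\le\lambda_\alpha-\tmin<(\tmax-\tmin)/2$ together with Poincar\'e--Wirtinger and the $L^2$-normalisation. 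All of these steps are sound. Your parenthetical observation that the first hypothesis of (iii) (namely $\tfrac32(\pi^2\alpha/2)^{1/3}+\tmin\le0$) is not used in obtaining the inequality is also correct: the paper restricts $\eta\in(0,-\tmin]$ and invokes this hypothesis to place $\eta_{\textup{opt}}$ in that range, but that restriction is superfluous — only $\tmin+\eta_{\textup{opt}}\le\tmax$ is needed to keep the test function supported in $\overline{\Theta}$ — and this is precisely what allows your squeeze argument for $\alpha\to0$ to go through.
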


\begin{proof} In $(ii)$, the fact that the function $\alpha\mapsto\lambda_\alpha$ is increasing in $(0,+\infty)$ and the determination of its limits as $\alpha\to0$ and $\alpha\to+\infty$ are classical properties (see e.g.~\cite[Lemma~2.1]{Hut-95}), that also yield~$(i)$. Additionally, since $\lambda_\alpha=\min_{\varphi\in E}Q_\alpha(\varphi)$ for each $\alpha>0$ and since the map $\alpha\mapsto Q_\alpha(\varphi)$ is linear (hence concave) in $(0,+\infty)$ for each $\varphi\in E$, one gets that the map $\alpha\mapsto \lambda_\alpha$ is concave in $(0,+\infty)$, and therefore continuous.

Let us now turn to the proof of~$(iii)$, which is slightly more subtle. Notice first that the condition $(3/2)\times(\pi ^2\alpha/2)^{1/3}+\tmin\leq 0$ implies that $\tmin<0$. We shall use the solution of a related eigenproblem (with constant coefficients) on an interval $[\tmin,\tmin+\eta]$, whose size $\eta\in(0,-\tmin]$ is to be optimized. Precisely, observe that the Neumann-Dirichlet principal eigenproblem
$$\begin{cases}
-\alpha \varphi''+(\tmin+\eta)\varphi=\lambda \varphi & \text{ in }[\tmin,\tmin+\eta],\vspace{3pt}\\
\varphi'(\tmin)=0,\ \ \varphi(\tmin+\eta)=0,\vspace{3pt}\\
\varphi>0& \text{ in }[\tmin,\tmin+\eta),\vspace{3pt}
\end{cases}$$
is explicitly solved as 
$$\varphi(\theta)=\varphi_\eta(\theta)=\sin\left(\frac{\pi}{2\eta}(\tmin+\eta-\theta)\right)\hbox{ for }\theta\in[\tmin,\tmin+\eta],\ \  \lambda=\widetilde{\lambda}_\eta=\frac{\pi^2\alpha}{4\eta^2}+\tmin+\eta.$$
Notice that the infimum $\inf_{0<\eta\leq -\tmin}\widetilde{\lambda}_ \eta$ is reached by choosing
$$\eta=\eta_{\textup{opt}}:=\left(\frac{\pi ^2\alpha}2 \right)^{1/3}\in(0,-\tmin),$$
and is equal to
$$\inf_{0<\eta\le-\tmin}\widetilde{\lambda}_\eta=\frac32\,\Big(\frac{\pi ^2\alpha}{2}\Big)^{1/3}+\tmin=:\widetilde{\lambda}_{\textup{opt}},$$
which is nonpositive by assumption. We denote $(\widetilde{\lambda}_{\textup{opt}},\varphi _{\textup{opt}})$ the eigenpair associated with $\eta=\eta_{\textup{opt}}$ and with $\|\varphi_{\textup{opt}}\|_{L^2(\tmin,\tmin+\eta_{\textup{opt}})}=1$. We use $\varphi_{\textup{opt}}$ (extended by zero in~$(\tmin+\eta_{\textup{opt}},\tmax]$, notice that $\tmin+\eta_{\textup{opt}}\le\tmax$ by assumption) as a test function in~$E$ and obtain
\begin{eqnarray*}
Q_\alpha(\varphi_{\textup{opt}})&=&\int_{\tmin}^{\tmin+\eta_{\textup{opt}}}\big(\alpha \varphi_{\textup{opt}}'^2(\theta)+\theta \varphi_{\textup{opt}}^2(\theta)\big)\,\textup{d}\theta\\
&\leq& \int_{\tmin}^{\tmin+\eta_{\textup{opt}}}\big(\alpha \varphi_{\textup{opt}}'^2(\theta)+(\tmin+\eta_{\textup{opt}}) \varphi_{\textup{opt}}^2(\theta)\big)\,\textup{d}\theta=\widetilde{\lambda}_{\textup{opt}}
\end{eqnarray*}
and thus $\lambda_\alpha \leq\widetilde{\lambda}_{\textup{opt}}=(3/2)\times(\pi^2\alpha/2)^{1/3}+\tmin$. The proof of Lemma~\ref{lem:vp} is thereby complete.
\end{proof}

\begin{lem}[On the principal eigenfunction $\varphi _\alpha$]\label{lem:fctopropre} For each $\alpha>0$, the principal eigenfunction $\varphi_\alpha$ of~\eqref{vp} is decreasing in $[\tmin,\tmax]$, strictly concave in $[\tmin,\lambda_\alpha)$, and strictly convex in $(\lambda_\alpha,\tmax]$.
\end{lem}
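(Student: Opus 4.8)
The plan is to read everything off the eigenvalue equation $-\alpha\varphi_\alpha'' + \theta\varphi_\alpha = \lambda_\alpha\varphi_\alpha$, which we rewrite as
\begin{equation*}
\varphi_\alpha''(\theta) = \frac{\theta - \lambda_\alpha}{\alpha}\,\varphi_\alpha(\theta)\qquad\text{for }\theta\in[\tmin,\tmax].
\end{equation*}
Since $\varphi_\alpha > 0$ on $\overline{\Theta}$ and $\alpha > 0$, the sign of $\varphi_\alpha''$ is exactly the sign of $\theta - \lambda_\alpha$. By Lemma~\ref{lem:vp}(i) we have $\tmin < \lambda_\alpha < (\tmin+\tmax)/2 < \tmax$, so $\lambda_\alpha$ is an interior point of $\Theta$. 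Hence $\varphi_\alpha'' < 0$ on $[\tmin,\lambda_\alpha)$ and $\varphi_\alpha'' > 0$ on $(\lambda_\alpha,\tmax]$, which is precisely the stated strict concavity and strict convexity (strictness being immediate from the strict positivity of $\varphi_\alpha$).

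For monotonicity, I would use the concavity/convexity together with the two Neumann conditions $\varphi_\alpha'(\tmin) = \varphi_\alpha'(\tmax) = 0$. On $[\tmin,\lambda_\alpha]$, $\varphi_\alpha'$ is strictly decreasing (as $\varphi_\alpha'' < 0$ there), and it starts at $\varphi_\alpha'(\tmin) = 0$, so $\varphi_\alpha' < 0$ on $(\tmin,\lambda_\alpha]$. On $[\lambda_\alpha,\tmax]$, $\varphi_\alpha'$ is strictly increasing (as $\varphi_\alpha'' > 0$ there) and it ends at $\varphi_\alpha'(\tmax) = 0$, so $\varphi_\alpha' < 0$ on $[\lambda_\alpha,\tmax)$. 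Combining, $\varphi_\alpha' < 0$ on $(\tmin,\tmax)$, i.e. $\varphi_\alpha$ is (strictly) decreasing on $[\tmin,\tmax]$.

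There is essentially no hard part here: the only thing to be slightly careful about is that $\lambda_\alpha$ genuinely lies strictly inside $(\tmin,\tmax)$, so that both the concave piece and the convex piece are nondegenerate intervals — but this is exactly what Lemma~\ref{lem:vp}(i) gives, and it is also where the well-posedness of the principal eigenpair (smoothness of $\varphi_\alpha$, so that $\varphi_\alpha'$ and $\varphi_\alpha''$ make classical sense up to the boundary) is invoked. If one wanted to avoid even naming $\lambda_\alpha$'s location, one could alternatively note that $\varphi_\alpha$ decreasing forces $\varphi_\alpha'$ to change monotonicity exactly once, but going through $\theta-\lambda_\alpha$ directly is cleaner and gives both claims at once.
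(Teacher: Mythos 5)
Your proof is correct and takes essentially the same route as the paper: read the sign of $\varphi_\alpha''$ directly from the ODE $\alpha\varphi_\alpha''=(\theta-\lambda_\alpha)\varphi_\alpha$ together with the positivity of $\varphi_\alpha$ and the location $\tmin<\lambda_\alpha<\tmax$ from Lemma~\ref{lem:vp}, then combine the concavity/convexity with the two Neumann conditions to deduce $\varphi_\alpha'<0$. Your write-up is somewhat more explicit about why each Neumann endpoint pins down the sign of $\varphi_\alpha'$ on its half of the interval, but the argument is identical in substance.
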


\begin{proof} Remember first that $\tmin<\lambda_\alpha<(\tmin+\tmax)/2<\tmax$ from Lemma~\ref{lem:vp}. The strict concavity/convexity properties follow from the equation $\alpha \varphi_\alpha''(\theta)=(\theta-\lambda_\alpha)\varphi_\alpha(\theta)$ and the positivity of $\varphi_\alpha$ in $[\tmin,\tmax]$. Next, $\varphi_\alpha'(\tmin)=0$ and the strict concavity in $[\tmin,\lambda_\alpha)$ enforce $\varphi_\alpha$ to decrease on this interval and then in $[\tmin,\lambda_\alpha]$. A similar argument applies in $[\lambda_\alpha,\tmax]$.
\end{proof}

To complete this section, let us observe that Lemma~\ref{lem:vp} gives all the impossible boxes in Table~\ref{table:summary}, whereas all other boxes are truly possible. The outcome of the solutions of~\eqref{eq}--\eqref{initial} in the possible boxes may or may not depend on the initial data and on the parameter $\alpha$, as we are going to see in the next subsections.

\subsection{$\tmin\ge 1/2$ makes persistence impossible}\label{sec:imposs_persis}

When $\tmin\geq1/2$ (column 7 of Table~\ref{table:summary}), no population can escape from extinction, as the following result shows.

\begin{thm}[Systematic extinction]\label{thm:syst-ext} If $\tmin\geq1/2$, then all solutions of the Cauchy problem \eqref{eq}--\eqref{initial} go extinct.
\end{thm}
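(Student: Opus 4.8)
To prove this, the plan is to pass to the equation~\eqref{eq-rho} for the mass,
\[
\rho_t=d\rho_{xx}+\rho\,(\rho-\tb)\,(1-\rho),\qquad \tmin<\tb(t,x)<\tmax<1,
\]
and to dominate its reaction term by a \emph{bistable} nonlinearity $h$ whose $0$--$1$ connecting front has \emph{negative} speed, so that $\rho$ becomes a subsolution of a classical retracting bistable equation. By~\eqref{ineg}, extinction of $u$ is equivalent to $\sup_{x\in\R}\rho(t,x)\to0$, so this reduction is harmless, and by Section~\ref{sec21} we may freely use that $\rho$ is positive, globally bounded, and vanishes at $x=\pm\infty$.

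First I would build the majorant. For $\rho\ge1$, since $\tb<\tmax<1\le\rho$ one has $\rho(1-\rho)\le0$ and $\rho-\tb\ge\rho-\tmax$, hence $\rho(\rho-\tb)(1-\rho)\le\rho(\rho-\tmax)(1-\rho)\le0$. For $0\le\rho\le1$, where $\rho(1-\rho)\ge0$, the crude bound $\tb>\tmin$ only gives $\rho(\rho-\tb)(1-\rho)\le\rho(\rho-\tmin)(1-\rho)$, whose integral over $[0,1]$ equals $\tfrac1{12}-\tfrac{\tmin}{6}$, i.e.\ it is $\le0$ but vanishes exactly in the borderline case $\tmin=1/2$. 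To obtain a \emph{strict} inequality, I would exploit the fine density/mass comparison~\eqref{ineg0} quantitatively: applied with $\tau=1$ and $p=2$ it gives $C_0>0$ with $u(t,x,\theta)\ge(\rho(t,x)/C_0)^2$ for all $t\ge1$, hence, since $\theta-\tmin\ge0$ on $\Theta$,
\[
\rho\,(\tb-\tmin)=\int_{\Theta}(\theta-\tmin)\,u\,\textup{d}\theta\ge\Big(\frac{\rho}{C_0}\Big)^2\int_{\Theta}(\theta-\tmin)\,\textup{d}\theta=\frac{|\Theta|^2}{2C_0^2}\,\rho^2 ,
\]
that is, $\tb(t,x)\ge\tmin+K\rho(t,x)$ for $t\ge1$, where $K>0$ may be chosen with $0<K<1-\tmin$. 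This upgrades the previous bound, for $0\le\rho\le1$ and $t\ge1$, to $\rho(\rho-\tb)(1-\rho)\le\rho\big((1-K)\rho-\tmin\big)(1-\rho)$. I would then set
\[
h(\rho):=\rho\,\big((1-K)\rho-\tmin\big)(1-\rho)\ \ \text{for }0\le\rho\le1,\qquad h(\rho):=\rho\,(\rho-\tmax)(1-\rho)\ \ \text{for }\rho\ge1,
\]
and check the elementary facts: $h$ is continuous and locally Lipschitz, it is bistable with stable zeros $0$ and $1$ and unstable zero $\tmin/(1-K)\in(0,1)$, it is negative on $(1,+\infty)$, and — using $\tmin\ge1/2$ — it satisfies $\int_0^1 h=\tfrac{1-K-2\tmin}{12}\le-\tfrac{K}{12}<0$. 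By construction, $\rho(\rho-\tb)(1-\rho)\le h(\rho)$ for all $t\ge1$ and all $x\in\R$.

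I would then conclude by comparison. For $t\ge1$, $\rho$ is a subsolution of $v_t=dv_{xx}+h(v)$, and $\rho(1,\cdot)$ is positive, bounded, and decays faster than any exponential at $x=\pm\infty$ — being dominated, as around~\eqref{boundf}--\eqref{zero-infini}, by the solution of $w_t=dw_{xx}+Cw$ starting from the compactly supported datum $\max_{\theta\in\overline{\Theta}}u_0(\cdot,\theta)$. Letting $v$ solve $v_t=dv_{xx}+h(v)$ with $v(0,\cdot)=\rho(1,\cdot)$, the parabolic comparison principle yields $\rho(1+s,\cdot)\le v(s,\cdot)$ for $s\ge0$. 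Since $h$ is bistable with $\int_0^1 h<0$, its $0$--$1$ front is retracting and $v(0,\cdot)$ is a bounded datum vanishing at infinity (so, after it has dropped below $1$, trapped beneath suitably shifted translates of a retracting front pair); the classical theory of bistable reaction--diffusion equations (see~\cite{AW-78} and the threshold results~\cite{ADF-19,Du-Mat-10,P-11,Zla06}) then forces $\|v(s,\cdot)\|_{L^\infty(\R)}\to0$ as $s\to+\infty$. Hence $\sup_{x\in\R}\rho(t,x)\to0$, and by~\eqref{ineg} also $\|u(t,\cdot,\cdot)\|_{L^\infty(\R\times\overline{\Theta})}\to0$, which is the claimed systematic extinction.

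I expect the main obstacle to be the borderline value $\tmin=1/2$: the naive majorant $\rho(\rho-\tmin)(1-\rho)$ is then exactly balanced, its $0$--$1$ connection is a standing wave, and the elementary retraction argument breaks down. Resolving this forces one to use the strict inequality $\tb>\tmin$ \emph{quantitatively}, which is precisely what the subtle density-versus-mass estimate~\eqref{ineg0} provides; extracting the constant $K$ and verifying that the modified $h$ is still bistable with negative mass is where the real care is needed. A secondary, more routine point is to make sure the cited extinction results apply to the non-compactly-supported datum $\rho(1,\cdot)$, which is taken care of by its super-exponential spatial decay.
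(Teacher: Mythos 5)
Your route is genuinely different from the paper's, and the central new idea is a nice one. The paper proves systematic extinction by splitting into two regimes: when $\tmin>1/2$ it bounds $\rho(\rho-\tb)(1-\rho)$ by an $\ep$-perturbed bistable nonlinearity $\overline{f}^{\ep}$ with $\int_0^{1+\ep}\overline{f}^{\ep}<0$ and quotes Fife--McLeod; when $\tmin=1/2$ the analogous naive majorant $\overline{f}(s)=s(1-s)(s-1/2)\mathbf{1}_{[0,1]}$ has zero integral, its $0$--$1$ connection is a \emph{standing} wave, and the paper invests a substantial construction (a time-decaying perturbation $\overline{p}(t,x)=p(x+s(t))+r(t)$ of the standing wave, plus a monotone-limit argument) to force the dominated mass to zero. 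You sidestep the split entirely by using the density-mass comparison~\eqref{ineg} with $p=2$ to obtain the pointwise lower bound $u\ge(\rho/C_0)^2$, hence $\tb\ge\tmin+K\rho$ with $K>0$; on $[0,1]$ this converts the naive majorant $\rho(\rho-\tmin)(1-\rho)$ into $\rho((1-K)\rho-\tmin)(1-\rho)$, whose integral over $[0,1]$ is $\frac{1-K-2\tmin}{12}\le-\frac{K}{12}<0$ for \emph{all} $\tmin\ge1/2$. The algebra ($\rho(\tb-\tmin)=\int_\Theta(\theta-\tmin)u\ge\frac{|\Theta|^2}{2C_0^2}\rho^2$) and the bistability check for $h$ are correct, and the observation that $K$ may be shrunk to guarantee $0<K<1-\tmin$ is legitimate since $\tb\ge\tmin+K\rho$ is preserved under decreasing $K$. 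So you have a single, strictly retracting majorant in all cases, which is a genuine simplification of the hard part of the paper's proof.

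The one place where the argument is not yet complete is the final bistable-extinction step, and the sentence \lq\lq after it has dropped below $1$'' hides the real work. Your comparison datum $v(0,\cdot)=\rho(1,\cdot)$ may exceed $1$ on a set of positive measure (its only \emph{a priori} bound is $\max(M,1)$, and~\eqref{eq:limsup-mass} only gives $\limsup_t\sup_x\rho(t,\cdot)\le1$, not strict inequality at a finite time). Since $h<0$ on $(1,\infty)$ vanishes at $1$, the ODE envelope $\dot y=h(y)$, $y(0)=\sup v_0$, decreases to $1$ without crossing it, so the scalar comparison does \emph{not} yield a finite time at which $v<1$ uniformly; consequently one cannot immediately trap $v$ under translates of the retracting $0$--$1$ front (whose range is $(0,1)$). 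To close this you must do one of two things: either rerun your construction with a slightly raised upper state --- replace the $[0,1]$ branch of $h$ by $s((1-K)s-\tmin)(1+\ep-s)$ and redo the majorization and sign of $\int_0^{1+\ep}$, along the lines of the paper's $\overline{f}^{\ep}$; or add a time-decaying perturbation to the retracting front, $\phi(x-ct+s(t))+r(t)$ with $r(t)=r_0e^{-\gamma t}$, and verify it is a supersolution of $v_t=dv_{xx}+h(v)$ that initially dominates $\rho(1,\cdot)$, exactly as the paper does for the standing-wave case $\tmin=1/2$ (and which is actually easier here since $c<0$). Either fix is routine but necessary; without it the claim that $\|v(s,\cdot)\|_{L^\infty}\to0$ does not follow from the cited references. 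The remaining point you flag --- that $\rho(1,\cdot)$ is not compactly supported --- is indeed harmless because of the Gaussian upper bound you obtain from the comparison with $w_t=dw_{xx}+Cw$ started from compactly supported data.
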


\begin{proof} We begin with the simpler and more telling case
$$\tmin>\frac12.$$ 
For any $\ep> 0$, define
$$\overline{f}^{\ep}(s):=s\,\Big(s-\Big(\frac12 +\ep\Big)\Big)\,(1+\ep-s)$$
and observe that, as $\ep \to 0$, 
\begin{equation*}
\int _0^{1+\ep} \overline{f}^{\ep}(s)ds=-\frac \ep {12}+o(\ep).
\end{equation*}
We then fix $\ep>0$ small enough so that $\tmin>1/2 +2\ep$ and $\int _0^{1+\ep} \overline{f}^{\ep}(s)ds<0$. Consider now any solution $u$ of~\eqref{eq}--\eqref{initial}. From~\eqref{eq:limsup-mass} and the positivity of $\rho$ in $(0,+\infty)\times\R$, there exists $T_\ep>0$ such that $0<\rho(t,x)<1+\ep$ for all $t\geq T_\ep$ and $x\in \R$. Then, recalling that $\overline \theta (t,x)\in(\tmin,\tmax)\subset(1/2+2\ep,1)$, we claim that
\begin{equation*}
\rho(t,x)\,(\rho(t,x) -\overline \theta (t,x))\,(1-\rho(t,x))< \overline{f}^{\ep}(\rho(t,x))\ \ \text{for all }t\geq T_\ep,\ x\in\R.
\end{equation*}
Indeed, for every $(t,x)\in[T_\ep,+\infty)\times\R$, the quadratic polynomial function
$$s\mapsto\overline{f}^\ep(s)-s\,(s-\overline{\theta}(t,x))\,(1-s)$$
vanishes at $s=0$, is positive at $s=1+\ep$, and is negative at large~$s$ since $\tb(t,x)>1/2+2\ep$; hence, this function is positive in $(0,1+\ep]$. From~\eqref{eq-rho} and the comparison principle, it follows that $0<\rho(t,x)\leq \overline \rho(t,x)$ for all $t\ge T_\ep$ and $x\in\R$, where~$\overline \rho=\overline \rho (t,x)$ is the solution of the bistable Cauchy problem
\begin{equation*}
\overline \rho _t= d\,\overline \rho_{xx}+\overline{f}^{\ep}(\overline\rho)\hbox{ in }(T_\ep,+\infty)\times\R, \quad \overline \rho (T_\ep,\cdot)=\rho(T_\ep,\cdot)\hbox{ in }\R.
\end{equation*}
But since $\overline{f}^\ep$ is a bistable function in $[0,1+\ep]$ with negative integral over $[0,1+\ep]$, and since $\overline{\rho}(T_\ep,x)=\rho(T_\ep,x)\to0$ as $x\to\pm\infty$ by~\eqref{zero-infini}, together with $\sup_\R\overline{\rho}(T_\ep,\cdot)=\max_\R\rho(T_\ep,\cdot)<1+\ep$, one concludes from~\cite{Fife_McLeod_19} that~$\overline{\rho}(t,x)\to0$ as~$t\to+\infty$, uniformly in~$x\in\R$. Hence, $\rho(t,x)\to0$ as $t\to+\infty$, uniformly in $x\in\R$.

Now we consider the critical case
$$\tmin=\frac12.$$
In this case, the preceding argument does not work anymore and more care is needed. On the one hand, for any $(t,x)\in(0,+\infty)\times\R$ such that $\rho(t,x)\in[0,1]$, one has
$$\rho(t,x)\,(1-\rho(t,x))\,(\rho(t,x)-\tb(t,x))\leq\rho(t,x)\,(1-\rho(t,x))\,\Big(\rho(t,x)-\frac12\Big)$$
since $\tb(t,x)>\tmin=1/2$. On the other hand, for any $(t,x)\in(0,+\infty)\times\R$ such that~$\rho(t,x)>1$, one has (recall $\tb(t,x)<\tmax<1$)
$$\rho(t,x)(1-\rho(t,x))(\rho(t,x)-\tb(t,x))\leq -(1-\tb(t,x))(\rho(t,x)-1)\leq -(1-\tmax)(\rho(t,x)-1).$$
Therefore, for any $(t,x)\in(0,+\infty)\times\R$, there holds
$$\rho(t,x)\,(1-\rho(t,x))\,(\rho(t,x)-\tb(t,x))\leq\overline{f}(\rho(t,x)),$$
where 
\[
\overline{f}(s):=s(1-s)\Big(s-\frac12\Big)\mathbf{1}_{[0,1]}(s)-(1-\tmax)(s-1)\mathbf{1}_{(1,+\infty)}(s).
\]
By the comparison principle, one infers that, for any $T\geq 0$, $t\ge0$ and $x\in\R$, there holds
\begin{equation}\label{rhorhob}
\rho(t+T,x)\leq\rhob^T(t,x),
\end{equation}
where $\rhob^T$ denotes the solution of the Cauchy problem
\begin{equation}
\label{Cauchy_problem_theta_min_one_half}
\rhob^T_t=d\,\rhob^T_{xx}+\overline{f}(\rhob^T)\text{ in }(0,+\infty)\times\R,\ \ \ \rhob^T(0,\cdot)=\rho(T,\cdot)\text{ in }\R.
\end{equation}
This equation is a reaction--diffusion equation with a globally Lipschitz-continuous and piecewise smooth reaction term $\overline{f}$, whose derivative $\overline{f}'$ is well-defined except at $1$, where it only has well-defined left-sided and right-sided negative derivatives. For the underlying ordinary differential equation, the steady state $0$ is locally asymptotically stable from above, the steady state $1/2$ is unstable and, even though the flow is not $C^1$ at $1$, the steady state~$1$ is locally asymptotically stable from above and from below. Moreover,~$\int_0^1 \overline{f}=0$. We only need to find $T\geq 0$ such that $\rhob^T$ converges to $0$ as $t\to+\infty$ uniformly in $x\in\R$. Since the reaction term is not completely standard, we briefly recall how such a fact is proved.

Let $p:\R\to(0,1)$ be the standing wave solution of $-d\,p''=\overline{f}(p)$ in $\R$ with $p'<0$ in~$\R$ and limits $1$ and $0$ at $-\infty$ and $+\infty$ respectively. Since $\overline{f}(s)=s(1-s)(s-1/2)$ for~$s\in[0,1]$, the existence and uniqueness (up to shifts) of $p$ is standard. Let $s_0>0$, $r_0>0$ and $\gamma>0$ to be chosen later, and denote $s(t)=s_0\,\textup{e}^{-\gamma t}$ and $r(t)=r_0\,\textup{e}^{-\gamma t}$. The function $\overline{p}(t,x)=p(x+s(t))+r(t)$ defined for $(t,x)\in[0,+\infty)\times\R$ satisfies
\begin{align*}
\overline{p}_t(t,x)-d\,\overline{p}_{xx}(t,x)-\overline{f}(\overline{p}(t,x)) & = p'(x+s(t))s'(t)+r'(t)-d\,p''(x+s(t))-\overline{f}(\overline{p}(t,x)) \\
& = -\gamma\big(r_0+s_0\,p'(x+s(t))\big)\,\textup{e}^{-\gamma t}\\
&\qquad +\frac{\overline{f}(p(x+s(t)))-\overline{f}(p(x+s(t))+r(t))}{r(t)}r(t) \\
& = \textup{e}^{-\gamma t}\left[ -\gamma (r_0+s_0p'(x+s_0 z))-r_0\, G(p(x+s_0 z), r_0 z) \right]
\end{align*}
for all $t\ge0$ and $x\in\R$, where, on the last line above, $z=z(t)=\textup{e}^{-\gamma t}$ and one defines
$$G(q,y) = \frac{\overline{f}(q+y)-\overline{f}(q)}{y}$$
for $q\ge0$ and $y>0$. The function $G$ is extended at $y=0$ by $G(q,0)=\overline{f}'(q)$ if $q\in[0,1)\cup(1,+\infty)$ and by $G(1,0)=\lim_{y\to 0^+}G(1,y)=-(1-\tmax)$. The function $G$ is then continuous in~$[0,+\infty)^2\setminus\{(1,0)\}$ and it is locally bounded in $[0,+\infty)^2$. Now, we claim that we can choose the parameters $s_0, r_0, \gamma$ so that the function 
\[
H:(x,z)\in\R\times(0,1]\mapsto -\gamma (r_0+s_0p'(x+s_0 z)) -r_0 G(p(x+s_0 z), r_0 z)
\]
is nonnegative in $\R\times(0,1]$ (which will imply that $\overline{p}_t\ge d\,\overline{p}_{xx}+\overline{f}(\overline{p})$ in $[0,+\infty)\times\R$). To show the nonnegativity of $H$, let first select $\sigma\in(0,1/2)$ such that $\overline{f}'<0$ in $[0,\sigma]\cup[1-\sigma,1)\cup(1,+\infty)$, and denote 
\[
\Gamma_0=\min_{q\in[0,\sigma/2],\,y\in[0,\sigma/2]}(-G(q,y))>0,
\]
\[
\Gamma_1=\inf_{q\in[1-\sigma,1],\,y\in[0,\sigma]}(-G(q,y))>0,\footnote{One has $\Gamma_1>0$ since $\sup_{[1-\sigma,1)\cup(1,1+\sigma]}\overline{f}'<0$.}
\]
\[
\overline{\Gamma}=\sup_{q\in[0,1],\,y\in[0,\sigma]}|G(q,y)|>0,
\]
and
\[
\kappa = \min_{y\in[p^{-1}(1-\sigma),p^{-1}(\sigma/2)]}(-p'(y))>0.
\]
Let us then observe that:
\begin{itemize}
\item if $p(x+s_0 z)\leq\sigma/2$ and $r_0\leq\sigma/2$, then $H(x,z)\geq r_0(\Gamma_0-\gamma)$ (recall that $-\gamma s_0 p'\geq 0$);
\item if $p(x+s_0 z)\geq 1-\sigma$ and $r_0\leq\sigma$, then similarly $H(x,z)\geq r_0(\Gamma_1-\gamma)$;
\item if $p(x+s_0 z)\in[\sigma/2,1-\sigma]$ and $r_0\leq\sigma$, then $H(x,z)\geq \gamma s_0 \kappa -r_0(\gamma+\overline{\Gamma})$.
\end{itemize}
Taking for instance
$$r_0=\frac{\sigma}{2}>0,\ \ \gamma=\min(\Gamma_0,\Gamma_1)>0,\ \hbox{ and } \ s_0=\frac{r_0(\gamma+\overline{\Gamma})}{\gamma\kappa}>0.$$
the claim is proved, that is, $H\ge0$ in $\R\times(0,1]$. Hence,
$$\overline{p}_t\ge d\,\overline{p}_{xx}+\overline{f}(\overline{p})\ \hbox{ in }[0,+\infty)\times\R.$$
Notice that the same inequality holds by replacing the function $\overline{p}$ by the $x$-reflected one: $(t,x)\mapsto\overline{p}(t,-x)$. 

By virtue of \eqref{zero-infini} and \eqref{eq:limsup-mass}, together with the positivity of $\rho$ in $(0,+\infty)\times\R$, there exist $T^\star\geq 0$ and $x^\star\in\R$ such that $$0<\rho(T^\star,x)\leq \min\left(\overline{p}(0,x-x^\star),\overline{p}(0,-x-x^\star)\right)\ \hbox{ for all $x\in\R$}.$$
From now on, $\rhob=\rhob^{T^\star}$ is the solution of~\eqref{Cauchy_problem_theta_min_one_half} with $T=T^\star$. Due to the preceding calculations and by the comparison principle, the inequality
$$0<\rhob(t,x)\leq \min\left(\overline{p}(t,x-x^\star),\overline{p}(t,-x-x^\star)\right)$$
holds for all $t\geq 0$ and $x\in\R$. Therefore there exists $T_1>0$ such that $\sup_{x\in\R}\rhob(T_1,x)<1$. Now, using standard heat kernel estimates \cite{Fri-64}, we deduce that $\rhob(T_1,x)=O(\textup{e}^{-Cx^2})$ as~$x\to\pm\infty$ for some constant $C>0$. Since $p(x)\sim C'\textup{e}^{-x/\sqrt{2d}}$ as $x\to+\infty$, for some constant $C'>0$, there exists $x_1\in\R$ such that
$$0<\rhob(T_1,x)\leq \min(p(x-x_1),p(-x-x_1))<1\ \hbox{ for all }x\in\R.$$ 
Since $\min(p(x-x_1),p(-x-x_1))$ is a super-solution of the stationary elliptic equation~$-d\,q''=\overline{f}(q)$, the solution $\hat{\rho}$ of
\begin{equation*}
    \begin{cases}
    \hat{\rho}_t =d\,\hat{\rho}_{xx}+\overline{f}(\hat{\rho}) & \text{in }(T_1,+\infty)\times\R,\\
    \hat{\rho}(T_1,x)=\min(p(x-x_1),p(-x-x_1)) & \text{for all }x\in\R,
    \end{cases}
\end{equation*}
is nonincreasing in time (and even decreasing) in $[T_1,+\infty)\times\R$. Hence $\hat{\rho}(t,\cdot)$ converges as~$t\to+\infty$ in $\mathcal{C}^2_\textup{loc}(\R)$, by standard parabolic estimates, to a $C^2(\R)$ solution $q:\R\to[0,1]$ of $-d\,q''=\overline{f}(q)$ in $\R$ with limit $0$ at $\pm\infty$. Since $\overline{f}(s)=s(1-s)(s-1/2)$ in $[0,1]$ and since it is well-known that the unique nonnegative solution of the equation $-d\,q''=q(1-q)(q-1/2)$ with limit~$0$ at $\pm\infty$ is $q\equiv0$, the long-time limit of $\hat{\rho}$ is identically~$0$. Using again the stationary super-solution $\min(p(x-x_1),p(-x-x_1))$ and the locally uniform convergence, it turns out that the convergence of $\hat{\rho}(t,\cdot)$ to $0$ is uniform in~$\R$ as~$t\to+\infty$.

Finally, by the comparison principle, $0<\rhob\leq\hat{\rho}$ in $[T_1,+\infty)\times\R$. Therefore $\rhob$ converges uniformly in space to $0$ as $t\to+\infty$, and then so does $\rho$ by~\eqref{rhorhob}. The proof of Theorem~\ref{thm:syst-ext} is thereby complete.
\end{proof}

\subsection{$\lambda_\alpha\le0$ makes extinction impossible}\label{sec:imposs_extin}

As claimed at the beginning of Section~\ref{s:ext-pers}, the sign of the principal eigenvalue $\lambda_\alpha$ of~\eqref{vp} decides between systematic persistence and possible extinctions. Notice that, in typical Fisher-KPP situations, all solutions go extinct as soon as $\lambda_\alpha \geq 0$. Our results on equation~\eqref{eq} are in sharp contrast: first the critical case $\lambda_\alpha=0$ implies persistence (in some sense, it corresponds to a degenerate monostable situation for which the hair trigger effect \cite{AW-78} does hold); next the case $\lambda_\alpha >0$ leads to both possible extinction and possible persistence, see Theorems~\ref{th:ext2} and~\ref{thm:poss_persis_subcrit} in Sections~\ref{sec:poss_extin} and~\ref{sec:poss_persis} below (in some sense, the situation is similar to that of a local bistable reaction-diffusion equation). The reason is that, as explained in Section~\ref{sec1}, the underlying nature of model~\eqref{eq} may vary from Fisher-KPP to bistable, not to mention degenerate monostable.

We deal in the present Section~\ref{sec:imposs_extin} with the systematic persistence when $\lambda_\alpha\le0$ (lines~2 and~3 of Table~\ref{table:summary}).

\begin{thm}[Systematic persistence when $\lambda_\alpha\le0$]\label{th:rescue}
Let $\lambda_\alpha$ be the principal eigenvalue of the eigenproblem~\eqref{vp}--\eqref{rayleigh}. If $\lambda_\alpha\leq 0$, then every solution~$u$ of the Cauchy problem~\eqref{eq}--\eqref{initial} persists. Furthermore, if $\lambda_\alpha<0$, then
\begin{equation}\label{liminf}
\inf_{(x,\theta)\in\R\times\overline{\Theta}}\Big(\liminf_{t\to+\infty}u(t,x,\theta)\Big)>0\ \hbox{ and }\ \inf_{x\in\R}\Big(\liminf_{t\to+\infty}\rho(t,x)\Big)>0.
\end{equation}
Lastly, if $\tmax\le0$, then $\lambda_\alpha<0$ and
\begin{equation}\label{limits2}
\left\{\begin{array}{llll}
\rho(t,\cdot) & \!\!\!\longrightarrow\! & 1 & \hbox{as $t\to+\infty$ locally uniformly in $\R$},\vspace{3pt}\\
u(t,\cdot,\cdot) & \!\!\!\longrightarrow\! & \displaystyle\frac{1}{\tmax-\tmin} & \hbox{as $t\to+\infty$ locally uniformly in $\R\times\overline{\Theta}$}.\end{array}\right.
\end{equation}
\end{thm}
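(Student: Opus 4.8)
The plan is to build everything on a projection of the density equation onto the principal eigenfunction $\varphi_\alpha$ of~\eqref{vp}. Setting $v(t,x):=\int_\Theta u(t,x,\theta)\,\varphi_\alpha(\theta)\,\textup{d}\theta$, multiplying~\eqref{eq} by $\varphi_\alpha$, integrating over $\Theta$ and using the no-flux conditions for both $u$ and $\varphi_\alpha$ together with $\alpha\varphi_\alpha''=(\theta-\lambda_\alpha)\varphi_\alpha$, one gets
\[
v_t=dv_{xx}-\lambda_\alpha v+\rho(1-\rho)\,v+\rho\int_\Theta u(t,x,\theta)\,\theta\,\varphi_\alpha(\theta)\,\textup{d}\theta\qquad\text{in }(0,+\infty)\times\R .
\]
Since $0<\varphi_{\min}:=\min_{\overline\Theta}\varphi_\alpha\le\varphi_\alpha\le1$, one has $\varphi_{\min}\rho\le v\le\rho$; moreover $\tmin\le\theta\le\tmax$ and~\eqref{eq:limsup-mass} show that $v$ solves a linear parabolic equation $v_t=dv_{xx}+c(t,x)v$ with $c$ bounded for large $t$, so $v$ obeys the parabolic Harnack inequality, as does $\rho$ by~\eqref{boundf}.

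\textbf{Persistence when $\lambda_\alpha\le0$.} I would argue by contradiction, assuming $\sup_x\rho(t,x)\to0$. Bounding $\int_\Theta u\theta\varphi_\alpha\,\textup{d}\theta\ge\tmin v$ in the identity above, there are $\delta_0\in(0,1]$ and $T\ge0$ such that, for $t\ge T$ (when $\rho<\delta_0$, hence $v<\delta_0$, everywhere), $v_t\ge dv_{xx}+(-\lambda_\alpha+\rho(1-\rho+\tmin))v$. If $\lambda_\alpha<0$, choosing $\delta_0$ small (so that $\delta_0(1+\theta_m)\le-\lambda_\alpha/2$) gives $v_t\ge dv_{xx}+(-\lambda_\alpha/2)v$ on $[T,+\infty)\times\R$; comparing $v$ with the solution of the linear equation $w_t=dw_{xx}+(-\lambda_\alpha/2)w$ issued from a nontrivial, compactly supported datum below $v(T,\cdot)$ forces $v(t,x)\to+\infty$, contradicting boundedness. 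If $\lambda_\alpha=0$, then $\tmin\in(-1,0)$ by Lemma~\ref{lem:vp}$(i)$ and $\tmax<1$, so for $\delta_0\le(1+\tmin)/2$ and using $\rho\ge v$ one gets $v_t\ge dv_{xx}+c_0v^2$ on $[T,+\infty)\times\R$ with $c_0=(1+\tmin)/2>0$; hence $v$ is a supersolution there of $w_t=dw_{xx}+g(w)$ for a monostable $g$ with $g(w)\le c_0w^2$ on $[0,\delta_0]$ and $g(1)=0$, and the hair-trigger effect~\cite{AW-78} (the exponent $1+p=2$ being admissible in dimension one, where the threshold is $p<2$) forces the corresponding solution, hence $v$, to approach $1$, again contradicting $v\le\rho\to0$. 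In both cases $\limsup_t\sup_x\rho(t,x)>0$, which by~\eqref{ineg} is persistence of $u$.

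\textbf{Uniform persistence~\eqref{liminf} when $\lambda_\alpha<0$.} Here $-\lambda_\alpha>0$, and the key point is that \emph{wherever} $\rho(t,x)\le\delta_0$ one has the \emph{linear} inequality $v_t\ge dv_{xx}+(-\lambda_\alpha/2)v$. I would exploit this with the Harnack inequality and a localized growing subsolution: fix $R$ so large that $d\pi^2/(4R^2)<-\lambda_\alpha/2$ and set $\mu_R:=-\lambda_\alpha/2-d\pi^2/(4R^2)>0$; then $(t,y)\mapsto\eta\,e^{\mu_R(t-t_0)}\cos\!\big(\tfrac{\pi(y-x_0)}{2R}\big)$, extended by $0$ outside $(x_0-R,x_0+R)$, solves $w_t=dw_{xx}+(-\lambda_\alpha/2)w$ on that strip and grows exponentially. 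Given (from persistence) a point $x_0$ and an arbitrarily large $t_0$ with $\rho(t_0,x_0)\ge\delta_0/2$, the Harnack inequality for $v$ provides a lower bound $v(t_0+1,\cdot)\ge\eta_0$ on $(x_0-R,x_0+R)$ with $\eta_0>0$ \emph{independent of} $(t_0,x_0)$; comparing $v$ with the above subsolution for as long as $\rho\le\delta_0$ on the strip shows that, within a \emph{uniform} time, $\rho$ either reaches $\delta_0$ somewhere on the strip or exceeds $\delta_0/2$ on $(x_0-R/2,x_0+R/2)$. Iterating, the set $\{x:\rho(t,x)\ge\delta_0/2\}$ is nonempty for all large $t$ and spreads so as to meet every interval of length $2R$; a final Harnack estimate for $\rho$ then yields a uniform-in-$x$ positive lower bound, and~\eqref{ineg} transfers it to $u$, giving~\eqref{liminf}. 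I expect this propagation step to be the main obstacle: the comparison principle fails for the full nonlocal system, and the favourable inequality for $v$ holds only in $\{\rho\le\delta_0\}$, so one must carefully patch the ``low-density'' zones, where the instability $-\lambda_\alpha>0$ drives $v$ upwards, with the ``high-density'' zones, where persistence is automatic but no comparison is available — this is precisely the mechanism absent when $\lambda_\alpha=0$, which is why only plain persistence is claimed there.

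\textbf{The case $\tmax\le0$.} I would first note that $\tmin+\tmax<2\tmax\le0$, so $\lambda_\alpha<(\tmin+\tmax)/2<0$ by Lemma~\ref{lem:vp}$(i)$, and the previous steps apply. For $\rho\to1$: since $\tb(t,x)<\tmax\le0$, one has $\rho(\rho-\tb)(1-\rho)\ge\rho(\rho-\tmax)(1-\rho)=:g(\rho)\ge0$ for $\rho\in[0,1]$, with $g$ monostable on $[0,1]$ (nondegenerate, $g'(0)=-\tmax>0$, if $\tmax<0$; mildly degenerate, $g(\rho)\sim\rho^2$, if $\tmax=0$). Let $\underline\rho$ solve $\underline\rho_t=d\underline\rho_{xx}+g(\underline\rho)$ from a nontrivial compactly supported datum $\le\min(\rho(0,\cdot),1)$; since $\underline\rho\le1$ cannot meet $\rho$ from below while $\rho\le1$ and $\rho$ is a supersolution of the $g$-equation wherever $\rho\le1$, the comparison principle gives $\underline\rho\le\rho$, while $\underline\rho(t,\cdot)\to1$ locally uniformly by the hair-trigger effect~\cite{AW-78}; with~\eqref{eq:limsup-mass} this yields $\rho(t,\cdot)\to1$ locally uniformly. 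Finally, once $\rho$ is close to $1$ on a ball, $|(\rho-\theta)(1-\rho)|\le(1+\theta_m+\rho)\,|1-\rho|$ is small there, so $w:=u-\rho/|\Theta|$ (which has zero $\theta$-average) solves $w_t=dw_{xx}+\alpha w_{\theta\theta}+f$ with $f\to0$ locally uniformly and no-flux conditions in $\theta$; the spectral gap $\alpha\pi^2/|\Theta|^2>0$ of $-\alpha\partial_{\theta\theta}$ with Neumann conditions on the zero-mean subspace, together with parabolic estimates, forces $w\to0$ locally uniformly, i.e.\ $u(t,\cdot,\cdot)\to\rho(t,\cdot)/|\Theta|\to1/(\tmax-\tmin)$, which is~\eqref{limits2}.
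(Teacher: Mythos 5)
Your persistence argument (first paragraph) is correct and takes a genuinely different route from the paper's. You reduce everything to a scalar PDE for the projection $v:=\int_\Theta u\varphi_\alpha\,\textup{d}\theta$, whereas the paper works directly with $u$: it uses the pointwise comparison \eqref{ineg0} to bound the reaction term by $-\theta u - Ku^{1+1/p}$ and then plays the stationary compactly supported subsolution $\ep\sin(\pi(x+R)/(2R))\varphi_\alpha(\theta)$ against the local operator $\mathcal{L}u=u_t-du_{xx}-\alpha u_{\theta\theta}+\theta u+Ku^{1+1/p}$ (and, in the $\lambda_\alpha=0$ case, the separated subsolution $\ep^{-1}v(t,x)\varphi_\alpha(\theta)$ with $v_t-dv_{xx}=\nu v^{p+1}/C_{p,1}$, concluded by Fujita blow-up). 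Your route is arguably slicker for plain persistence: once the exact identity $v_t=dv_{xx}-\lambda_\alpha v+\rho(1-\rho)v+\rho\int_\Theta u\theta\varphi_\alpha$ is derived, the contradiction is almost immediate. One slight slip: the Aronson--Weinberger threshold in one dimension is $p\le 2$ (i.e.\ $1+p\le3$), not $p<2$; this does not affect your argument since $1+p=2$.

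For the uniform lower bound \eqref{liminf}, however, your sketch has a genuine gap, which you honestly flag. The favourable linear inequality $v_t\ge dv_{xx}+(-\lambda_\alpha/2)v$ holds only where $\rho\le\delta_0$, and your iteration scheme (``within a uniform time, $\rho$ reaches $\delta_0$ on the strip or exceeds $\delta_0/2$ on the half-strip; iterating, the set spreads'') does not establish that the hot spot ever reaches, let alone stays near, a \emph{fixed} $y\in\R$: the escape point is uncontrolled and the set where $\rho\ge\delta_0/2$ is only shown to recur near some moving location. The paper avoids this by making the subsolution work \emph{unconditionally}: the bound $\rho\le C_{p,\tau}u^{1/p}$ yields a reaction lower bound $-\theta u-Ku^{1+1/p}$ valid for all values of $\rho$, so the stationary subsolution $\ep\sin(\cdot)\varphi_\alpha$ is a subsolution on the whole strip for all $t\ge1$; one then passes to entire-in-time limits, applies Harnack, and runs a sliding argument to push the subsolution under $u_\infty$ centered at any $x_0$. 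You could repair your proof in the same spirit \emph{within} the $v$-framework: from \eqref{ineg} one gets $u\ge\rho^p/C_{p,\tau}^p$, hence $v\ge\varphi_{\min}|\Theta|\rho^p/C_{p,\tau}^p$, i.e.\ $\rho\le C'v^{1/p}$, which turns your identity into the unconditional inequality $v_t\ge dv_{xx}-\lambda_\alpha v-C''v^{1+1/p}$; a stationary subsolution $\ep\sin(\pi(x-x_0+R)/(2R))$ with $d\pi^2/(4R^2)+\lambda_\alpha+C''\ep^{1/p}\le0$ then works for all $t$, and the paper's limiting/Harnack/sliding machinery carries over verbatim.

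For $\tmax\le0$, your derivation that $\lambda_\alpha<0$ is correct and your route to $\rho\to1$ (comparison with the cubic $g(\rho)=\rho(\rho-\tmax)(1-\rho)$ plus \eqref{eq:limsup-mass}) and then to the convergence of $u$ (spectral gap of $-\alpha\partial_{\theta\theta}$ on zero-mean functions driving $w=u-\rho/|\Theta|\to0$) is sound, though presented a bit informally: the comparison ``$\underline\rho$ cannot meet $\rho$ while $\rho\le1$'' should be spelled out, since $\rho$ is a supersolution of the $g$-equation only on $\{\rho\le1\}$ (one checks that any touching point has $\rho=\underline\rho\le1$, then linearizes $g$ on $\{\underline\rho>\rho\}\subset\{\rho\le1\}$). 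The paper instead takes entire-in-time limits, shows $\rho_\infty\equiv1$ by an ODE comparison $\rho_\infty'\ge\rho_\infty^2(1-\rho_\infty)$, and obtains the constancy of $u_\infty$ by a Liouville argument; both approaches are reasonable, and yours isolates the spectral-gap mechanism nicely.
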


\begin{proof} {\it Step 1: persistence in the case $\lambda_\alpha<0$}. Let us first assume that $\lambda_\alpha<0$, and consider a solution~$u$ of the Cauchy problem~\eqref{eq}--\eqref{initial}. From~\eqref{eq:max_rho} and~\eqref{ineg0} (with here~$T^*=+\infty$, any~$p>1$ and, say, $\tau=1$) the nonlinear term in \eqref{eq} satisfies, for times~$t\geq 1$,
\begin{equation}
\label{estimate-reaction-term}
u(\rho -\theta)(1-\rho)=-\theta u  +\theta \rho u +u\rho(1-\rho) \geq -\theta u -K u^{1+1/p},
\end{equation}
for some positive constant $K$ depending on $\tmin$, $\tmax$, $C_{p,1}$ and $M$. As a result, we can compare the nonlocal problem~\eqref{eq} with a local problem: namely,  $u=u(t,x,\theta)$ satisfies
\begin{equation}
\label{u:sur-sol}
\mathcal L u:=u_t-d u_{xx}-\alpha u_{\theta \theta}+\theta u+K u^{1+1/p}\geq 0\ \hbox{ for all }t\geq 1\hbox{ and }(x,\theta)\in\R\times\overline{\Theta}.
\end{equation}
Remember also that $u_\theta(t,x,\tmin)= u_\theta(t,x,\tmax)=0$ for all $t\geq 1$ and $x\in \R$, and that~$u(1,x,\theta)>0$ for all $x\in \R$ and $\theta\in\overline{\Theta}$ from Section~\ref{sec21}. Now, for $R>0$ and $\ep>0$, consider the compactly supported continuous function $w$ defined in $[-R,R]\times\overline{\Theta}$ by
\begin{equation}
\label{subsol-compactly}
w(x,\theta):=\ep \sin\Big(\frac{\pi}{2R}(x+R)\Big)\,\varphi_\alpha(\theta),
\end{equation}
where $\varphi_\alpha$ is the unique solution of~\eqref{vp} such that $\max_{\overline{\Theta}}\varphi_\alpha=1$, with $\lambda=\lambda_\alpha$ given in~\eqref{rayleigh}. A straightforward computation shows that $\mathcal L w\leq 0$ in $[-R,R]\times\overline{\Theta}$ as soon as 
\begin{equation}\label{goal}
\frac{d\pi^2}{4R^2}+\lambda_\alpha+K\ep^{1/p}\leq 0.
\end{equation}
Due to the negativity of $\lambda_\alpha$, the above inequality is true by selecting $R=R_0>0$ large enough (so that $d\pi^2/(4R_0^2)+\lambda_\alpha<0$) and then $\ep=\ep_0>0$ small enough. Moreover, up to reducing~$\ep_0>0$, we have $w(x,\theta)\leq u(1,x,\theta)$ in $[-R_0,R_0]\times\overline{\Theta}$. Observe also that~$w(\pm R_0,\theta)=0<u(t,\pm R_0,\theta)$ for all $t\ge1$ and $\theta\in\overline{\Theta}$, and that $w_\theta(x,\tmin)=w_\theta(x,\tmax)=0$ for all $x\in[-R_0,R_0]$. We therefore deduce from the comparison principle that 
$$w(x,\theta)\le u(t,x,\theta)\ \hbox{ for all }t\ge1\hbox{ and }(x,\theta)\in[-R_0,R_0]\times\overline{\Theta},$$
and thus $u$ cannot go extinct, that is, persistence necessarily occurs.

Les us  now show the stronger property~\eqref{liminf}. Consider any sequence $(t_n)_{n\in\mathbb{N}}$ of positive real numbers diverging to $+\infty$. From standard parabolic estimates, the functions
$$u_n:(t,x,\theta)\mapsto u_n(t,x,\theta):=u(t+t_n,x,\theta)\ \hbox{ and }\ \rho_n:(t,x)\mapsto\rho_n(t,x):=\rho(t+t_n,x)$$
converge up to extraction of a subsequence, in $\mathcal{C}^{1;2}_{t;(x,\theta);\textup{loc}}(\R\times\R\times\overline{\Theta})$ and in $\mathcal{C}^{1;2}_{t;x;\textup{loc}}(\R\times\R)$ respectively, to some nonnegative bounded functions $u_\infty\in\mathcal{C}^{1;2}_{t;(x,\theta)}(\R\times\R\times\overline{\Theta})$ and $\rho_\infty=\int_\Theta u_\infty(\cdot,\cdot,\theta)\,\textup{d}\theta\,\in\mathcal{C}^{1;2}_{t;x}(\R\times\R)$ solving~\eqref{eq}--\eqref{neumann} with $t\in\R$. Furthermore, with the same choice of parameters $(\ep_0,R_0)$ as in the previous paragraph, one has
\begin{equation}\label{inf1}
\inf_{t\in\R}\Big(\min_{\overline{\Theta}}u_\infty(t,0,\cdot)\Big)\ge\min_{\overline{\Theta}}w(0,\cdot)=\ep_0\min_{\overline{\Theta}}\varphi_\alpha>0.
\end{equation}
Since the function $(t,x,\theta)\mapsto(\rho_\infty(t,x)-\theta)(1-\rho_\infty(t,x))$ is globally bounded in $\R\times\R\times\overline{\Theta}$, the Harnack inequality yields, for each compact set $\mathcal{K}\subset\R\times\overline{\Theta}$, the existence of a constant~$\mu>0$ such that $u_\infty(t+1,x,\theta)\ge\mu\,u_\infty(t,0,(\tmin+\tmax)/2)$ for all~$t\in\R$ and~$(x,\theta)\in\mathcal{K}$. Hence, together with~\eqref{inf1}, one gets that
\begin{equation}\label{infK}
\inf_{t\in\R}\Big(\min_{(x,\theta)\in\mathcal{K}}u_\infty(t,x,\theta)\Big)>0
\end{equation}
for each compact set $\mathcal{K}\subset\R\times\overline{\Theta}$. Consider now any $x_0\in\R$, and define
$$w^{x_0}(x,\theta)=\sin\Big(\frac{\pi}{2R_0}\,(x-x_0+R_0)\Big)\,\varphi_\alpha(\theta)$$
for all $(x,\theta)\in[x_0-R_0,x_0+R_0]\times\overline{\Theta}$, with $R_0>0$ (and $\ep_0>0$) satisfying~\eqref{goal}. From~\eqref{infK} with $\mathcal{K}=[x_0-R_0,x_0+R_0]\times\overline{\Theta}$, the quantity
$$\ep^*=\sup\big\{\ep\in[0,\ep_0]:\ep\,w^{x_0}\le u_\infty\hbox{ in }\R\times[x_0-R_0,x_0+R_0]\times\overline{\Theta}\big\}$$
is a positive real number, that is $0<\ep^*\le\ep_0$. We claim that
$$\ep^*=\ep_0.$$
Assume not. Then $\ep^*<\ep_0$ and, using~\eqref{infK} again, there exist a point
$$(x^*,\theta^*)\in(x_0-R_0,x_0+R_0)\times\overline{\Theta}$$ 
and a sequence $(t'_n)_{n\in\mathbb{N}}$ in $\R$ such that the functions
$$(t,x,\theta)\mapsto u_\infty(t+t'_n,x,\theta)\ \hbox{ and }\ (t,x)\mapsto\rho_\infty(t+t'_n,x)$$
converge in $\mathcal{C}^{1;2}_{t;(x,\theta);\textup{loc}}(\R\times\R\times\overline{\Theta})$ and in $\mathcal{C}^{1;2}_{t;x;\textup{loc}}(\R\times\R)$ respectively, to some nonnegative bounded functions $U_\infty\in\mathcal{C}^{1;2}_{t;(x,\theta)}(\R\times\R\times\overline{\Theta})$ and $\varrho_\infty=\int_\Theta U_\infty(\cdot,\cdot,\theta)\,\textup{d}\theta\,\in\mathcal{C}^{1;2}_{t;x}(\R\times\R)$ solving~\eqref{eq}--\eqref{neumann} with $t\in\R$, and such that $\ep^*w^{x_0}\le U_\infty$ in $\R\times[x_0-R_0,x_0+R_0]\times\overline{\Theta}$ with equality at $(0,x^*,\theta^*)$. But since $U_\infty$ satisfies~\eqref{u:sur-sol} in $\R\times\R\times\overline{\Theta}$ (it is a super-solution), whereas $\ep^*w^{x_0}$ is a (stationary) sub-solution in $\R\times[x_0-R_0,x_0+R_0]\times\overline{\Theta}$ (from the choice $R_0$ and $\ep_0$), the strong parabolic maximum principle and the Hopf lemma imply that $\ep^*w^{x_0}\equiv U_\infty$ in $(-\infty,0]\times[x_0-R_0,x_0+R_0]\times\overline{\Theta}$, which is impossible on $(-\infty,0]\times\{x_0\pm R_0\}\times\overline{\Theta}$. Therefore, $\ep^*=\ep_0$, and $\ep_0w^{x_0}\le u_\infty$ in $\R\times[x_0-R_0,x_0+R_0]\times\overline{\Theta}$. In particular, one infers that $u_\infty(t,x_0,\theta)\ge\ep_0\min_{\overline{\Theta}}\varphi_\alpha$ for all $t\in\R$ and $\theta\in\overline{\Theta}$. Since $x_0$ was arbitrary in $\R$, one concludes that
$$\inf_{\R\times\R\times\overline{\Theta}}u_\infty\ge\ep_0\min_{\overline\Theta}\varphi_\alpha>0.$$
Since the sequence $(t_n)_{n\in\mathbb{N}}$ diverging to $+\infty$ was arbitrary, one finally gets~\eqref{liminf} for the function $u$, and then also for the mass $\rho$ by definition of $\rho$ and the local uniform convergence of the functions $u_n$ as $n\to+\infty$.

To complete this step 1, let us show the long-time behavior~\eqref{limits2} in the case $\tmax\le0$. First of all, Lemma~\ref{lem:vp} implies that $\lambda_\alpha<0$ in this case. With the same notations~$(t_n)_{n\in\mathbb{N}}$ and $(u_\infty,\rho_\infty)$ as in the previous paragraph, there is $\eta>0$ such that $\rho_\infty\ge\eta$ in~$\R\times\R$ (without loss of generality, one can assume that $0<\eta<1$). Furthermore, $\rho_\infty\le1$ in~$\R\times\R$ by~\eqref{eq:limsup-mass}. On the other hand, as in~\eqref{eq-rho} in Section~\ref{sec21}, the function $\rho_\infty$ obeys
$$(\rho_\infty)_t=d\,(\rho_\infty)_{xx}+\rho_\infty\,(\rho_\infty-\tb_\infty)\,(1-\rho_\infty)\ \hbox{ in }\R\times\R,$$
with
$$\tb_\infty(t,x)=\frac{1}{\rho_\infty(t,x)}\int_\Theta\theta\,u_\infty(t,x,\theta)\,\textup{d}\theta\,\in(\tmin,\tmax)$$
for all $(t,x)\in\R\times\R$. Since $\rho_\infty(1-\rho_\infty)\ge0$ in $\R\times\R$ and $\tmax\le0$, one gets that $-\rho_\infty\,\tb_\infty\,(1-\rho_\infty)\ge0$ in $\R\times\R$, hence
$$(\rho_\infty)_t\ge d\,(\rho_\infty)_{xx}+\rho_\infty^2(1-\rho_\infty)\ \hbox{ in $\R\times\R$}.$$
Let $\zeta:\R\to\R$ be the solution of $\zeta'(s)=\zeta(s)^2(1-\zeta(s))$ with $\zeta(0)=\eta\in(0,1)$. Notice that~$\zeta(s)\to1$ as $s\to+\infty$. The maximum principle implies that, for any real numbers~$t_0<t$, one has $\rho_\infty(t,\cdot)\ge\zeta(t-t_0)$ in $\R$. The passage to the limit as $t_0\to-\infty$ implies that $\rho_\infty(t,\cdot)\ge1$ for every $t\in\R$, and finally $\rho_\infty\equiv1$ in $\R\times\R$. From the equation~\eqref{eq} satisfied by the pair $(u_\infty,\rho_\infty)$ with $t\in\R$, one then gets that the bounded function $u_\infty$ satisfies the linear heat-like equation $(u_\infty)_t=d\,(u_\infty)_{xx}+\alpha\,(u_\infty)_{\theta\theta}$ in $\R\times\R\times\overline{\Theta}$ with Neumann boundary conditions on $\R\times\R\times\partial\Theta$. Therefore, it is standard to conclude that $u_\infty$ is then constant in $\R\times\R\times\overline{\Theta}$.\footnote{To get this property, notice first that, from standard parabolic estimates, the function $u_\infty$ is of class~$\mathcal{C}^\infty(\R\times\R\times\overline{\Theta})$ with bounded derivatives at any order. The function $(u_\infty)_\theta$ satisfies the same equation as~$u_\infty$, but with homogeneous Dirichlet boundary condition on $\R\times\R\times\partial\Theta$. If $M_\infty:=\sup_{\R\times\R\times\overline{\Theta}}(u_\infty)_\theta>0$, then there is a sequence $(t_n,x_n,\theta_n)_{n\in\N}$ in $\R\times\R\times\overline{\Theta}$ such that the functions $(u_\infty)_\theta(\cdot+t_n,\cdot+x_n,\cdot)$ converge in $\mathcal{C}^{1;2}_{t;(x,\theta);loc}(\R\times\R\times\overline{\Theta})$ to a bounded solution $v$ of the same equation, with Dirichlet boundary condition on~$\R\times\R\times\partial\Theta$, and $v(0,0,\theta_\infty)=M_\infty=\sup_{\R\times\R\times\overline{\Theta}}v>0$ for some $\theta_\infty\in\overline{\Theta}$. This contradicts the strong parabolic maximum principle and Hopf lemma. Therefore, $(u_\infty)_\theta\le0$ in $\R\times\R\times\overline{\Theta}$, and similarly $(u_\infty)_\theta\ge0$ in $\R\times\R\times\overline{\Theta}$. Finally, the function $u_\infty$ does not depend on $\theta$ and it is a bounded entire solution of the heat equation $(u_\infty)_t=d(u_\infty)_{xx}$ in $\R\times\R$. It is then well known that it must be constant.} Since $\rho_\infty\equiv1$ in $\R\times\R$, one then infers that~$u_\infty\equiv1/(\tmax-\tmin)$ in $\R\times\R\times\overline{\Theta}$. Finally, the limits $(u_\infty,\rho_\infty)$ do not depend on the original sequence $(t_n)_{n\in\N}$ nor on any subsequence, and~\eqref{limits2} follows.

\medskip
\noindent{\it Step 2: persistence in the case $\lambda_\alpha=0$}. By Lemma~\ref{lem:vp}, one has $0=\lambda_\alpha<(\tmin+\tmax)/2$, so that necessarily $\tmin>-\tmax>-1$. Then we define
$$\nu=\frac{\tmin+1}{2}>0$$
and, without loss of generality, we assume the existence of a real number~$t_0\geq 1$ such that $\sup_{\R}\rho(t_0,\cdot)<\nu$ (otherwise, we would have $\liminf_{t\to+\infty}\sup_\R\rho(t,\cdot)\ge\nu>0$ and we would have obtained the desired result). Define
\begin{equation*}
T=\sup\Big\{ t\geq t_0 :\forall \tau\in[t_0,t),\ \sup_{\R}\rho(\tau,\cdot)\leq \nu \Big\}.
\end{equation*}
By continuity of $\rho$ with respect to $t$ in the sense of the uniform topology in $x\in\R$ (by Proposition~\ref{prop:well-posedness}), one knows that $T>t_0$. Let us then prove that $T<+\infty$, which will end the proof, from the arbitrariness of $t_0\ge1$ with $\sup_\R\rho(t_0,\cdot)<\nu$.

To show that $T<+\infty$, notice first that
$$u_t-d u_{xx}-\alpha u_{\theta\theta} +\theta u= \theta\rho u+\rho u(1-\rho)= (\theta+1-\nu)\rho u + \rho u(\nu-\rho)$$
in $(0,+\infty)\times\R\times\overline{\Theta}$. Since $\theta+1-\nu=(1+\theta)/2+(\theta-\tmin)/2\geq \nu>0$, we deduce directly from~\eqref{ineg0} (with, say, $\tau=1$ and any $p>1$) and the positivity of $u$ and $\rho$, that
$$u_t-du_{xx}-\alpha u_{\theta\theta}+\theta u=(\theta+1-\nu)\rho u+\rho u(\nu-\rho)\geq \frac{\nu}{C_{p,1}}\,u^{p+1}$$
for all $t\in[t_0,T)$ and $(x,\theta)\in\R\times\overline{\Theta}$. Let now $\ep>0$ and let $v=v(t,x)$ be the solution of 
\begin{equation*}
    \begin{cases}
    \displaystyle v_t-d v_{xx}=\frac{\nu}{C_{p,1}}\,v^{p+1} & \text{in }(t_0,t_1)\times\R, \\
    \displaystyle v(t_0,x)=\ep\times\Big(\min_{\overline{\Theta}}u(t_0,x,\cdot)\Big) & \text{for }x\in\R,
    \end{cases}
\end{equation*}
with maximal existence time-interval $[t_0,t_1)$ with $t_0<t_1\le+\infty$. From Section~\ref{sec21}, one knows that $v(t_0,x)>0$ for each $x\in\R$. Then, define
$$\underline{u}(t,x,\theta)=\ep^{-1} v(t,x)\,\varphi_\alpha(\theta)$$
for $(t,x,\theta)\in[t_0,t_1)\times\R\times\overline{\Theta}$. By construction,
\begin{equation*}
    \underline{u}_t-d \underline{u}_{xx}-\alpha \underline{u}_{\theta\theta} +\theta \underline{u}(t,x,\theta)-\frac{\nu}{C_{p,1}}\,\underline{u}(t,x,\theta)^{p+1} =\frac{\nu}{\ep C_{p,1}}\,\varphi_{\alpha}(\theta)\,v(t,x)^{p+1}\left[ 1-\ep^{-p}\varphi_\alpha(\theta)^p \right]
\end{equation*}
in $(t_0,t_1)\times\R\times\overline{\Theta}$. Up to decreasing the value of $\ep$, we can assume that $1-\ep^{-p}\varphi_\alpha(\theta)^p\leq 0$ for all $\theta\in\overline{\Theta}$ (more precisely, it suffices to assume that $0<\ep\leq \min_{\overline{\Theta}}\varphi_\alpha$), so that the right-hand side above is nonpositive. Moreover, 
$$\underline{u}(t_0,x,\theta)=\ep^{-1} v(t_0,x)\,\varphi_\alpha(\theta)=\Big(\min_{\overline{\Theta}}u(t_0,x,\cdot)\Big)\times\varphi_\alpha(\theta)\leq \min_{\overline{\Theta}}u(t_0,x,\cdot)\leq u(t_0,x,\theta)$$
for all $(x,\theta)\in\R\times\overline{\Theta}$. In the end, the nonnegative functions $\underline{u}$ and $u$ are respectively a subsolution and a supersolution of the same local reaction--diffusion equation in~$(t_0,\min(t_1,T))\times\R\times\overline{\Theta}$ with ordered values at time~$t_0$, so that $0\le\underline{u}(t,\cdot,\cdot)\leq u(t,\cdot,\cdot)$ in~$\R\times\overline{\Theta}$ for all times $t\in[t_0,\min(t_1,T))$. From the seminal blow-up result of Fujita \cite{Fuj-66} (the critical case being later completed by \cite{Hay-73} and \cite{Kob-Sir-Tun-77}, see also \cite{AW-78}), $v$ blows up as soon as $p+1\leq 3$, that is, $\|v(t,\cdot)\|_{L^\infty(\R)}\to+\infty$ as $t\to t_1$. Therefore, picking any $p\in(1,2]$, we deduce from the global boundedness of $u$ that $T<+\infty$.

As already noticed, this shows the persistence of $\rho$, and then that of $u$. The proof of Theorem~\ref{th:rescue} is thereby complete.
\end{proof}

\begin{rem}
As a consequence of Lemma~\ref{lem:vp} and Theorem~\ref{th:rescue}, some typical situations leading to evolutionary rescue (systematic persistence, even for small initial data) are the following:
\begin{itemize}
\item when the phenotypic space \lq\lq leans to the left'', that is  $\tmin+\tmax\leq 0$, and this whatever the mutation coefficient $\alpha>0$;
\item when $\tmin<0$ and the mutation coefficient $\alpha>0$ is small enough compared with $-\tmin$, a sufficient condition being
$$\frac 32 \left(\frac{\pi ^2\alpha}2\right)^{1/3}+\tmin<0,$$
and this whatever the maximal phenotypic trait $\tmax$.
\end{itemize}
In the above two cases, one has $\lambda_\alpha<0$ and the solutions of~\eqref{eq}--\eqref{initial} escape from a uniform-in-$(x,\theta)$ neighborhood of $0$ at large times, in the sense of~\eqref{liminf}.
\end{rem}

\begin{rem}
From Lemma~\ref{lem:vp}, the condition $\lambda_\alpha\le0$ implies that $\tmin<0$. If one further assumes that $\tmax\le0$, then $\lambda_\alpha$ is necessarily negative and in that case the population density has a well-characterized limit at long time, by Theorem~\ref{th:rescue}. For sign-changing traits ($\tmin<0<\tmax$) with $\tmin+\tmax>0$, the situation is more complex and $\lambda_\alpha$ may be nonpositive or positive, according to the value of $\alpha$. Small populations may manage to stay mainly in the zone of  negative traits, where they have a chance to escape extinction. A consequence of Theorem~\ref{th:rescue} above and Theorem~\ref{th:ext2} below is that what decides whether this evolutionary rescue happens or not is the sign of $\lambda_\alpha$.
\end{rem}

\subsection{$\lambda_\alpha>0$ makes extinction possible}\label{sec:poss_extin} 

In contrast with the previous section which was concerned with the case $\lambda_\alpha\le0$ and the systematic persistence, we consider in this section the case $\lambda_\alpha>0$ and we show the possibility of extinction in this case, that is, persistence is not systematic (this corresponds to the intersection of line~4 and columns~3-7 of Table~\ref{table:summary}).

From Lemma~\ref{lem:vp}, the condition $\lambda_\alpha>0$ implies that $\tmin+\tmax>0$, hence $\tmax>0$. The traits may then be nonnegative, or sign-changing. We first consider in Proposition~\ref{prop:ext} below the case of nonnegative traits, that is, $\tmin\ge0$. In this case, there is no ``refuge'' for small enough populations and therefore it is natural to guess that these populations go extinct: in other words, extinction is possible. Actually, the condition~$\tmin\ge0$ yields $\lambda_\alpha>0$ by Lemma~\ref{lem:vp}, and Proposition~\ref{prop:ext} can then be viewed as a particular case of the following Theorem~\ref{th:ext2}, which deals with the more general case~$\lambda_\alpha>0$. But we chose to first consider separately the case $\tmin\ge0$ in Proposition~\ref{prop:ext} since it is easier to deal with, and since the proof involves some different arguments as those of Theorem~\ref{th:ext2} below.

\begin{prop}[Possible extinctions when $\tmin\ge0$]\label{prop:ext} If $\tmin\geq 0$, then sufficiently small initial data of the Cauchy problem \eqref{eq}--\eqref{initial} lead to extinction.
\end{prop}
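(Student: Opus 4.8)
The plan is to turn the nonlocal equation~\eqref{eq} into a \emph{local} semilinear differential inequality satisfied by $u$ alone, and then to dominate $u$ by an explicit, exponentially decaying supersolution built from the principal eigenfunction $\varphi_\alpha$. The key point is that $\tmin\ge0$ forces $\lambda_\alpha>\tmin\ge0$ by Lemma~\ref{lem:vp}, so the trivial state is linearly stable once the (small, nonpositive) nonlocal contribution to the growth rate has been discarded.

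First I would reduce to the local inequality. Assume $\|u_0\|_X$ is small; then $M\le|\Theta|\,\|u_0\|_X\le1$, hence $0<\rho\le1$ in $(0,+\infty)\times\R$ by~\eqref{eq:max_rho}, the constant $C$ in~\eqref{boundf} can be chosen uniformly (it only involves $\theta_m$ and the bound $1$ for $\rho$), and the Harnack argument of Section~\ref{sec21} provides a uniform bound $\|u(t,\cdot,\cdot)\|_{L^\infty(\R\times\overline\Theta)}\le C_0$ for $t\ge1$, with $C_0$ depending only on $d,\alpha,\tmin,\tmax$. In particular, for any fixed $p>1$ the constant $C_{p,1}$ in~\eqref{ineg0} can be taken uniform over all such $u_0$. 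Using $0\le\theta\le\tmax$, $0\le\rho\le1$ and $\rho\le C_{p,1}\,u^{1/p}$ for $t\ge1$, I would estimate
\begin{equation*}
u(\rho-\theta)(1-\rho)=u\rho(1-\rho)-\theta u+\theta\rho u\le(1+\tmax)\,u\rho-\theta u\le C'\,u^{1+1/p}-\theta u
\end{equation*}
with $C':=(1+\tmax)\,C_{p,1}$, so that $u$ is a bounded subsolution of the \emph{local} problem
\begin{equation*}
w_t=d\,w_{xx}+\alpha\,w_{\theta\theta}-\theta w+C'\,w^{1+1/p}\quad\text{in }[1,+\infty)\times\R\times\overline\Theta,
\end{equation*}
with the no-flux conditions~\eqref{neumann}.

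Next I would exhibit the supersolution. For $\delta>0$ set $\overline w(t,x,\theta):=\delta\,\textup{e}^{-(\lambda_\alpha/2)(t-1)}\,\varphi_\alpha(\theta)$. Using $-\alpha\varphi_\alpha''+\theta\varphi_\alpha=\lambda_\alpha\varphi_\alpha$ and $\varphi_\alpha'(\tmin)=\varphi_\alpha'(\tmax)=0$, one computes
\begin{equation*}
\overline w_t-d\,\overline w_{xx}-\alpha\,\overline w_{\theta\theta}+\theta\,\overline w-C'\,\overline w^{1+1/p}=\overline w\Big(\frac{\lambda_\alpha}{2}-C'\,\overline w^{1/p}\Big)\ge0,
\end{equation*}
since $0\le\overline w\le\delta$ (as $\max_{\overline\Theta}\varphi_\alpha=1$) and $\delta$ is chosen with $\delta^{1/p}\le\lambda_\alpha/(2C')$. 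Fix such a $\delta$. By~\eqref{boundu} one has $\|u(1,\cdot,\cdot)\|_{L^\infty(\R\times\overline\Theta)}\le\textup{e}^{C}\|u_0\|_X$, so if $\|u_0\|_X$ is small enough then $u(1,\cdot,\cdot)\le\delta\min_{\overline\Theta}\varphi_\alpha\le\overline w(1,\cdot,\cdot)$. As $u$ is a bounded subsolution and $\overline w$ a bounded supersolution of the same local semilinear parabolic equation with Neumann conditions, the comparison principle gives $0\le u(t,\cdot,\cdot)\le\overline w(t,\cdot,\cdot)$ for all $t\ge1$, whence $\sup_{\R\times\overline\Theta}u(t,\cdot,\cdot)\le\delta\,\textup{e}^{-(\lambda_\alpha/2)(t-1)}\to0$ as $t\to+\infty$; by~\eqref{ineg0}, $\sup_\R\rho(t,\cdot)\to0$ as well. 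This is extinction.

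The step I expect to require the most care is the \emph{uniformity} of the constants across the family of small initial data: one must check that the Harnack bound $C_0$, and therefore $C_{p,1}$ in~\eqref{ineg0}, stay bounded as $\|u_0\|_X\to0$, which is precisely why it is convenient to first put oneself in the regime $M\le1$ and to start the local comparison at the fixed time $t=1$. Everything else is routine. (When $\tmin>0$ there is also a more elementary route avoiding~\eqref{ineg0}: if $M<\tmin$ then the constant $M$ is a supersolution of~\eqref{eq-rho}, so $\rho\le M<\tmin$ for all time, whence $\rho_t\le d\,\rho_{xx}-\mu\rho$ with $\mu=(\tmin-M)(1-M)>0$, and $\rho$ — hence $u$ via~\eqref{ineg} — decays exponentially; this argument, however, breaks down at the borderline $\tmin=0$, which is why the eigenfunction supersolution above is the natural choice.)
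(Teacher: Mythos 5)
Your proof is correct, but it takes a genuinely different route from the paper's. The paper's proof of Proposition~\ref{prop:ext} stays at the level of the mass $\rho$: it integrates the two-sided comparison~\eqref{ineg} in $\theta$ over $\Theta$ (using $\theta\ge0$ here) to get $K_1\rho^{p-1}\le\overline\theta$, plugs this into~\eqref{eq-rho}, and then compares $\rho$ with the scalar ODE $\dot s=s^p(s^{2-p}-K_1)(1-s)$, whose orbit starting below $\min(K_1^{1/(2-p)},1)$ decays to $0$; no eigenfunction appears. You instead work at the level of $u$ itself: you close the equation using $\rho\le C_{p,1}u^{1/p}$, then dominate $u$ by an exponentially decaying supersolution $\delta\,\textup{e}^{-\lambda_\alpha(t-1)/2}\varphi_\alpha(\theta)$, exploiting $\lambda_\alpha>\tmin\ge0$. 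The two approaches trade off as follows. The paper's argument is shorter and more elementary (ODE comparison, no eigenvalue input), but the choice of $\theta\ge0$ is used essentially when lower-bounding $\overline\theta$ by integrating~\eqref{ineg}; that is what makes it specific to $\tmin\ge0$. Your argument never uses $\theta\ge0$ at all — the pointwise estimate $u(\rho-\theta)(1-\rho)\le(1+\tmax)u\rho-\theta u$ only needs $0\le\rho\le1$ and $\theta\le\tmax<1$ — so it actually establishes the stronger Theorem~\ref{th:ext2} ($\lambda_\alpha>0$) directly, at the cost of invoking both~\eqref{ineg0} and the eigenpair. It is worth noting that your argument is still not the one the paper uses for Theorem~\ref{th:ext2}: there the authors avoid~\eqref{ineg0} entirely, running a touching-point argument with the barrier $\ep\,\textup{e}^{-\mu t}\varphi_\alpha(\theta)$ directly against the nonlocal equation. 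Your hybrid — localize via~\eqref{ineg0}, then use a genuine eigenfunction supersolution with the standard parabolic comparison principle — sits between the two paper proofs and is perfectly valid. The uniformity concern you raise (that $C_{p,1}$ not blow up as $\|u_0\|_X\to0$) is correctly resolved: the $L^\infty$ bound on $u$ supplied by the Harnack argument of Section~\ref{sec21} is $\max(M,1)/C'$, which stays bounded as $M\to0$. Your elementary side remark for $\tmin>0$ (use the constant supersolution $M<\tmin$ in~\eqref{eq-rho}, then exponential decay) is also correct.
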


\begin{proof} On the one hand, by~\eqref{eq:max_rho}, one has $0\le \rho \le \max(M,1)$ in $[0,+\infty)\times\R$. On the other hand, $\tmin<\tb(t,x)<\tmax<1$ by~\eqref{tminmax}, and
$$(\rho(t,x)-\tb(t,x))\,(1-\rho(t,x))\le\frac{(1-\tb(t,x))^2}{4}\le\frac{(1-\tmin)^2}{4}=:C$$
for all $t>0$ and $x\in \R$. By comparison with a linear ordinary differential equation, we get
\begin{equation}
    \label{eq:rho(1,.)}
    0< \rho(1,\cdot)\le  M\textup{e}^{C}\ \hbox{ in }\R.
\end{equation}
Since $\Theta \subset [0,+\infty)$ here by assumption, we can choose $\tau=1$ and any $p>1$ in~\eqref{ineg} and integrate it against $\theta$ over $\Theta$ to reach
\begin{equation*}
K_1 \rho^{p-1}(t,x)\leq \overline \theta (t,x)\leq K_2 \rho^{1/p\,-1}(t,x)\ \hbox{ for all }t\ge1\hbox{ and }x\in\R,
\end{equation*}
where $K_i=K_i(p)>0$. By restricting to $1<p<2$, assuming $M\leq 1$ and using the above estimate, we deduce from~\eqref{eq-rho} the inequality
$$\rho_t-d\rho_{xx} \leq \rho ^p (\rho^{2-p}-K_1)\,(1-\rho)\ \text{ in }[1,+\infty)\times\R.$$
By direct comparison with the underlying ordinary differential equation, starting from $\sup_\R\rho(1,\cdot)$ and using~\eqref{eq:rho(1,.)}, we get that $\sup _{x\in \R} \rho(t,x)\to 0$ as $t\to +\infty$ as soon as $M$ also satisfies
$$M\textup{e}^{C}<\min\left(K_1^{1/(2-p)},1\right),$$
which concludes the proof.
\end{proof}

\begin{rem}
It might be tempting to use the same technique to get a persistence result for large initial data. However,
in order to obtain an inequality of the type 
\begin{equation*}
    \rho_t-d\rho_{xx} \geq \rho ^{1/p} (\rho^{2-1/p}-K_2)(1-\rho),
\end{equation*}
we would need to multiply $\rho-\overline \theta \geq \rho-K_2\rho^{1/p\,-1}$ by $\rho(1-\rho)\geq 0$. This requires $M\leq 1$, and then we could think of a persistence result by comparison and standard results on local bistable reaction--diffusion equations only if $0<K_2<1$ is small enough so that $\int_0^1s^{1/p}(s^{2-1/p}-K_2)(1-s)\,\textup{d}s>0$, which is typically false as $K_2$ is a large constant.
\end{rem}

\begin{thm}[Possible extinction when $\lambda_\alpha>0$]\label{th:ext2}
Let $\lambda_\alpha$ be the principal eigenvalue of the eigenproblem~\eqref{vp}--\eqref{rayleigh}. If $\lambda_\alpha>0$, then sufficiently small initial data of the Cauchy problem~\eqref{eq}--\eqref{initial} lead to extinction. Furthermore, the condition 
\begin{equation}\label{u0extinction}
\|u_0\|_{L^\infty(\R\times\Theta)}<\frac{\lambda_\alpha\,\min_{\overline{\Theta}}\varphi_\alpha}{(\tmax-\tmin)\,(1+\tmax)}
\end{equation}
is a sufficient condition for extinction.
\end{thm}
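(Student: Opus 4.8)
The plan is to project the solution onto the principal eigenfunction $\varphi_\alpha$ of~\eqref{vp}, which diagonalizes the trait--diffusion part exactly and reduces the nonlocal problem to a scalar \emph{local} parabolic inequality to which the comparison principle applies. Since it suffices to establish the explicit criterion~\eqref{u0extinction} (its right-hand side being a positive constant, this also yields the first assertion), set
\[
\sigma(t,x):=\int_\Theta u(t,x,\theta)\,\varphi_\alpha(\theta)\,\textup{d}\theta,
\]
which is nonnegative, of class $\mathcal{C}^{1;2}_{t;x}((0,+\infty)\times\R)\cap\mathcal{C}([0,+\infty)\times\R)$, and globally bounded since $u$ is (Section~\ref{sec21}). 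Multiplying~\eqref{eq} by $\varphi_\alpha$ and integrating over $\Theta$, the no-flux condition~\eqref{neumann} on $u$ and the Neumann condition $\varphi_\alpha'(\tmin)=\varphi_\alpha'(\tmax)=0$ allow two integrations by parts with no boundary terms, and using $-\alpha\varphi_\alpha''=(\lambda_\alpha-\theta)\varphi_\alpha$ one obtains
\[
\sigma_t-d\sigma_{xx}+\lambda_\alpha\sigma=\int_\Theta u\,\varphi_\alpha\,\big[\theta+(\rho-\theta)(1-\rho)\big]\,\textup{d}\theta\quad\text{in }(0,+\infty)\times\R.
\]

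Next I would simplify the bracket. A direct computation gives the identity $\theta+(\rho-\theta)(1-\rho)=\rho\,(1-\rho+\theta)$; the crucial feature is that it factors through $\rho$, so the right-hand side is genuinely quadratic (not linear) near $0$. Since $u\ge0$, $\varphi_\alpha>0$, $\rho\ge0$ and $\theta<\tmax$ in $\overline{\Theta}$, we have $0\le\rho(1-\rho+\theta)\le(1+\tmax)\,\rho$, hence
\[
\sigma_t-d\sigma_{xx}+\lambda_\alpha\sigma\le(1+\tmax)\,\rho\,\sigma\quad\text{in }(0,+\infty)\times\R.
\]
Writing $m:=\min_{\overline\Theta}\varphi_\alpha>0$ (positive by~\eqref{vp}, or by Lemma~\ref{lem:fctopropre}), one has $m\rho\le\sigma$, i.e. $\rho\le\sigma/m$, so that
\[
\sigma_t-d\sigma_{xx}\le\sigma\Big(\tfrac{1+\tmax}{m}\,\sigma-\lambda_\alpha\Big)=:g(\sigma)\quad\text{in }(0,+\infty)\times\R,
\]
a scalar local semilinear inequality for a bounded nonnegative function, continuous up to $t=0$.

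Finally, since $\lambda_\alpha>0$, the equilibrium $0$ of the ODE $\Sigma'=g(\Sigma)$ is asymptotically stable with basin of attraction $[0,\lambda_\alpha m/(1+\tmax))$. Because $\varphi_\alpha\le1$, we have $\sigma(0,x)=\int_\Theta u_0(x,\theta)\varphi_\alpha(\theta)\,\textup{d}\theta\le(\tmax-\tmin)\,\|u_0\|_{L^\infty(\R\times\Theta)}$, and assumption~\eqref{u0extinction} is exactly $(\tmax-\tmin)\,\|u_0\|_{L^\infty(\R\times\Theta)}<\lambda_\alpha m/(1+\tmax)$, so $\Sigma(0):=\sup_{x\in\R}\sigma(0,x)$ lies in the basin. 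Comparing $\sigma$ with the spatially homogeneous supersolution $\Sigma$ (legitimate since $\sigma$ is bounded and $g$ is locally Lipschitz), we get $0\le\sigma(t,x)\le\Sigma(t)$ for all $(t,x)$, with $\Sigma(t)\to0$ as $t\to+\infty$; hence $\sigma\to0$ uniformly in $x$, and then $\rho\le\sigma/m\to0$ uniformly, which is extinction, equivalently for $u$ by~\eqref{ineg}.

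I do not anticipate a genuine difficulty; the heart of the argument is the algebraic identity $\theta+(\rho-\theta)(1-\rho)=\rho(1-\rho+\theta)$, since without the factor $\rho$ the projected inequality would only read $\sigma_t-d\sigma_{xx}\le(1+\tmax-\lambda_\alpha)\,\sigma$, giving no decay. The remaining points are routine: justifying the comparison principle on $\R$ for bounded data, and noting that the inequality for $\sigma$ holds only for $t>0$ while the data are prescribed at $t=0$ (harmless, as $\sigma$ is continuous up to $t=0$ and $\rho\ge0$ already there).
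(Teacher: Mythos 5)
Your proof is correct and takes a genuinely different route from the paper's, though the two arguments share the same key algebraic insight. The paper works directly at the level of the population density $u$: it constructs the explicit time-decaying supersolution $w(t,\theta)=\varepsilon\,\textup{e}^{-\mu t}\varphi_\alpha(\theta)$ (with $0<\mu<\lambda_\alpha$), verifies that $w_t-dw_{xx}-\alpha w_{\theta\theta}+\theta w=(\lambda_\alpha-\mu)w>0$, and shows by a touching-point argument that $u$ can never overtake $w$; the crucial step at the contact point is exactly the factorization $\theta+(\rho-\theta)(1-\rho)=\rho(1-\rho+\theta)\le(1+\tmax)\rho$, together with $\rho\le\varepsilon(\tmax-\tmin)$ inherited from $u\le w$. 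You instead project $u$ onto the principal eigenfunction, which eliminates the trait variable entirely and converts the nonlocal problem into the scalar local inequality $\sigma_t-d\sigma_{xx}\le\sigma\big(\tfrac{1+\tmax}{m}\sigma-\lambda_\alpha\big)$, then compare with the underlying bistable-like ODE. Both approaches yield the identical explicit threshold~\eqref{u0extinction}. What each buys: the paper's version immediately gives pointwise exponential decay of $u$ with rate $\mu$ arbitrarily close to $\lambda_\alpha$ and avoids invoking the Harnack-type estimate~\eqref{ineg}; your version avoids any contact-point analysis, reducing the comparison to the textbook case of a scalar semilinear equation, at the cost of passing through $\rho\to0$ and then back to $u\to0$ via~\eqref{ineg} (which slightly degrades the stated decay rate, though not the extinction conclusion).

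One small inaccuracy worth flagging: the claimed lower bound $0\le\rho(1-\rho+\theta)$ is not true in general, since $\rho$ may exceed $1+\tmin$ when $M>1+\tmin$ (see~\eqref{eq:max_rho}), so $1-\rho+\theta$ can be negative for $\theta$ near $\tmin$. This is harmless, as your argument only uses the upper bound $\rho(1-\rho+\theta)\le(1+\tmax)\rho$, which does hold because $1-\rho+\theta\le1+\tmax$ and $u\varphi_\alpha\rho\ge0$ regardless of the sign of $1-\rho+\theta$. You should simply drop the lower-bound claim.
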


\begin{proof}
Let us now assume $\lambda_\alpha>0$. Fix $\mu\in(0,\lambda_\alpha)$, and then any $\ep$ such that
\begin{equation}\label{choiceeps}
0<\ep<\frac{\lambda_\alpha-\mu}{(\tmax-\tmin)(1+\tmax)}.
\end{equation}
Defining
$$w(t,x,\theta)=w(t,\theta):=\ep\,\textup{e}^{-\mu t}\varphi_\alpha(\theta),$$
where $\varphi_\alpha$ is as in~\eqref{vp}--\eqref{rayleigh} with $\lambda=\lambda_\alpha$, we find that
\begin{equation}\label{compute-H}
\mathcal H w:=w_t-dw_{xx}-\alpha w_{\theta \theta}+\theta w=\ep\, \textup{e}^{-\mu t}\varphi_\alpha(\theta)\left(\lambda_\alpha-\mu\right)=(\lambda_\alpha-\mu)w>0
\end{equation}
in $\R\times\R\times\overline{\Theta}$. Pick a nontrivial and nonnegative initial condition $u_0\in\mathcal{C}_c(\R\times\overline{\Theta})$ such that, for every $\theta\in\overline{\Theta}$, $\max_{\R} u_0(\cdot,\theta)< \ep\,\varphi_\alpha(\theta)$ (it is therefore sufficient to take $u_0\in\mathcal{C}_c(\R\times\overline{\Theta},[0,+\infty))$ such that $\max_{\R\times\overline{\Theta}}u_0=\|u_0\|_{L^\infty(\R\times\Theta)}<\ep\,\min_{\overline{\Theta}}\varphi_\alpha$). Notice that, by continuity with respect to~$\theta\in\overline{\Theta}$, there is $\eta>0$ such that $\max_{\R} u_0(\cdot,\theta)< \ep\,\varphi_\alpha(\theta)-\eta$ for all $\theta\in\overline{\Theta}$. We then define
$$T:=\sup\left\{\tau \geq 0:\forall\,t\in[0,\tau),\ \forall\,\theta\in\overline{\Theta},\ \sup_{\R} u(t,\cdot,\theta)\leq w(t,\theta)\right\}.$$
Notice that, due to the well-posedness (Proposition~\ref{prop:well-posedness}) and more specifically to the continuity of $u$ when $t\to 0^+$ in $L^\infty(\R\times\Theta)$, one has $T>0$.

We are going to show that $T=+\infty$ and, to do so, we assume by way of contradiction that $T<+\infty$. From the behavior~\eqref{zero-infini} at large $|x|$ and again from the continuity of $u$ with respect to $t$ in the sense of $L^\infty(\R\times\Theta)$, there must be a touching point~$(x_0,\theta_0)\in \R\times \overline \Theta$ such that the function $\psi:=w-u$ satisfies $\psi \geq 0$ in~$[0,T]\times \R \times \overline \Theta$ and~$\psi(T,x_0,\theta_0)=0$. In particular, there holds $\psi_t(T,x_0,\theta_0)\leq 0$ and~$\psi_{xx}(T,x_0,\theta_0)\geq 0$. Moreover, since~$\psi$ satisfies the no-flux boundary condition on~$\partial \Theta$, we also have $\psi_{\theta\theta}(T,x_0,\theta_0)\geq 0$ whether $\theta_0$ be in~$\Theta$ or on~$\partial \Theta$. Then, from~\eqref{eq} and~\eqref{compute-H} together with the $\mathcal{C}^{1;2}_{t;(x,\theta)}((0,+\infty)\times\R\times\overline{\Theta})$ regularity of $u$, the function $\psi$ satisfies 
\begin{equation*}
    \psi_t -d\psi_{xx}-\alpha\psi_{\theta\theta}=w\left(\lambda_{\alpha}-\mu-\theta-\left(1-\frac{\psi}{w}\right)(\rho-\theta)(1-\rho)\right)
\end{equation*}
in $(0,+\infty)\times\R\times\overline{\Theta}$. Evaluating at $(T,x_0,\theta_0)$, we obtain
$$\lambda_\alpha- \mu \leq \rho(T,x_0)\,(1-\rho(T,x_0)+\theta_0)\leq \rho(T,x_0)\,(1+\tmax).$$
Since
$$\rho(T,x_0)=\int_\Theta u(T,x_0,\cdot)\le\int_\Theta w(T,\cdot)=\ep\,\textup{e}^{-\mu T}\int _{\Theta}\varphi_\alpha(\theta)\leq \ep\,(\tmax-\tmin),$$
and since the assumption $\lambda_\alpha>0$ yields $\tmin+\tmax>0$ by Lemma~\ref{lem:vp} (and then~$\tmax>0$ and~$1+\tmax>0$), we end up with
$$\lambda_\alpha- \mu \leq \ep\,(\tmax-\tmin)\,(1+\tmax),$$
which is a contradiction from the above choice of $\ep$. As a result, for each $u_0$ small enough so that
$$\max_\R\,u_0(\cdot,\theta)<\ep\,\varphi_\alpha(\theta)$$
for all $\theta\in\overline{\Theta}$, one has $T=+\infty$, and then
$$0\le u(t,x,\theta)\leq w(t,\theta)=\ep\,\textup{e}^{-\mu t}\varphi_\alpha(\theta)$$
for all $(t,x,\theta)\in[0,+\infty)\times\R\times\overline{\Theta}$. Therefore, $\|u(t,\cdot,\cdot)\|_{L^\infty(\R\times\Theta)}\to0$ as $t\to+\infty$. Lastly, since $\mu\in(0,\lambda_\alpha)$ and $\ep$ as in~\eqref{choiceeps} were arbitrary, the condition~\eqref{u0extinction} is therefore a sufficient condition for extinction, and this completes the proof.
\end{proof}

\subsection{Is it true that $\tmin<1/2$ makes persistence possible?}\label{sec:poss_persis} 

Finally, we focus on the last remaining question, that is the possibility of persistence when $\tmin<1/2$. It turns out to be a challenging problem,\footnote{Note that the case $\lambda_\alpha\leq 0$ (which includes the case $\tmin+\tmax\leq 0$) is already solved, since it implies systematic persistence. However we will not use this observation.} with several cases to be distinguished according to the sign of $\tmin+\tmax-1$. That corresponds to columns~3,~4,~5 and~6 of Table~\ref{table:summary}.

\subsubsection{The sub-critical case $\tmin+\tmax<1$}

\begin{thm}[Possible persistence in the sub-critical case]\label{thm:poss_persis_subcrit}
If $\tmin+\tmax<1$, then there exist initial data of the Cauchy problem \eqref{eq}--\eqref{initial} that lead to persistence.
\end{thm}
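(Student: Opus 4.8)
The plan is to exhibit \emph{one} well-chosen initial datum that persists, by reducing the long-time behaviour of the full nonlocal problem to that of a genuinely bistable \emph{local} scalar equation for the mass $\rho$. Since persistence of $u$ is equivalent to persistence of $\rho$ (through the comparison~\eqref{ineg}), it suffices to produce $u_0$ for which $\sup_{x}\rho(t,x)$ stays bounded away from $0$. I would take $u_0(x,\theta)=z(x)\,g(\theta)$, where $g:\overline\Theta\to(0,+\infty)$ is continuous and \emph{nonincreasing} (for instance $g\equiv1/(\tmax-\tmin)$) and $z\in\mathcal{C}_c(\R,[0,+\infty))$ is chosen so that $M=\sup_{x}z(x)\int_\Theta g\le1$; by~\eqref{eq:max_rho} this forces $0<\rho\le1$ in $[0,+\infty)\times\R$, a bound that will be crucial. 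The precise spatial profile $z$ (and the length of its support) is pinned down only at the end.

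The heart of the argument is a symmetrisation estimate showing that the mean trait never exceeds the centre $\theta_c:=(\tmin+\tmax)/2$ of the interval, which by hypothesis satisfies $\theta_c<1/2$. Introduce the reflected function $\tilde u(t,x,\theta):=u(t,x,\tmin+\tmax-\theta)$; one checks directly that $\tilde u$ solves the same nonlocal system \eqref{eq}--\eqref{neumann}, \emph{with the same mass $\rho$}, but with $\theta$ replaced by $\tmin+\tmax-\theta$ in the reaction. Setting $w:=u-\tilde u$ on the half-cylinder $\R\times[\tmin,\theta_c]$, a short computation gives $w_t-dw_{xx}-\alpha w_{\theta\theta}=(1-\rho)\big(\rho\,w-\theta u+(\tmin+\tmax-\theta)\tilde u\big)$; since $\tmin+\tmax-\theta\ge\theta$ on $[\tmin,\theta_c]$ and $\tilde u\ge0$, and since $1-\rho\ge0$, this yields the linear differential inequality $w_t-dw_{xx}-\alpha w_{\theta\theta}-(1-\rho)(\rho-\theta)\,w\ge0$ with bounded zeroth-order coefficient. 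The parabolic maximum principle applies: $w$ has homogeneous Neumann data on $\{\theta=\tmin\}$, vanishes on $\{\theta=\theta_c\}$, tends to $0$ as $x\to\pm\infty$ by~\eqref{zero-infini}, is bounded, and $w(0,\cdot,\cdot)=z\,\big(g(\theta)-g(\tmin+\tmax-\theta)\big)\ge0$ precisely because $g$ is nonincreasing. Hence $w\ge0$, i.e. $u(t,x,\theta)\ge u(t,x,\tmin+\tmax-\theta)$ for all $t\ge0$, $x\in\R$, $\theta\in[\tmin,\theta_c]$. Pairing $\theta$ with $\tmin+\tmax-\theta$ in $\int_\Theta\theta u\,\textup{d}\theta$ and using this inequality, one gets $\tb(t,x)\le\theta_c$ for every $(t,x)$.

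With $\tb\le\theta_c$ and $0<\rho\le1$, equation~\eqref{eq-rho} shows that $\rho$ is a supersolution of the \emph{local} equation $v_t=d\,v_{xx}+v(v-\theta_c)(1-v)$, because $\rho(1-\rho)(\theta_c-\tb)\ge0$. As $\theta_c<1/2$, the reaction $f(s)=s(s-\theta_c)(1-s)$ satisfies $\int_0^1 f>0$, hence (whether $f$ is bistable or monostable) it admits a compactly supported stationary subsolution $\psi$ with $0<\max\psi<1$, obtained from the standard phase-plane construction and glued to $0$ outside its support. Now choose $z$, and the length of its support, so large that $\rho(0,\cdot)=z\int_\Theta g\ge\psi$ on $\R$ while keeping $\sup_{x}z\int_\Theta g\le1$ (possible since $\max\psi<1$); this also ensures $M>0$, so that~\eqref{initial} holds. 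By the comparison principle for the local bistable equation, $\rho(t,\cdot)\ge\psi$ for all $t\ge0$, so $\sup_{x}\rho(t,x)\ge\max\psi>0$: the mass, and therefore $u$, persists.

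The delicate step is the symmetrisation of the second paragraph; this is where the absence of a comparison principle for the nonlocal equation is bypassed. It works only because (i) $u_0$ is engineered to ``lean to the left'' in the trait variable, and (ii) the a priori bound $\rho\le1$ — which dictates the requirement $M\le1$ — keeps the factor $1-\rho$ nonnegative, so that the Allee term still selects \emph{against} large traits; if $\rho$ were allowed to exceed $1$ somewhere, the sign in the differential inequality for $w$ would flip and the estimate $\tb\le\theta_c$ could fail. The remaining ingredients (the reduction to the local bistable equation and the bump subsolution) are classical.
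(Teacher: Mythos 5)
Your proof is correct, and while it reaches the same pivotal bound $\tb\le(\tmin+\tmax)/2$ as the paper does, it derives that bound by a genuinely different device. The paper's route goes through Lemma~\ref{lem:decay}, which differentiates~\eqref{eq} in $\theta$, uses that the extra term $-u(1-\rho)$ has the favourable sign once $M\le1$ (so $\rho\le1$), and applies the maximum principle to conclude that $u$ stays nonincreasing in $\theta$; Lemma~\ref{lem:thetabarsup} then converts this full monotonicity into the mean-trait bound via Chebyshev's integral inequality. You instead fold the trait interval in half: you compare $u$ with its reflection $\tilde u(t,x,\theta)=u(t,x,\tmin+\tmax-\theta)$ (which, as you correctly note, carries the \emph{same} mass $\rho$), set $w=u-\tilde u$ on $\R\times[\tmin,(\tmin+\tmax)/2]$, verify that the residual source term $(1-\rho)\,\tilde u\,(\tmin+\tmax-2\theta)$ is nonnegative there --- again exactly because $\rho\le1$ --- and conclude $w\ge0$ from the maximum principle with mixed Neumann--Dirichlet data. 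Pairing $\theta$ with $\tmin+\tmax-\theta$ in $\int_\Theta\theta u$ then gives $\tb\le(\tmin+\tmax)/2$ directly, without Chebyshev. Your folding argument proves only the pointwise inequality $u(t,x,\theta)\ge u(t,x,\tmin+\tmax-\theta)$ on the left half, which is strictly weaker than full monotonicity in $\theta$ but is precisely what the mean-trait estimate needs; in exchange it does not require $u_0\in\mathcal{C}^1$ as in Lemma~\ref{lem:decay}, only that $u_0$ satisfy the reflection inequality initially. Both proofs hinge on $M\le1$ in the same way. Once $\tb$ is bounded away from $1/2$, both compare $\rho$ with the local equation $v_t=dv_{xx}+v\big(v-(\tmin+\tmax)/2\big)(1-v)$; the paper then invokes the Aronson--Weinberger spreading theory to get $\rho\to1$ locally uniformly, whereas you plant a compactly supported stationary bump subsolution $\psi$ underneath $\rho(0,\cdot)$ (essentially the building block of the Aronson--Weinberger argument) and extract the sufficient, though weaker, conclusion $\sup_x\rho(t,x)\ge\max\psi>0$ for all $t$.
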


We begin with some preliminary lemmas.

\begin{lem}[On the preservation of monotonicity in $\theta$]\label{lem:decay} Assume that $M\le 1$ and that~$u_0$ is nonnegative, of class $\mathcal{C}^1_c(\R\times\overline{\Theta})$ and nonincreasing with respect to $\theta\in\overline{\Theta}$. Then, the solution $u(t,x,\theta)$ of the Cauchy problem~\eqref{eq}--\eqref{initial} is a nonincreasing function of $\theta$, for each $t\ge0$ and $x\in\R$.
\end{lem}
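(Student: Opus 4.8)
The plan is to differentiate equation~\eqref{eq} with respect to the trait variable and to run a maximum principle on the resulting linear parabolic equation for $v:=u_\theta$. Since $u_0\in\C^1_c(\R\times\overline{\Theta})$, standard parabolic regularity (interior estimates for $t>0$, plus the $\C^1$ regularity of $u_0$ to reach $t=0$) shows that $u$ is smooth enough for the computation below and that $v=u_\theta$ is bounded on $[0,T]\times\R\times\overline{\Theta}$ for every $T>0$. Differentiating~\eqref{eq} in $\theta$ and using \emph{crucially} that the nonlocal term $\rho=\rho(t,x)$ does not depend on $\theta$, so that $\partial_\theta\big((\rho-\theta)(1-\rho)\big)=-(1-\rho)$, one gets
\begin{equation*}
v_t-dv_{xx}-\alpha v_{\theta\theta}-(\rho-\theta)(1-\rho)\,v=-(1-\rho)\,u\quad\text{in }(0,+\infty)\times\R\times\overline{\Theta}.
\end{equation*}

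Next I would collect the sign information. Because $M\le1$, the bound~\eqref{eq:max_rho} gives $0<\rho\le1$; together with $u\ge0$ this makes the right-hand side $-(1-\rho)u$ nonpositive, so $v$ is a \emph{subsolution} of the homogeneous linear equation obtained by dropping that term. From the Neumann conditions~\eqref{neumann} one has $v=u_\theta=0$ on $(0,+\infty)\times\R\times\partial\Theta$, and at the initial time $v(0,\cdot,\cdot)=\partial_\theta u_0\le0$ since $u_0$ is nonincreasing in~$\theta$.

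To conclude, let $C_0:=\sup|(\rho-\theta)(1-\rho)|<+\infty$ (finite by~\eqref{eq:max_rho} and $\tmax<1$) and set $\tilde v:=\textup{e}^{-C_0t}v$, which is a bounded subsolution of a linear parabolic equation with nonpositive zeroth-order coefficient on the cylinder $\R\times\Theta$, with $\tilde v=0$ on the lateral boundary and $\tilde v(0,\cdot,\cdot)\le0$. The maximum principle for bounded functions on the spatially unbounded cylinder---obtained for instance by comparing $\tilde v$ with $\delta\,\textup{e}^{\mu t}\cosh(\beta x)$ for suitable $\mu,\beta>0$ and letting $\delta\downarrow0$---then yields $\tilde v\le0$, hence $u_\theta=v\le0$ on $[0,+\infty)\times\R\times\overline{\Theta}$, which is exactly the monotonicity in~$\theta$ asserted in Lemma~\ref{lem:decay}.

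The step I expect to require the most care is the behavior of $v$ as $t\to0^+$ near the lateral boundary $\partial\Theta$: unless $u_0$ satisfies the compatibility condition $\partial_\theta u_0=0$ on $\partial\Theta$, the derivative $u_\theta$ may develop a boundary layer there and $v$ need not be continuous up to $t=0$ on $\R\times\partial\Theta$. This does not affect the sign conclusion, since $\partial_\theta u_0\le0$ and the lateral values $v=0$ for $t>0$ are both consistent with $v\le0$; rigorously one may apply the maximum principle on $[0,T]\times\R\times K$ for compact $K\Subset\Theta$ while keeping the exact boundary values $v=0$ on $\R\times\partial\Theta$, or first prove the result for smooth, compatible $u_0$ and then pass to the limit using the continuous dependence of Proposition~\ref{prop:well-posedness}. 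The unbounded $x$-direction is the other technical point, but it is harmless because $v$ is globally bounded, so the Phragm\'en--Lindel\"of comparison above applies.
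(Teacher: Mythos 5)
Your proof is correct and follows essentially the same route as the paper: differentiate \eqref{eq} in $\theta$ to obtain a linear parabolic equation for $v=u_\theta$ with nonpositive forcing $-u(1-\rho)\le0$ (using $M\le1$ to ensure $\rho\le1$), note the zero Dirichlet condition on $\partial\Theta$ and the nonpositive initial data, and conclude by the maximum principle. The only difference is that you spell out the Phragm\'en--Lindel\"of argument for the unbounded $x$-direction and flag the compatibility issue of $\partial_\theta u_0$ at $\partial\Theta$, whereas the paper simply invokes the parabolic maximum principle together with the asserted continuity of $v$ up to $t=0$; these are extra details, not a different argument.
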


\begin{proof}
First of all, from standard parabolic estimates and bootstrap arguments, the function $u$ is of class $\mathcal{C}^\infty((0,+\infty)\times\R\times\overline{\Theta})$ and, from the regularity of $u_0$ and similar arguments as in Section~\ref{sec21}, it follows that $v:=u_\theta$ is continuous in $[0,+\infty)\times\R\times\overline{\Theta}$ and locally bounded with respect to $t$. Moreover, we have $v(t,x,\tmin)=v(t,x,\tmax)=0$ for all $t\ge0$ and $x\in\R$, and differentiating~\eqref{eq} with respect to $\theta$, we get 
$$v_t=d v_{xx}+\alpha v_{\theta \theta}+v(\rho-\theta)(1-\rho)-u(1-\rho)$$
in $(0,+\infty)\times\R\times\overline{\Theta}$. From \eqref{eq:max_rho} and the assumption $M\le1$, one has $\rho \le 1$ in $[0,+\infty)\times\R$, thus $-u \,(1-\rho)\le 0$ and
$$v_t\le d v_{xx}+\alpha v_{\theta \theta}+v(\rho-\theta)(1-\rho)$$
in $(0,+\infty)\times\R\times\overline{\Theta}$. But since $u_\theta=v$ is nonpositive at initial time by assumption, the parabolic maximum principle implies that $u_\theta(t,x,\theta)=v(t,x,\theta)\le0$ for all $t\ge0$ and~$(x,\theta)\in \R\times\overline{\Theta}$.
\end{proof}

\begin{lem}[An upper bound for the mean trait]\label{lem:thetabarsup}
Under the assumptions of Lemma~$\ref{lem:decay}$, we have
$$\tb(t,x)\le\frac{\tmin+\tmax}{2},$$
for all $t>0$ and $x\in \R$.
\end{lem}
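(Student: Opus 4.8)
The plan is to combine the monotonicity in $\theta$ furnished by Lemma~\ref{lem:decay} with an elementary ``centre of mass'' inequality for nonincreasing densities on an interval.

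First, I would recall from Section~\ref{sec21} that $u>0$ in $(0,+\infty)\times\R\times\overline{\Theta}$, so that $\rho(t,x)>0$ and $\tb(t,x)$ is well defined for every $t>0$ and $x\in\R$. By Lemma~\ref{lem:decay}, for each such $(t,x)$ the function $\theta\mapsto u(t,x,\theta)$ is nonincreasing on $\overline{\Theta}$.

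Next, I would establish the following elementary fact: if $g:[\tmin,\tmax]\to[0,+\infty)$ is nonincreasing with $\int_{\tmin}^{\tmax}g>0$, then
\[
\int_{\tmin}^{\tmax}\theta\,g(\theta)\,\textup{d}\theta\le\frac{\tmin+\tmax}{2}\int_{\tmin}^{\tmax}g(\theta)\,\textup{d}\theta.
\]
Writing $m=(\tmin+\tmax)/2$ and $L=(\tmax-\tmin)/2$ and performing the change of variables $\theta=m+s$, this is equivalent to $\int_{-L}^{L}s\,g(m+s)\,\textup{d}s\le0$; splitting the integral at $s=0$ and reflecting the part over $[-L,0]$ gives $\int_{-L}^{L}s\,g(m+s)\,\textup{d}s=\int_0^L s\,\big(g(m+s)-g(m-s)\big)\,\textup{d}s$, whose integrand is nonpositive since $g(m+s)\le g(m-s)$ for $s\in[0,L]$ by monotonicity.

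Finally, applying this inequality to $g=u(t,x,\cdot)$ and dividing by $\rho(t,x)=\int_\Theta u(t,x,\theta)\,\textup{d}\theta>0$ yields $\tb(t,x)\le(\tmin+\tmax)/2$ for all $t>0$ and $x\in\R$, which is exactly the claim. I do not anticipate any real difficulty here: the only mild points of care are the positivity of $\rho$ (already proved in Section~\ref{sec21}), which makes the division legitimate, and the fact that Lemma~\ref{lem:decay} guarantees only a nonincreasing --- not a strictly decreasing --- profile, which is precisely what the non-strict inequality in the statement requires.
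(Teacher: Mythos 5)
Your proof is correct and rests on the same essential observation as the paper's: Lemma~\ref{lem:decay} makes $\theta\mapsto u(t,x,\theta)$ nonincreasing, and a nonincreasing density on $[\tmin,\tmax]$ has its weighted average of $\theta$ at most the midpoint. The paper reaches this by decomposing $\tb$ as $(\tmin+\tmax)/2$ plus a correction term and invoking Chebyshev's integral inequality for the anti-monotone pair $(\theta, u-\ub)$, whereas you give a self-contained reflection argument around the midpoint $m=(\tmin+\tmax)/2$; the two are equivalent, with your version slightly more elementary since it avoids citing a named inequality.
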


\begin{proof}
Set
\begin{equation}\label{def:ubar}
\ub(t,x):=\frac{1}{\tmax-\tmin}\int_{\Theta} u(t,x,\theta) \, \textup{d}\theta=\frac{1}{\tmax-\tmin}\,\rho(t,x),\ \hbox{ for }t\ge0,\ x\in\R,
\end{equation}
and observe that the mean trait $\tb(t,x)$ satisfies, for $t>0$ and $x\in\R$,
\begin{equation} \label{eq:thetabar_decomp}
\begin{array}{rl}
\tb(t,x) &= \ds \frac{1}{\rho(t,x)}\int_{\Theta} \theta\, u(t,x,\theta) \, \textup{d}\theta, \vspace{2mm} \\
&=  \ds \frac{1}{\rho(t,x)}\int_{\Theta} \theta\, \ub (t,x) \, \textup{d}\theta + \frac{1}{\rho(t,x)}\int_{\Theta} \theta\, (u(t,x,\theta)-\ub(t,x)) \, \textup{d}\theta, \vspace{2mm}\\
& \ds =\frac{\tmin+\tmax}{2} +\frac{1}{\rho(t,x)}\int_{\Theta}\theta\, (u(t,x,\theta)-\ub(t,x)) \, \textup{d}\theta. 
\end{array}
\end{equation}
From Lemma~\ref{lem:decay} we know that, for each $(t,x)\in[0,+\infty)\times\R$, the function $\theta\mapsto u(t,x,\theta)-\ub(t,x)$ is nonincreasing with mean value $0$ over $\Theta$. Since $\theta\mapsto\theta$ is of course increasing in $\overline{\Theta}$, Chebyshev's integral inequality implies that 
$$(\tmax-\tmin)\times\int_{\Theta} \theta\, (u(t,x,\theta)-\ub(t,x)) \, \textup{d}\theta\le\Big(\int_{\Theta}\theta\, \textup{d}\theta\Big)\times\Big(\int_{\Theta}(u(t,x,\theta)-\ub(t,x))\, \textup{d}\theta\Big) = 0.$$
With \eqref{eq:thetabar_decomp}, this completes the proof of Lemma~\ref{lem:thetabarsup}.
\end{proof}

We are now in the position to complete the proof of Theorem \ref{thm:poss_persis_subcrit}.

\begin{proof}[Proof of Theorem~$\ref{thm:poss_persis_subcrit}$] Assume that $\tmin+\tmax<1$, and take $u_0$ as in Lemmas~\ref{lem:decay} and~\ref{lem:thetabarsup}. Using Lemma~\ref{lem:thetabarsup}, we have
$$\tb(t,x)\le\theta^*:=\frac{\tmin+\tmax}{2}<\frac12\ \hbox{ for all $t>0$ and $x\in \R$.}$$
Thus, we deduce from \eqref{eq-rho} and the comparison principle (recall that, here, $0\leq \rho \leq 1$ in~$[0,+\infty)\times\R$ since $M\le1$), that $\rho(t,x) \geq \underline \rho(t,x)$ for all $t\ge0$ and $x\in \R$, where $\underline \rho$ denotes the solution of the Cauchy problem
\begin{equation}\label{eq-theta-etoile}
\underline{\rho}_t=d\,\underline{\rho}_{xx}+ \underline{\rho} (\underline{\rho}-\theta^*)\,(1-\underline{\rho}),\ \ t>0,\ x\in\R,
\end{equation}
starting from $\underline{\rho}(0,\cdot)=\rho(0,\cdot)$ in $\R$. As $\theta^*<1/2$, standard results of~\cite{AW-78} imply the existence of initial conditions $\rho_0^*\in\mathcal{C}_c(\R,[0,1))$ such that the solution $\rho^{*}=\rho^{*}(t,x)$ of \eqref{eq-theta-etoile} starting from~$\rho_0^{*}$ satisfies $\rho^{*}(t,x) \to 1$ as $t \to +\infty$, locally uniformly in $x\in\R$. It is then sufficient to choose a nonnegative initial condition $u_0\in\mathcal{C}^1_c(\R\times\overline{\Theta})$, which is nonincreasing in $\theta$ and such that~$1\ge\rho(0,\cdot)\ge \rho_0^{*}$ in $\R$,\footnote{This inequality is satisfied for instance if $u_0=1/(\tmax-\tmin)$ in $[-R,R]\times\overline{\Theta}$, with $[-R,R]$ containing the support of $\rho_0^*$.} to get that $1\geq \rho(t,x)\ge \underline{\rho}(t,x)\geq \rho^{*}(t,x) \to 1$ as $t\to +\infty$ locally uniformly in $x\in\R$. Then $\rho(t,x)\to1$ as $t\to+\infty$ locally uniformly in~$x\in\R$ and such a solution $u$ then persists (and it even satisfies~\eqref{limits2}, as in the last part of Step~1 of the proof of Theorem~\ref{th:rescue}). The proof of Theorem~\ref{thm:poss_persis_subcrit} is thereby complete. 
\end{proof}






\subsubsection{The super-critical case $\tmin<1/2<(\tmin+\tmax)/2$}\label{sec36}

This case corresponds to column 6 of Table~\ref{table:summary}. Since $\tmax<1$, one then has $\tmin>0$ in this case, hence $\lambda_\alpha>0$ by Lemma~\ref{lem:vp}. Therefore, by Theorem~\ref{th:ext2}, sufficiently small initial data $u_0$ of the Cauchy problem~\eqref{eq}--\eqref{initial} lead to extinction. Although the assumption $\tmin\ge1/2$ leads to systematic extinction by Theorem~\ref{thm:syst-ext}, the case $\tmin<1/2<(\tmin+\tmax)/2$ is more subtle and is handled with different techniques, leading to the identification of new parameter regimes.

\begin{thm}[Systematic extinction in the super-critical case with large $\alpha$]\label{thm:poss_persis_supercrit}
If $\tmin<1/2$ and $\tmin+\tmax>1$, then all solutions of the Cauchy problem~\eqref{eq}--\eqref{initial} go extinct, provided $\alpha>\alpha^\star$, for some $\alpha^\star>0$. Moreover, $\alpha^\star\le \alpha^\sharp,$ where $\alpha^\sharp$ has an explicit form which depends only on $M$, $\tmin$ and $\tmax$.
\end{thm}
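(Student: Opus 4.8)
The plan is to make rigorous the heuristic that a large mutation rate $\alpha$ flattens the profile $\theta\mapsto u(t,x,\theta)$ around its $\theta$-average, thereby pinning the mean trait $\tb(t,x)$ near the center $\theta_c:=(\tmin+\tmax)/2$ of $\Theta$. The relevant observation is that $\tmax<1$ together with $\tmin+\tmax>1$ force $\tmin>1-\tmax>0$, hence $\theta_c>1/2$. Once $\tb$ is known to stay above some level strictly larger than $1/2$ (at least wherever the mass is not too small), equation~\eqref{eq-rho} for $\rho$ becomes of the ``strongly bistable'' type with negative integral, and one concludes that $\rho\to0$ uniformly exactly as in the proof of Theorem~\ref{thm:syst-ext}, hence $u\to0$ by~\eqref{ineg}. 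So the real task is the quantitative, spatially uniform flattening.

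First I would set $\ub=\rho/(\tmax-\tmin)$ as in~\eqref{def:ubar} and study the trait fluctuation $v:=u-\ub$, which has zero $\theta$-average and satisfies $v_\theta=0$ on $\R\times\partial\Theta$. Subtracting $(\tmax-\tmin)^{-1}$ times~\eqref{eq-rho} from~\eqref{eq}, $v$ solves a linear parabolic equation $v_t=dv_{xx}+\alpha v_{\theta\theta}+R$; testing against $v$, integrating over $\Theta$, discarding the nonpositive term $-2d\int_\Theta v_x^2$, integrating by parts in $\theta$ (the boundary terms vanish) and using $\int_\Theta v\,\textup{d}\theta=0$ together with the global bound $\rho\le\max(M,1)$ of~\eqref{eq:max_rho}, the reaction contributes at most $A\Psi+B\sqrt{\Psi}$, where $\Psi(t,x):=\int_\Theta v^2(t,x,\theta)\,\textup{d}\theta$ and $A,B>0$ depend only on $M,\tmin,\tmax$. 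Combining this with the Poincaré inequality $\int_\Theta v_\theta^2\ge\pi^2(\tmax-\tmin)^{-2}\int_\Theta v^2$ for mean-zero functions yields, in $(1,+\infty)\times\R$,
\begin{equation*}
\partial_t\Psi-d\,\partial_{xx}\Psi\le\Big(A-\tfrac{2\pi^2\alpha}{(\tmax-\tmin)^2}\Big)\Psi+B\sqrt{\Psi}.
\end{equation*}
Since $\Psi$ is bounded (Section~\ref{sec21}), absorbing $B\sqrt{\Psi}\le\Psi+B^2/4$ and taking $\alpha$ so large that $\kappa:=2\pi^2\alpha(\tmax-\tmin)^{-2}-A-1>0$, a comparison with the spatially homogeneous supersolution solving $y'=-\kappa y+B^2/4$ gives $\limsup_{t\to+\infty}\sup_{\R}\Psi(t,\cdot)\le B^2/(4\kappa)=:\eta(\alpha)$, and $\eta(\alpha)\to0$ as $\alpha\to+\infty$; note that $\eta(\alpha)$ does not depend on $d$.

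Next, since $\tb-\theta_c=\rho^{-1}\int_\Theta(\theta-\theta_c)\,v\,\textup{d}\theta$, Cauchy--Schwarz gives $|\tb-\theta_c|\le c\,\rho^{-1}\sqrt{\Psi}$ with $c=c(\tmin,\tmax)$. Fixing $\theta_\flat\in(1/2,\theta_c)$ and $\rho_\star\in(0,\tmin)$, the energy estimate shows that for $\alpha$ and $t$ large enough one has $\tb(t,x)>\theta_\flat$ at every point where $\rho(t,x)\ge\rho_\star$. Where $\rho(t,x)<\rho_\star<\tmin$ the estimate says nothing, but there $\tb>\tmin>\rho$ and $\rho(1-\rho)\ge0$, so $\rho(\rho-\tb)(1-\rho)\le\rho(\rho-\tmin)(1-\rho)<0$. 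Using~\eqref{eq:limsup-mass} to fix $\varepsilon_0>0$ and $T$ with $0<\rho\le1+\varepsilon_0$ on $[T,+\infty)\times\R$, I would glue the two regimes into a single Lipschitz function $\overline f$ on $[0,1+\varepsilon_0]$ with $\overline f(0)=\overline f(\theta_\flat)=\overline f(1)=0$, the bistable sign pattern, $\overline f<0$ on $(1,1+\varepsilon_0]$ and $\int_0^{1+\varepsilon_0}\overline f<0$: explicitly $\overline f(s)=s(s-\tmin)(1-s)$ near $0$, $\overline f(s)=s(s-\theta_\flat)(1-s)$ on a middle range, a monotone interpolation in between kept between the two cubics, and $\overline f(s)=-(1-\tmax)(s-1)$ for $s>1$; the negativity of the integral follows from $\int_0^1 s(s-\theta_\flat)(1-s)\,\textup{d}s=(1-2\theta_\flat)/12<0$ once the middle range starts at a small enough value. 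By construction $\rho(\rho-\tb)(1-\rho)\le\overline f(\rho)$ on $[T,+\infty)\times\R$, so by~\eqref{eq-rho} and the comparison principle $0<\rho\le\rhob$ there, where $\rhob$ solves $\rhob_t=d\,\rhob_{xx}+\overline f(\rhob)$ with $\rhob(T,\cdot)=\rho(T,\cdot)$; since $\rho(T,\cdot)\to0$ at spatial infinity by~\eqref{zero-infini} with $\sup_\R\rho(T,\cdot)<1+\varepsilon_0$, the bistable result of~\cite{Fife_McLeod_19} forces $\rhob(t,\cdot)\to0$ uniformly, hence $\rho(t,\cdot)\to0$ and then $u(t,\cdot,\cdot)\to0$ uniformly by~\eqref{ineg}: extinction. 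Tracking the constants, the argument produces an explicit $\alpha^\sharp>0$, depending only on $M,\tmin,\tmax$, such that extinction holds for every $u_0$ as in~\eqref{initial} as soon as $\alpha>\alpha^\sharp$; taking $\alpha^\star$ to be the least threshold with this property then gives a positive $\alpha^\star\le\alpha^\sharp$.

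The step I expect to be the main obstacle is the spatial region where $\rho$ is small: the energy estimate controls $\tb$ only where $\rho$ is bounded below, and elsewhere one must instead exploit the sign-favorable structure that is available precisely because $\tmin>0$ in this regime, then merge it with the ``$\tb>\theta_\flat$'' bound into one admissible comparison function $\overline f$ with the correct bistable profile, Lipschitz regularity, and strictly negative integral over $[0,1+\varepsilon_0]$. A secondary point — needed to make $\alpha^\sharp$ depend only on $M,\tmin,\tmax$ and not on $d$ — is to run the energy estimate by discarding the $x$-diffusion term and comparing $\Psi$ with a supersolution constant in $x$, so that $d$ never enters the relevant constants.
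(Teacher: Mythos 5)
Your proposal is correct and follows the same overall strategy as the paper: a large-$\alpha$ energy estimate flattens the trait profile uniformly in $x$, which forces the mean trait above a level $>1/2$ wherever $\rho$ is not too small, and then a pointwise comparison of $\rho$ with a bistable reaction term of negative integral triggers Fife--McLeod extinction. Two implementation choices differ from the paper and are worth pointing out. First, you run the $L^2(\Theta)$ energy estimate on $v:=u-\ub$ (mean-zero, Neumann, controlled by the Poincar\'e--Wirtinger constant), whereas the paper works with $v:=u_\theta$ (Dirichlet, controlled by $\lambda_1^{\textup{D}}$), and then passes to $\|u-\ub\|_{L^2(\Theta)}$ via Poincar\'e--Wirtinger when estimating $\tb-\theta_c$. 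Since $\lambda_1^{\textup{D}}=\lambda_2^{\textup{N}}=\pi^2/(\tmax-\tmin)^2$ here, the two routes are quantitatively equivalent; yours is slightly more direct because the quantity that eventually controls $\tb-\theta_c$ is exactly the one you estimate, and the zero-mean structure kills the $\rho$-equation contribution for free. Second, your comparison nonlinearity $\overline f$ is built with zeros at $0,\theta_\flat,1$ by interpolating between the cubics $s(s-\tmin)(1-s)$ and $s(s-\theta_\flat)(1-s)$, so that the bistable upper solution $\rhob$ converges directly to $0$. The paper instead takes $\overline f$ with stable states $2\eta>0$ and $1+\eta^2$, which gives $\rhob\to 2\eta$ and then requires one further comparison, once $\rho<\tmin/2$, with $\rho(\rho-\tmin)(1-\rho)$ to finish. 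Your gluing thus removes a step, at the cost of a slightly less explicit $\overline f$; you should, as you note, verify that the interpolating piece can be taken Lipschitz and squeezed between the two cubics so that the sign pattern and $\int_0^{1+\varepsilon_0}\overline f<0$ survive once $\rho_\star$ (hence the interpolation window) is small, but this is routine. The remaining details you flag (absorbing $B\sqrt\Psi$, discarding $-d\int v_x^2$ so that $d$ does not enter $\alpha^\sharp$, and the final passage from $\rho\to0$ to $u\to0$ via~\eqref{ineg}) are all handled correctly and match the paper's bookkeeping.
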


\begin{proof} 
As already underlined, one here has $\tmin>0$. Consider for the moment any~$\alpha>0$ and any solution $u$ of~\eqref{eq}--\eqref{initial}. Define
$$v=u_\theta$$
and, for $t>0$ and $x\in\R$,
$$V(t,x):=\frac{1}{2}\|v(t,x,\cdot)\|_{L^2(\Theta)}^2.$$
From standard parabolic estimates and the global boundedness of $u$ and $\rho$, the function~$u$ is of class $\mathcal{C}^\infty((0,+\infty)\times\R\times\overline{\Theta})$, and $u_\theta$ is bounded in $[\ep,+\infty)\times\R\times\overline{\Theta}$ for each $\ep>0$. Hence, the function $V$ is of class $C^\infty((0,+\infty)\times\R)$ and bounded in $[\ep,+\infty)\times\R$ for each~$\ep>0$.

We derive in this paragraph a partial differential inequality satisfied by $V(t,x)$ for any fixed $(t,x)\in(0,+\infty)\times\R$. All quantities below involving $\rho$, $V$ and the partial derivatives of $V$, are evaluated at $(t,x)$. Differentiating~\eqref{eq} with respect to $\theta$, multiplying by $v$ and integrating over $\Theta$, we get that
$$V_t=d\int_\Theta v\, v_{xx} \, \textup{d}\theta+\alpha \int_\Theta v\, v_{\theta\theta} \, \textup{d}\theta+(1-\rho)\int_\Theta v^2(\rho-\theta) \, \textup{d}\theta- (1-\rho)\int_\Theta u\, v \, \textup{d}\theta.$$
Integrating by parts (with $v\equiv0$ on $(0,+\infty)\times\R\times\partial\Theta$) and using $0\leq \rho(t,x)\leq\max(M,1)$ and $\rho(t,x)(1-\rho(t,x))\leq1/4$, we obtain
$$V_t \le d\int_\Theta v\,v_{xx}\,\textup{d}\theta- \alpha \int_\Theta (v_{\theta})^2\,\textup{d}\theta +\left(\frac12+2(M+1)\tmax\right) V +(M+1) 
\int_\Theta u\,|v|\,\textup{d}\theta.$$
Since
$$\int_\Theta v\,v_{xx}\,\textup{d}\theta=\int_\Theta \Big(\frac{v^2}{2}\Big)_{xx}\,\textup{d}\theta-\int_\Theta v_x^2\,\textup{d}\theta\leq V_{xx},$$
the Cauchy--Schwarz inequality leads to
$$V_t \le d\,V_{xx} - \alpha \|v_\theta\|_{L^2(\Theta)}^2 +\left(\frac12+2(M+1)\tmax\right)V+(M+1)\sqrt{2U}\sqrt{2V},$$
where
$$U(t,x):=\frac{1}{2}\|u(t,x,\cdot)\|_{L^2(\Theta)}^2.$$
Additionally, as $v=0$ on $(0,+\infty)\times\R\times\partial\Theta$, the Poincar\'e inequality yields
$$\|v_\theta(t,x,\cdot)\|_{L^2(\Theta)}^2\ge \lambda ^{\textup{D}}_1 \|v(t,x,\cdot)\|_{L^2(\Theta)}^2=2 \, \lambda ^{\textup{D}}_1 V(t,x),$$
with $\lambda ^{\textup{D}}_1>0$ the principal eigenvalue of $-\partial_{\theta \theta}$ in $\Theta$ with Dirichlet boundary conditions, that is,
\begin{equation}\label{lambda1D}
\lambda^{\textup{D}}_1:=\min \left\{\frac{\int _\Theta \varphi _\theta ^{2}}{\int _\Theta \varphi^{2}}: \varphi \in H^{1}_0(\Theta)\setminus\{0\} \right\}=\frac{\pi^2}{(\tmax-\tmin)^2}.
\end{equation}
Therefore,
\begin{equation}\label{a-suivre}
    V_t - d V_{xx} \le \sqrt{V}\lp \Big(\frac12+2(M+1)\tmax-2 \alpha\lambda ^{\textup{D}}_1\Big)\sqrt{V} + 2(M+1)\sqrt{U}\rp.
\end{equation}
Next, after recalling the definition~\eqref{def:ubar} of $\overline{u}(t,x)$, the Poincar\'{e}--Wirtinger inequality implies that
\begin{equation}
\label{wirtinger}
2V(t,x)=\|u_\theta(t,x,\cdot)\|_{L^2(\Theta)}^2\ge\lambda^{\textup{N}}_2\|u(t,x,\cdot)-\ub(t,x)\|_{L^2(\Theta)}^2,
\end{equation}
with $\lambda ^{\textup{N}}_2>0$ the smallest nonzero eigenvalue of $-\partial_{\theta \theta}$ in $\Theta$ with Neumann boundary conditions, that is 
\begin{equation}\label{lambda2N}
\lambda^{\textup{N}}_2:=\min \left\{\frac{\int _\Theta \varphi _\theta ^{2}}{\int _\Theta \varphi^{2}}: \varphi \in H^{1}(\Theta)\setminus\{0\}, 
\int _\Theta \varphi =0 \right\}=\frac{\pi^2}{(\tmax-\tmin)^2}=\lambda^{\textup{D}}_1.
\end{equation}
In particular, we deduce from $\sqrt{2U(t,x)}\leq \|u(t,x,\cdot)-\ub(t,x)\|_{L^2(\Theta)}+\| \ub(t,x) \| _{L^2(\Theta)}$ that
\begin{equation}\label{to-be-plugged}
2(M+1)\sqrt{U(t,x)}\leq \frac{2(M+1)}{\sqrt{\lambda _2^{\textup{N}}}} \sqrt{V(t,x)}+\frac{\sqrt{2}(M+1)^2}{\sqrt{\tmax-\tmin}},
\end{equation}
since $0\leq \ub (t,x)\leq\max(M,1)/(\tmax-\tmin)\le(M+1)/(\tmax-\tmin)$ in view of~\eqref{eq:max_rho} and~\eqref{def:ubar}. Plugging \eqref{to-be-plugged} into \eqref{a-suivre}, we end up with
$$V_t-d\,V_{xx} \le \sqrt{V}\lp R - \mu_\alpha\, \sqrt{V} \rp,$$
where
\begin{equation}\label{defmualpha}
\mu_\alpha=2\alpha\lambda_1^{\textup{D}}-\frac12-2(M+1)\tmax-\frac{2(M+1)}{\sqrt{\lambda_2^{\textup{N}}}}\ \hbox{ and }\  R=\frac{\sqrt{2}(M+1)^2}{\sqrt{\tmax-\tmin}}.
\end{equation}

From~\eqref{lambda1D},~\eqref{lambda2N} and~\eqref{defmualpha}, there exists $\alpha^\sharp_1>0$, depending only on~$M$,~$\tmin$ and~$\tmax$, such that
$$\mu_\alpha>0\ \hbox{ for all }\alpha>\alpha^\sharp_1.$$ A straightforward computation shows that $$\alpha^\sharp_1=\frac{(\tmax-\tmin)^2}{\pi^3}\lp (M+1)(1+\pi)\tmax-(M+1)\tmin+\frac{\pi}{4} \rp.$$
From now on, we assume that $\alpha>\alpha_1^\sharp$. By comparison with the explicit solution $\overline{V}$ of the underlying ordinary differential equation starting at time $t=1$ from $\sup_{\mathbb{R}}V(1,\cdot)$ (which is a nonnegative real number), we get that
\begin{equation}
\label{V-bar}
V(t,x)\le \lp \frac{R}{ \mu_\alpha}\!+\! \Big(\sup_{\mathbb{R}}V(1,\cdot)-\frac R{\mu_\alpha}\Big) \, \textup{e}^{-\mu _\alpha(t-1)/2}  \rp^2:=\overline{V}(t)\ \text{ for all }t\geq 1,\ x\in\R.
\end{equation}
Next, coming back to \eqref{eq:thetabar_decomp}, namely
$$\tb(t,x) = \frac{\tmin+\tmax}{2} +\frac{1}{\rho(t,x)}\int_{\Theta} \theta\, (u(t,x,\theta)-\ub(t,x)) \, \textup{d}\theta,$$
we obtain, from the Cauchy--Schwarz inequality together with~\eqref{wirtinger} and~\eqref{V-bar},
\begin{equation} \label{minoration}
         \tb(t,x) \ge \frac{\tmin+\tmax}{2} -\frac{K}{\rho(t,x)} \sqrt{\overline{V}(t)}
         \quad\text{for all }t\geq1\hbox{ and }x\in\R, 
\end{equation}
where $K=\sqrt{2(\tmax^3-\tmin^3)/(3\lambda_2^{\textup{N}})}>0$ is a constant that only depends on~$\tmin$ and~$\tmax$.

Now, remembering that $\tmin+\tmax>1$ and $\tmin>0$, let $\eta>0$ (only depending on~$\tmin$ and~$\tmax$) be small enough so that
\begin{equation}\label{defeta}
\tmin+\tmax>1+ 10 \eta\ \hbox{ and }\ 0<\eta<\frac{\tmin}{4}.
\end{equation}
From~\eqref{defmualpha}, it follows that there exists $\alpha^\sharp\ge\alpha^\sharp_1$ only depending on $M$, $\tmin$ and $\tmax$, such that
\begin{equation}
\label{inegalite}
\frac{R}{\mu_\alpha}<\frac{\eta^2}{K}\ \hbox{ for all }\alpha>\alpha^\sharp.
\end{equation}
This threshold can be computed explicitly:
\begin{multline}\label{eq:alpha_sharp}
\alpha^\sharp=\frac{(\tmax-\tmin)^2}{\pi^3}\bigg( (M+1)(1+\pi)\tmax-(M+1)\tmin+\frac{\pi}{4} \\ +\frac{(M+1)^2}{(\eta^*)^2\, \sqrt{3}}\sqrt{(\tmax^3-\tmin^3)(\tmax-\tmin)}\bigg),   \end{multline}
with $\eta^*=\min\big((\tmax+\tmin-1)/10,\tmin/4\big)$.
From now on, we assume that
$$\alpha>\alpha^\sharp.$$
From \eqref{V-bar} and \eqref{inegalite}, it follows that there exists a time $T_1\ge1$ such that $K\sqrt{\overline{V}(t)}<\eta^2$ for all $t\ge T_1$ (notice that~$T_1$ depends on $\alpha$ and also of $\sup_\R V(1,\cdot)$ and then also on $u$ itself, but this does not matter since we are only concerned with the extinction, at long time, of $u$). Observe also that, defining $\theta^*:=1/2 + 2\eta$, \eqref{minoration} insures that
\begin{equation} \label{eq:thetabar_eps}
\text{ if }\;   t\geq T_1\text{ and }\rho(t,x)\geq \eta,\; \text{ then }\;     \tb(t,x) \ge \frac{\tmin+\tmax}{2} -\eta>\frac{1}{2}+4\eta>\theta^*.
\end{equation}

Define 
\begin{equation*}
\overline{f}(s)=(s-2\eta)\left(s-\theta^*\right)(1-s)+\eta^2 (s-2\eta)(s-\theta^*)\mathbf{1}_{[\theta^*,1+\eta^2]}(s),\ \hbox{ for }s\ge0.
\end{equation*}
Since $1+10\eta<\tmin+\tmax<2$, one has $\eta<1/10$, so that
$$2\eta<\frac12<\theta^*=\frac12+2\eta<1<1+\eta^2$$
and $\overline{f}$ is a bistable reaction term with stable steady states $2\eta$ and~$1+\eta^2$ and unstable steady state $\theta^*=1/2+2\eta$. By straightforward computations, one has
\begin{equation*}
\int _{2\eta}^{1+\eta^2}\overline{f}(s)\textup{d}s=-\frac{\eta}{6}+o(\eta)\quad\text{as }\eta\to 0.
\end{equation*}
Hence we may assume without loss of generality, up to reducing $\eta>0$ (depending on~$\tmin$ and~$\tmax$ only), that $\int _{2\eta}^{1+\eta^2}\overline{f}<0$. Define then
\begin{equation*}
T_2 = \inf\left\{ t\geq T_1:\forall\,\tau\geq t,\ \sup_\R\rho(\tau,\cdot)\leq 1+\frac{\eta^2}{2}\right\}.
\end{equation*}
The time $T_2$, which depends on $u$ and the other parameters of the problem, is well-defined and finite by virtue of~\eqref{eq:limsup-mass}, that is, $1\le T_1\le T_2<+\infty$. Consider now the solution $\rhob=\rhob(t,x)$ of the bistable reaction--diffusion equation
\begin{equation}\label{eq-rhob}
\rhob_t=d\rhob _{xx}+\overline{f}(\rhob), \quad t>T_2,\ x\in \R,
\end{equation}
starting from $\rhob(T_2,x)=\max(\rho(T_2,x),2\eta)$. Notice that $2\eta\le\rhob(T_2,x)\le1+\eta^2/2<1+\eta^2$ for all $x\in\R$, and that $\rhob(T_2,x)\to2\eta$ as $x\to\pm\infty$ by~\eqref{zero-infini}. Therefore, it follows from \cite{Fife_McLeod_19} (as in the proof of Theorem~\ref{thm:syst-ext} in the case $\tmin>1/2$, see Section~\ref{sec:imposs_persis}) that
\begin{equation}
\label{rhobt}
\rhob(t,x)\to 2\eta\; \text{ as $t\to +\infty$, uniformly in $x\in\R$.}
\end{equation}

We finally claim that
$$f(t,x,\rho(t,x)):=\rho(t,x)\,(\rho(t,x)-\tb(t,x))\,(1-\rho(t,x))\leq \overline{f}(\rho(t,x))\ \text{ for all } t\geq T_2,\ x\in \R.$$
Indeed, for any $t\ge T_2$ and $x\in\R$, one has on the one hand $0<\tmin<\tb(t,x)<\tmax<1$ and $0<\rho(t,x)\le1+\eta^2/2<1+\eta^2$, and on the other hand:
\begin{itemize}
\item when $1\leq \rho(t,x)\leq 1+\eta^2/2$, $f(t,x,\rho(t,x))-\overline{f}(\rho(t,x))$ is obviously nonpositive since $f(t,x,\rho(t,x))\le0$ and $\overline{f}(\rho(t,x))\ge0$;
\item when $\eta\le\rho(t,x)<1$, then
$$f(t,x,\rho(t,x))-\overline{f}(\rho(t,x))\leq f(t,x,\rho(t,x))-(\rho(t,x)-2\eta)\,(\rho(t,x)-\theta^*)\,(1-\rho(t,x))$$
and the sign of the right-hand side is that of $\rho(t,x)(\theta^*-\overline \theta(t,x)+2\eta)-2\eta \theta ^*$, which is negative in view of~\eqref{eq:thetabar_eps};
\item when $0<\rho(t,x)<\eta$, the sign of 
$$f(t,x,\rho(t,x))-\overline{f}(\rho(t,x))=(1-\rho(t,x))\,\big[\theta^*(\rho(t,x)-2\eta)+\rho(t,x)(2\eta-\tb(t,x))\big]$$
is that of $\theta^*(\rho(t,x)-2\eta)+\rho(t,x)(2\eta-\tb(t,x))$; but, since $\tb(t,x)>\tmin>0$, there holds $\theta^*(\rho(t,x)-2\eta)+\rho(t,x)(2\eta-\tb(t,x))<-\eta\theta^*+2\eta^2=-\eta/2<0$.
\end{itemize}
As a result, recalling~\eqref{eq-rho}, $\rho$ is then a subsolution of the equation~\eqref{eq-rhob} satisfied by $\rhob$ for times~$t\ge T_2$, with $0<\rho(T_2,\cdot)\le\rhob(T_2,\cdot)$ in $\R$, and therefore, $0<\rho (t,x)\le  \rhob(t,x)$ for all~$t\ge T_2$ and~$x\in\R$ from the maximum principle. 

Since $\eta<\tmin/4$ by~\eqref{defeta}, we deduce from~\eqref{rhobt} the existence of a time $T_3\geq T_2$ such that
$$0<\rho (t,x)<\frac{\tmin}{2}<\frac12<1\ \hbox{ for all } t\geq T_3,\ x\in \R.$$
Since $\tb (t,x)>\tmin$ in $(0,+\infty)\times\R$ and $\tmin<\tmax<1$, it then follows from~\eqref{eq-rho} and the previous inequality that 
$$\rho _t \leq d \rho _{xx}+\rho(\rho -\tmin)(1-\rho)\quad\text{for all }t\geq T_3,\ x\in\R.$$
Hence, by comparison with the underlying bistable ordinary differential equation, one infers that $\rho (t,x)\to 0$ as $t\to +\infty$, uniformly in $x\in \R$. In other words, $u$ goes extinct, as soon as $\alpha>\alpha^\sharp$, with $\alpha^\sharp>0$ given by \eqref{eq:alpha_sharp} and thus only depending on $M$, $\tmin$ and~$\tmax$. The proof of Theorem~\ref{thm:poss_persis_supercrit} is thereby complete.
\end{proof}

\section{Numerical results \label{sec:numerics}}

The objective of this section is to get an overview of the shape of the solution $u(t,x,\theta)$ of~\eqref{eq} and to test the conjectures made in Section~\ref{sec:conj}. We solved the equation~\eqref{eq} on a rectangular domain $(x,\theta)\in I\times (\tmin,\tmax)$ with a ``method of lines", using the Matlab$^\circledR$ ode45 solver (the source code is available in the Open Science Framework repo\-sitory: \url{https://osf.io/w8nuz/}). We considered characteristic functions of sets of various dimensions $L\times (\tmin,\tmax)$ as initial conditions (though they are not continuous, these functions can be approximated by continuous functions without changing much the numerics):
\begin{equation} \label{eq:DI_num}
u_0(x,\theta)=\frac{1}{\tmax-\tmin}\mathds{1}_{(-L/2,L/2)}(x)\; \hbox{ for }(x,\theta)\in \underbrace{(-60,60)}_{=:I}\times(\tmin,\tmax),
\end{equation}
with $0<L\le 80$.

\begin{figure}[ht]
\center
\subfigure[]{\includegraphics[width=0.8\textwidth]{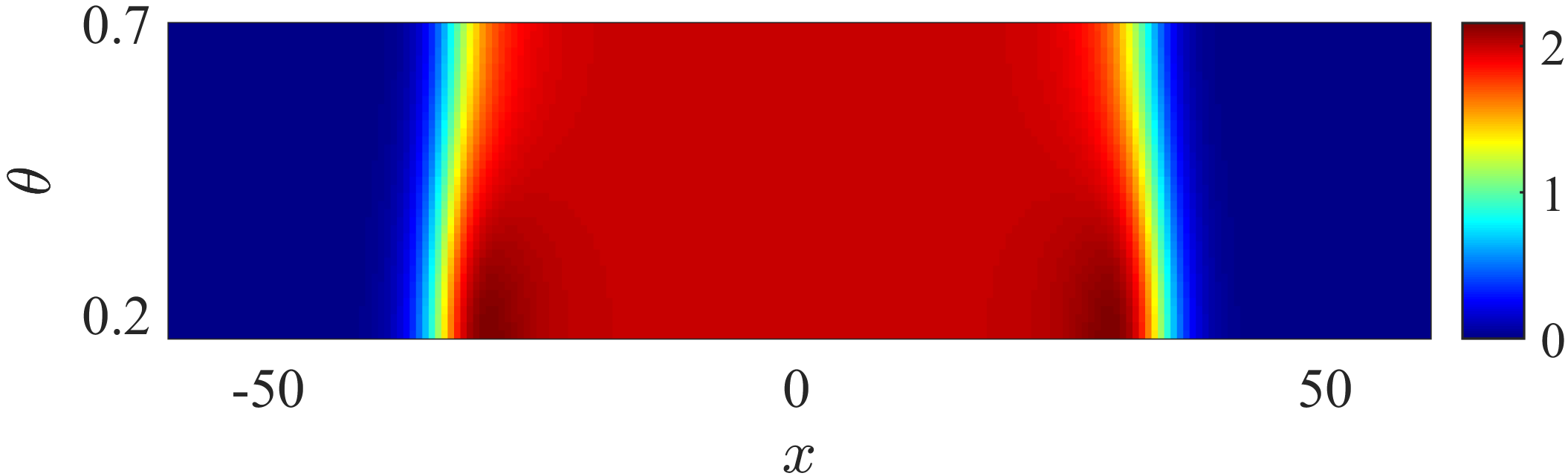}}
\subfigure[]{\includegraphics[width=0.82\textwidth]{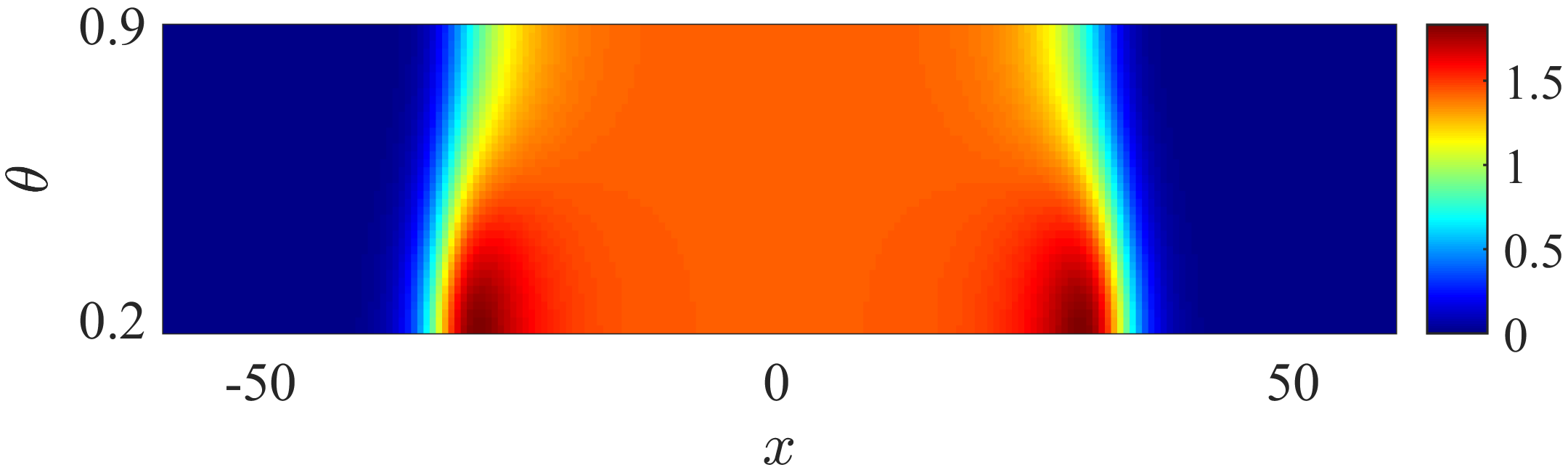}}
\caption{{\bf Numerical solution $u(t,x,\theta)$ of \eqref{eq} at some fixed time}. In panel~(a), we have taken $\tmin=0.2$ and $\tmax=0.7$, and the solution is computed at $t=200.$ In panel~(b), $\tmin=0.2$ and $\tmax=0.9$ and the solution is computed at $t=400$. In both cases, the initial condition is given by \eqref{eq:DI_num} with $L=20$. The other parameter values are: $d=1$ and $\alpha=4\cdot 10^{-3}$.}
\label{fig:distri}
\end{figure}

We depict the shape of $u(t,x,\theta)$ at some fixed positive time in Fig.~\ref{fig:distri} in the subcri\-tical case ($\tmin+\tmax<1$, panel~(a) and supercritical case ($\tmin+\tmax>1$, panel~(b). Interestingly,  the solution takes its highest values when $x$ is close to the leading edge of the front, i.e., at the transition zones between $\rho \approx 0$ and $\rho \approx 1$. At such positions, the population tends to concentrate on trait values close to $\tmin$. This is consistent with the interpretation of a stronger selection pressure due to the Allee effect at low density (see the biological motivation part of the Introduction). Conversely, in the ``core" of the population, the solution tends to get flatter, which reflects the convergence of the mass $\rho$ towards the value $1$, which in turns implies that the reaction term in \eqref{eq} is close to $0$. Thus, in this central region, the dynamics is mainly driven by diffusion (spatial diffusion and mutations). 



We now test our conjectures. To construct Fig.~\ref{fig:persistence}, we solved the equation~\eqref{eq} until a time $T=10^3$, for increasing values of the mutation parameter $\alpha$ (with step $10^{-4}$) and of the length $L$ of the support of $u_0$ (with step $1$). Each time, we computed the total mass $N(t)=\int_I \rho(t,x) \, \textup{d} x$ for $t\in [0,T]$. We considered that persistence occurred if $N(T)>|I|-1=119$ ($\rho \approx 1$ over the whole domain $I\times (\tmin,\tmax)$); that persistence was probable if $N(T)>N(T/2)$; that extinction was probable if  $N(T)<N(T/2)$; and that extinction occurred if $N(T)<1$. In the critical case $\tmin+\tmax=1$, we conjectured in Section~\ref{sec:conj} that, for any value of the mutation parameter $\alpha$, extinction or persistence can both occur according to the initial condition. This is fully consistent with the numerical results in Fig.~\ref{fig:persistence}a. In the supercritical case $\tmin+\tmax>1$, we proved that for $\alpha$ large enough, extinction was systematic. This corresponds to the region $\alpha > \alpha^\star \approx 7.5 \cdot 10^{-3}$ in Fig.~\ref{fig:persistence}b. In this plot, we also observe that, as conjectured, when $\alpha$ is below this threshold, extinction or persistence can both occur depending on $u_0$ (here $L$) if $\alpha \le \alpha^\star$. Note that, with the parameter values in Fig.~\ref{fig:persistence}b, the formula \eqref{eq:alpha_sharp} leads to $\alpha^\sharp\approx 259$ which is far from optimal. 

\begin{rem} Close to the critical threshold $\alpha^\star\approx 7.5 \cdot 10^{-3}$  and for  $L\approx 15$ we observe a small pink \lq\lq persistence region'' encroached below the cyan \lq\lq extinction region''. At first glance, this may appear surprising since larger values of $L$ are expected to lead to higher chances of persistence (even though the comparison principle does not hold). A closer look at the solution of \eqref{eq} (not depicted here) for $(\alpha,L)$ in this region shows that the solution seems to converge to a stationary state, either by increasing its total mass $N(t)$ (for small $L$, pink region) or by decreasing it (for large $L$, cyan region), which explains the pattern in Fig.~\ref{fig:persistence}b.
\end{rem}

\begin{figure}[ht]
\center
\subfigure[]{\includegraphics[width=0.48\textwidth]{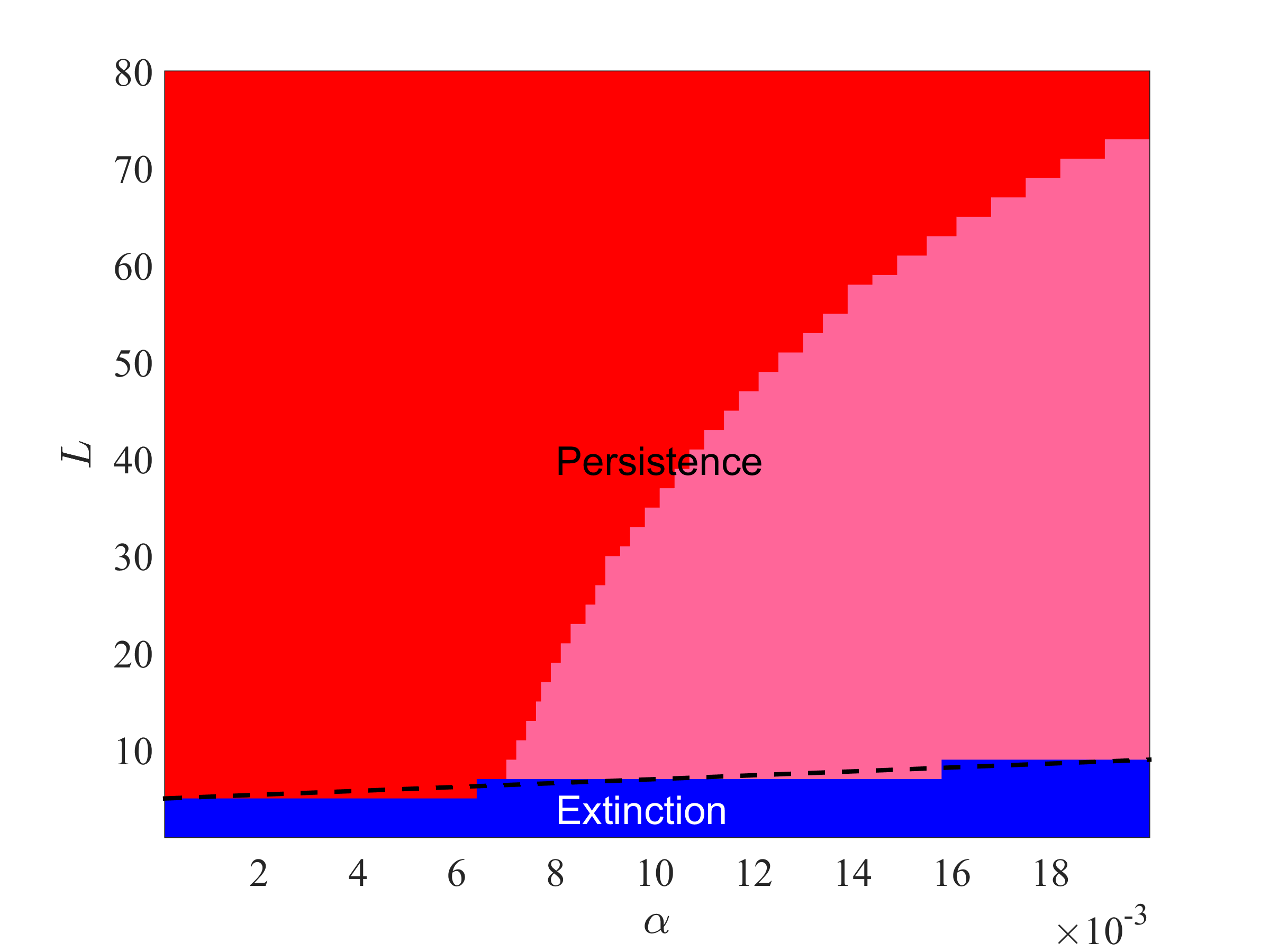}}
\subfigure[]{\includegraphics[width=0.48\textwidth]{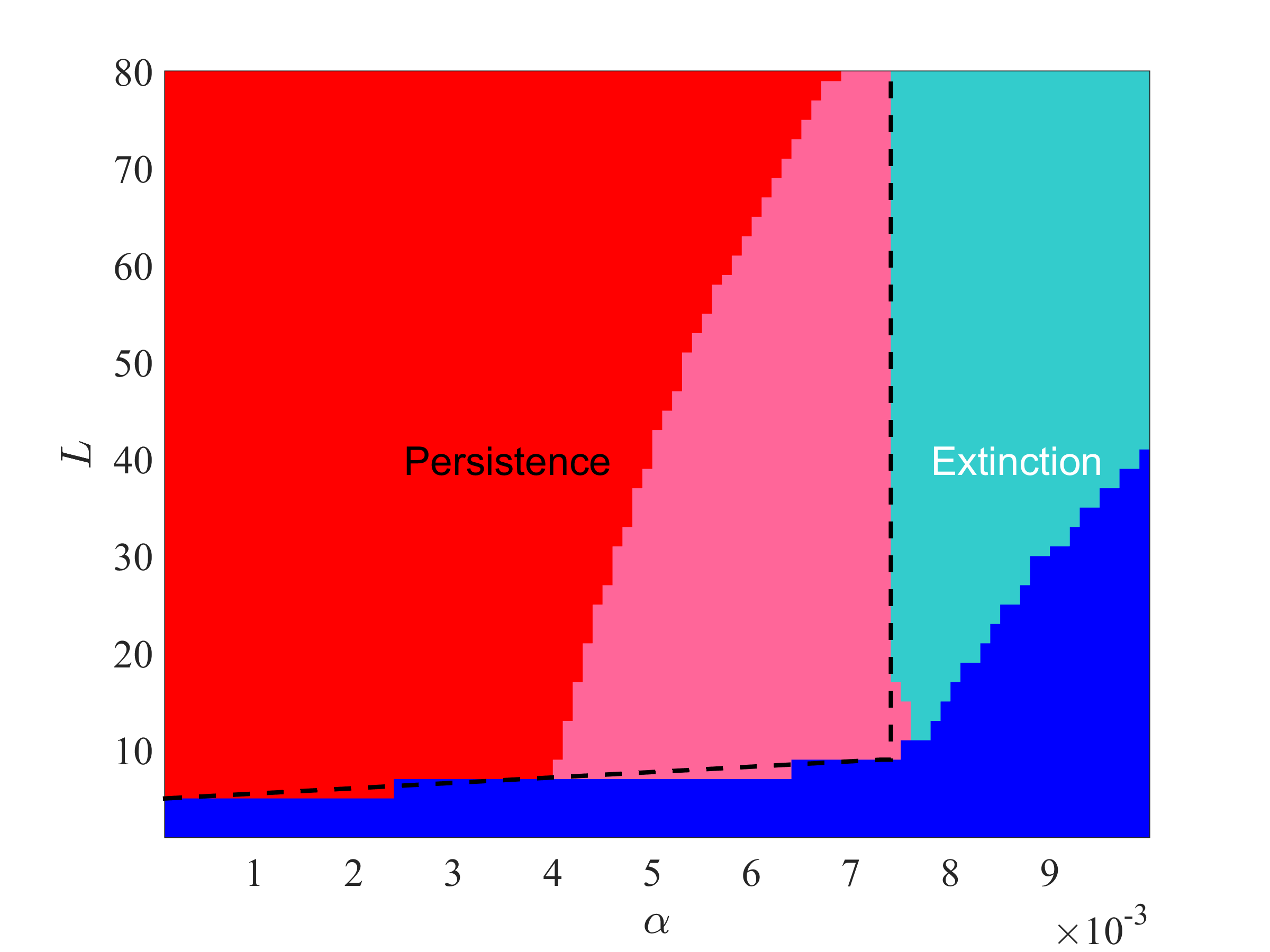}}
\caption{{\bf Persistence at large times, in terms of the mutation parameter $\alpha$ and of the size $L$ of the support of $u_0$}. Panel~(a) corresponds to $\tmin=0.2$ and $\tmax=0.8$, so that $\tmin+\tmax=1$.  Panel~(b) corresponds to $\tmin=0.2$ and $\tmax=0.9$, so that $\tmin+\tmax=1.1>1$. In both cases, the initial condition is given by \eqref{eq:DI_num} and the solution $u(t,x,\theta)$ of \eqref{eq} is evaluated at $T=10^3$. The red region corresponds to persistence; the pink region to probable persistence; the cyan region to probable extinction; the blue region corresponds to extinction. The spatial diffusion coefficient was fixed at $d=1$.}
\label{fig:persistence}
\end{figure}

We also checked the dependence of the persistence/extinction behavior with respect to the value of the dominant trait $\tilde{\theta}$ in the initial population. Namely, we considered initial conditions that are non-uniform with respect to $\theta$:
\begin{equation} \label{eq:DI_num_gauss}
u_0(x,\theta)=C_{\tilde\theta} \, \exp\lp-\frac{(\theta-\tilde{\theta})^2}{2 \sigma^2}\rp \mathds{1}_{(-L/2,L/2)}(x)\; \hbox{ for }(x,\theta)\in I\times(\tmin,\tmax),
\end{equation}
with again $I=(-60,60)$,  $\tilde{\theta}\in \Theta$ and $C_{\tilde\theta}$ such that $\int_{\Theta}u_0(x,\theta)\,\textup{d}\theta=1$ for all $x\in (-L/2,L/2)$ as in the previous example \eqref{eq:DI_num}. We worked here with a single value of the mutation parameter $\alpha=5 \cdot 10^{-3}$ such that both extinction and persistence were observed with initial conditions of the form  \eqref{eq:DI_num}, depending on $L$. We chose here $L=5$ and $\sigma=0.1$. The other parameters were the same as in Fig.~\ref{fig:persistence}b, in particular $\tmin=0.2$ and $\tmax=0.9$. This time, with the same criteria as above, we observed that persistence occurred for $\tilde{\theta}$ below some critical threshold $\tilde{\theta}^\star \approx 0.5$ and extinction occurred for $\tilde{\theta}$ above this threshold. Note that, with $L=5$ and the same other parameter values, extinction occurred with a uniform initial distribution (Fig.~\ref{fig:persistence}b). Thus, keeping the same initial population size, the outcomes depends on the initial distribution of the trait, and persistence becomes more likely when the initial distribution is concentrated around $\tmin$. Video files illustrating the dynamics of $u$ and $\rho$ with initial conditions \eqref{eq:DI_num_gauss} for several values of $\tilde{\theta}$ are available in the Open Science Framework repo\-sitory: \url{https://osf.io/w8nuz/}, together with the corresponding Matlab$^\circledR$  source code.

\section*{Acknowledgements}
The authors are grateful to the reviewers for their insightful, detailed, and much helpful comments.

\bibliographystyle{siam}  

\bibliography{biblio,biblio_lionel_jab_drive}

\end{document}